\newcommand{\ds}{\displaystyle}
\newcommand{\ts}{\textstyle}
\newcommand{\Ord}{\mathcal{O}}
\newcommand{\Nn}{{\mathbb N}}
\newcommand{\Rr}{{\mathbb R}}
\newcommand{\Cc}{{\mathbb C}}
\newcommand{\Zz}{{\mathbb Z}}
\newcommand{\Dd}{\mathbb{D}}
\newcommand{\vph}{\varphi} 
\renewcommand{\Re}{\operatorname{Re}}
\renewcommand{\Im}{\operatorname{Im}}
\newcommand{\vect}[1]{\boldsymbol{#1}}
\DeclareMathOperator{\Iop}{I}
\newcommand{\I}{\boldsymbol{\Iop}}
\newcommand{\Imm}{\I^{(\vect{m})}}
\DeclareMathOperator{\Jop}{J}
\DeclareMathOperator{\Kop}{K}
\newcommand{\J}{\vect{\Jop}}
\newcommand{\Jmm}{\J^{(\vect{m})}}
\newcommand{\Kmm}{\vect{\Kop}^{(\vect{m})}}
\DeclareMathOperator{\LSop}{RD}
\newcommand{\LS}{\boldsymbol{\LSop}}
\newcommand{\LSm}{\LS^{(\vect{m})}}
\newcommand{\Gam}{\vect{\Gamma}^{(\vect{m})}}
\newcommand{\Gamsquare}{\vect{\Gamma}^{(\vect{m})}_{\square}}
\newcommand{\Gamsquarezero}{\vect{\Gamma}^{(\vect{m})}_{\square,0}}
\newcommand{\Gamsquareone}{\vect{\Gamma}^{(\vect{m})}_{\square,1}}
\newcommand{\Gamtriangle}{\vect{\Gamma}^{(\vect{m})}_{\triangle}}
\newcommand{\GamD}{\vect{\Gamma}^{(\vect{m})}_{\Omega}}
\newcommand{\Gamcomp}{\vect{\Upsilon}^{(\vect{m})}}
\newcommand{\Gamsupsquare}{\overline{\vect{\Gamma}}^{(\vect{m})}_{\square}}
\newcommand{\rhodonea}{\vect{\varrho}^{(\vect{m})}_{\alpha}}
\newcommand{\Pim}{\Pi^{(\vect{m})}}
\newcommand{\Pimsquare}{\Pi^{(\vect{m})}_{\square}}
\newcommand{\Pimtriangle}{\Pi^{(\vect{m})}_{\triangle}}
\newcommand{\Pimreal}{\Pi^{(\vect{m})}_{\mathcal{R}}}
\newcommand{\Pimrealsquare}{\Pi^{(\vect{m})}_{\square,\mathcal{R}}}
\newcommand{\Pimrealtriangle}{\Pi^{(\vect{m})}_{\triangle,\mathcal{R}}}
\newcommand{\Pims}{\Pi^{(\vect{m})}_{\mathrm{D}}}
\newcommand{\Pimssquare}{\Pi^{(\vect{m})}_{\square,\mathrm{D}}}
\newcommand{\Dx}[1]{\mathrm{d}#1}
\newcommand{\polbas}{\chi^{(\vect{m})}_{\vect{\gamma}}}
\newcommand{\polbasreal}{\chi^{(\vect{m})}_{\mathcal{R},\vect{\gamma}}}
\newcommand{\polbascont}{{X}_{\vect{\gamma}}}
\newcommand{\polbascontreal}{X_{\mathcal{R},\vect{\gamma}}}
\newcommand{\funpol}{\mathcal{L}}
\newcommand{\funpols}{\mathcal{L}_{\mathrm{D}}}
\newcommand{\ff}{\mathscr{f}}
\newtheorem{theorem}{Theorem}
\newtheorem{proposition}[theorem]{Proposition}
\newtheorem{corollary}[theorem]{Corollary}
\newdefinition{definition}{Definition}
\newdefinition{remark}{Remark}
\newdefinition{example}{Example}
\newproof{proof}{Proof}
\newcommand{\proofofref}{}
\newproof{zproofof}{Proof of \proofofref}
\newenvironment{proofof}[1]
 {\renewcommand{\proofofref}{#1}\zproofof}
 {\endzproofof}
\begin{document}

\begin{frontmatter}
\title{Rhodonea curves as sampling trajectories for spectral interpolation on the unit disk}

\author{Wolfgang Erb}

\address{ University of Padova \\
          Department of Mathematics "Tullio Levi-Civita" \\
          Via Trieste 63, 35121 Padova, Italy} 
	
\ead{erb@math.unipd.it}

\date{\today}

\begin{abstract}
Rhodonea curves are classical planar curves in the unit disk with the characteristic shape of a rose. 
In this work, we use point samples along such rose curves as node sets for a novel spectral interpolation scheme on the disk. By deriving a discrete orthogonality structure on these rhodonea nodes, we will show that the spectral interpolation problem is unisolvent. 
The underlying interpolation space is generated by a parity-modified Chebyshev-Fourier basis on the disk. This allows us to compute the spectral interpolant in an efficient way. Properties as continuity, convergence and numerical condition of the scheme depend on the spectral structure of the interpolation space. For rectangular spectral index sets, we show that the interpolant 
is continuous at the center, the Lebesgue constant grows logarithmically and that the scheme converges fast if the function under consideration is smooth. 
Finally, we derive a Clenshaw-Curtis quadrature rule using function evaluations at the rhodonea nodes and conduct some numerical experiments to compare different parameters of the scheme.
\end{abstract}

\begin{keyword}
Spectral interpolation on the disk \sep rhodonea curves \sep intersection and boundary nodes of rhodonea curves \sep parity-modified Chebyshev-Fourier series \sep Clenshaw-Curtis quadrature on the disk \sep numerical condition and convergence of interpolation schemes 
\MSC[2010]{41A05,42A16,65D05,65T50}
\end{keyword}

\end{frontmatter}

\section{Introduction} \label{sec:introduction}

Rose curves are classical planar curves in a disk that have the shape of a patelled rose. Guido Grandi, studying these curves profoundly in the beginning of the 18th century \cite{Grandi1728}, 
used the corresponding greek name for them: rhodonea curves. These algebraic curves have a particular simple parametric and polar form. This makes them to interesting trajectories for data sampling in imaging. Examples of scanning systems using rhodonea curves are, for instance, Magnetic Particle Imaging \cite{Knopp2009PhysMedBio,Knopp2017,Szwargulski2015b} and laser scanners based on rotating Risley prisms
\cite{DumaSchitea2018,Lu2014}.
Further, rose curves are also very popular in calculus text books to teach parametrization and integration in polar coordinates. 

In this work, we study rhodonea curves as sampling trajectories for new and promising sets of interpolation nodes on the unit disk. If the samples are taken in a time-equidistant way along the curve, the nodes form a pair of interlacing polar grids. This structure of the so called rhodonea 
nodes together with an accordingly chosen basis system allows us to construct a simple and efficient spectral interpolation and quadrature scheme on the disk. 

As a suitable basis system for the spectral interpolation on the rhodonea nodes we use a parity-modified Chebyshev-Fourier basis. Among other well-known basis systems 
as the Logan-Shepp ridge polynomials or the Zernike polynomials, the Chebyshev-Fourier basis is a very popular choice for spectral methods on the unit disk 
\cite{Boyd2000,BoydYu2011,Fornberg1995,Fornberg1996,Shen2011,Trefethen2000,TownsendWilberWright2017}. One main advantage of the Chebyshev-Fourier basis is the possibility to compute the interpolating function very efficiently using fast Fourier methods. In relation to other systems, this basis system performs however not so well at the center of the unit disk. For a detailed comparison of the different spectral methods on the unit disk we refer to the profound discussion in \cite{BoydYu2011}.

\vspace{2mm}

{\noindent \bfseries 1.1. Main contributions.} 
\begin{itemize}[nosep,leftmargin=1em,labelwidth=*,align=left]
\item[-]\emph{Characterization of the rhodonea interpolation nodes.} We provide new descriptions of the intersection and boundary points of the rhodonea curves and show how they can be used as nodes for a 
spectral interpolation scheme on the disk.
\item[-]\emph{Unisolvence of interpolation scheme on rhodonea nodes.} We will prove the unisolvence of the spectral interpolation problem on the rhodonea nodes. The interpolation spaces are spanned by
a general spectral set of Chebyshev-Fourier basis functions.
\item[-]\emph{Efficient implementation.} We show that the spectral interpolation on the rhodonea nodes can be performed efficiently using a two-dimensional fast Fourier transform. 
\item[-]\emph{Numerical condition and convergence analysis.} The main interpolation space considered in this work is based on a rectangular spectral index set. For this space we show that the numerical condition of the interpolation is growing only logarithmically in the number of nodes and that the scheme converges fast if the interpolated function is smooth. 
\item[-]\emph{Continuity and quadrature.} For the rectangular spectral index set we can guarantee that the interpolant is continuous at the center of the disk. Further, we show how the interpolation scheme can be used to define a Clenshaw-Curtis quadrature rule on the disk. 
\end{itemize} 

\vspace{2mm}

{\noindent \bfseries 1.2. Comparison to existing work.} 

\noindent \emph{Comparison to standard tensor-product schemes on the disk.} 
Spectral methods based on a Chebyshev-Fourier basis can be implemented efficiently by fast Fourier algorithms. In many common implementations, the calculation of the coefficients in the Chebyshev-Fourier series is performed on a tensor-product polar grid \cite{BoydYu2011,Fornberg1995,Shen2011,Trefethen2000,TownsendWilberWright2017}. The rhodonea nodes used in this work allow a similar computation of the Chebyshev-Fourier coefficients with equivalent efficiency and convergence rates. Compared to the tensor-product case, the new scheme provides the following additional features:
\begin{itemize}[nosep,leftmargin=1em,labelwidth=*,align=left]
\item[-] The data can be collected by sampling along one or several rhodonea curves. This is particularly interesting for the applications in which rose curves are used as scanning trajectories. 
In this perspective, rhodonea nodes can be interpreted as polar analogs of rank-$1$ trigonometric lattices \cite{KKP2012,KPV2015} or rank-$1$ Chebyshev lattices \cite{CoolsPoppe2011,PottsVolkmer2015}.
\item[-] The presented interpolation scheme on the rhodonea curve is more flexible in terms of the underlying interpolation space. The unisolvence of the interpolation problem is guaranteed for a large class of spectral index sets. This is a polar version of a bivariate result in which a similar flexibility is known for polynomial interpolation on
interlacing grids \cite{Floater2017}.  
\end{itemize}
\noindent\emph{Complementation of work on Lissajous nodes.} Rhodonea curves can be regarded as polar counterparts of bivariate Lissajous curves on the square $[-1,1]^2$ and of spherical Lissajous curves. This article is a continuation of the work on polynomial interpolation on Lissajous curves \cite{DenckerErb2017a,DenckerErb2015a,Erb2015,ErbKaethnerAhlborgBuzug2015} and on spherical Lissajous nodes \cite{ErbSphere2017} and extends it to the polar setting. The differences between
the actual work on the disk and the previous works on the hypercube and the unit sphere arise naturally from the differing geometries. In all three settings, the generating curves and the interpolation nodes have own characteristic properties and the interpolation spaces have to be set up according to the given symmetries. Nevertheless, the core ideas in all three setups are similar and many of the ideas used for Lissajous curves can be carried over to the setting of rhodonea curves. In particular, as for multivariate Lissajous-Chebyshev points in the hypercube \cite{DenckerErb2017a,DenckerErb2015a}, a main step in the proof of the quadrature and interpolation formulas is a discrete orthogonality structure linked to the structure of the rhodonea nodes. Compared to previous works, a major progress in this article is the larger flexibility in the choice of the interpolation space. 
 
\vspace{1mm}

{\noindent \bfseries 1.3. Organization.} 
After a short introduction, we provide three different characterizations of the rhodonea nodes: 1) by time equidistant samples along the rhodonea curve (Section \ref{sec:rhodonea}), 2) in terms of a union of two interlacing polar grids (Section \ref{sec:nodesphere}), and 3) by using the algebraic description of the rhodonea varieties (Section \ref{sec:rhodvar}). 

The technical background for the interpolation results in 
form of a discrete orthogonal structure and spectral index sets is given in Section \ref{1507091240}. The main results providing the unisolvence of the spectral interpolation 
on the rhodonea nodes are proven in Section \ref{sec:interpolation}. 

In Section \ref{sec:implementation}, we describe an efficient implementation of the interpolation scheme using the fast Fourier transform. We conclude this work with Section \ref{sec:convergence} and a mathematical description of various properties of the interpolation scheme including: 1) the behavior of the interpolant at the center of the disk, 2) the numerical condition of the scheme, 3) convergence rates, and 4) the application to a Clenshaw-Curtis quadrature rule on the disk. The proofs of all results are collected in Section \ref{sec:proof}. 

\section{Rhodonea curves on the unit disk} \label{sec:rhodonea}

\begin{figure}[htb]
 	\centering
 	\subfigure[\hspace*{1em} The curve $\vect{\rho}^{(2,3)}_{0}$ and the nodes $\LS^{(2,3)}_{0}$.
 	]{\includegraphics[scale=0.9]{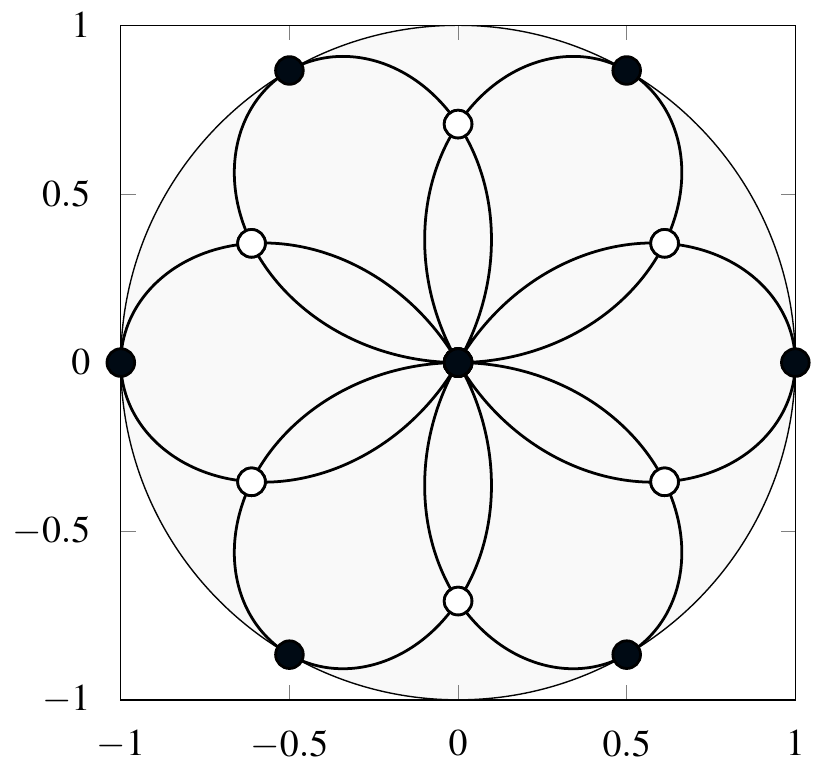}}
 	\hfill	
 	\subfigure[\hspace*{1em} The curve $\vect{\rho}^{(5,3)}_{0}$ and the nodes $\LS^{(5,3)}_{0}$.
 	]{\includegraphics[scale=0.9]{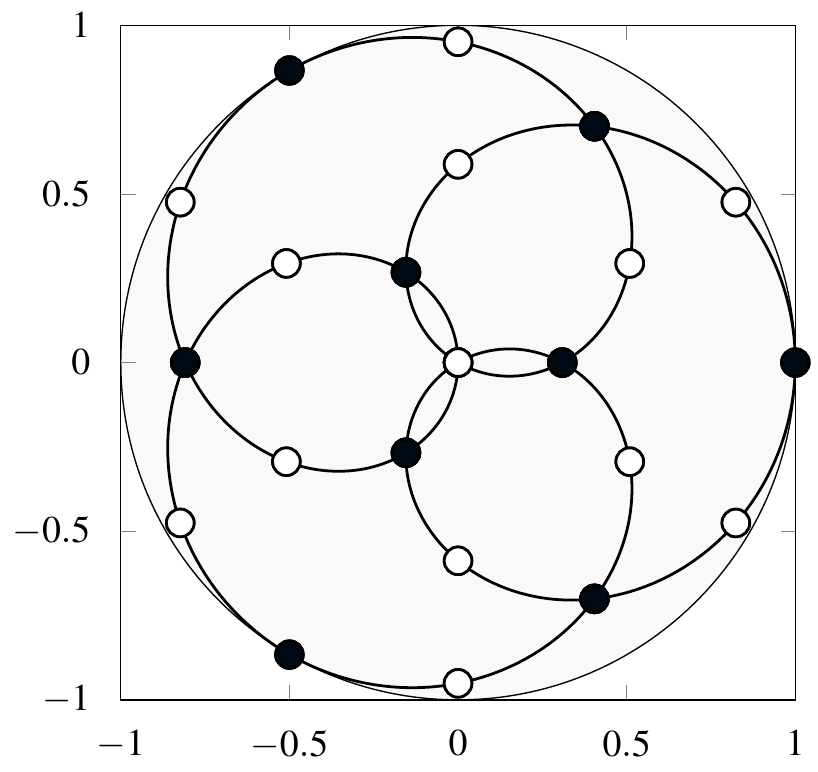}}
   	\caption{Two rhodonea curves $\vect{\varrho}^{(\vect{m})}_0$ and the corresponding nodes $\LSm_0$ as defined in \eqref{201509161237} and \eqref{1709171731}. 
    If $m_1$ and $m_2$ are relatively prime and $m_1 + m_2$ is odd, the set $\LSm_0$ is the union of intersection and boundary points of $\vect{\varrho}^{(\vect{m})}_0$ as described 
   	in Corollary \ref{cor-1} i). In this case, the black and white dots indicate the two interlacing grids determining $\LSm_0$, see Example \ref{ex:1}. If $m_1$ and $m_2$ are both odd only the black samples and the center $(0,0)$ are possible intersection or boundary points of the curve. 
   	} \label{fig:LS-1}
 \end{figure}

\paragraph{\bf 2.1. General properties} For a frequency vector $\vect{m} = (m_1, m_2) \in\Nn^{2}$ and a rotation parameter $\alpha \in \Rr$, we define the \emph{rhodonea curves} in parametric form as
\begin{equation} \label{201509161237}
\rhodonea(t) = \Big( \cos(m_2 t) \cos (m_1 t - \alpha \pi), \ \cos(m_2 t) \sin (m_1 t - \ts \alpha \pi) \Big), \quad t \in \mathbb{R}.
\end{equation}
The rhodonea curve $\rhodonea$ is contained in the unit disk $\Dd = \{\vect{x} \in \Rr^2 \ : \ |\vect{x}| \leq 1\}$. Because of its characteristic shape of a patelled rose, these curves are also referred to as \emph{rose curves} or 
\emph{roses of Grandi}, after the monk and mathematician Guido Grandi who studied them intensively in \cite{Grandi1728}. Two typical examples of rose curves are illustrated in Figure \ref{fig:LS-1}.

The frequency parameters $m_1$ and $m_2$ in the curve $\rhodonea$ determine a superposition of 
a radial and an angular harmonic motion. For this reason, rose curves can also be regarded 
as polar variants of bivariate Lissajous curves \cite{DenckerErb2017a,DenckerErb2015a,Erb2015,ErbKaethnerAhlborgBuzug2015}. If the numbers
$m_1$ and $m_2$ are relatively prime, the minimal period $P$ of $\rhodonea$ is given by $P=2\pi$ if $m_1+m_2$ is odd, and $P = \pi$ if $m_1+m_2$ is even (see Proposition \ref{prop-11}). Depending on these two cases, the properties of the curve $\rhodonea$ vary slightly and we will have to distinguish them at several occasions.  

For general $\vect{m} \in \Nn^2$ and $g = \mathrm{gcd}(\vect{m}) \geq 1$,
we can write $\rhodonea(t) = \vect{\varrho}^{(\vect{m}/g)}_{\alpha}( g t)$. In this case,
the minimal period of $\rhodonea$ is given by $P/g$. In particular, all properties of a rose $\rhodonea$ with general $\vect{m} \in \Nn^2$ can be obtained from the 
curve $\vect{\varrho}^{(\vect{m}/g)}_{\alpha}$ with the relatively prime parameter $\vect{m}/g$. When analyzing the properties of a single rose curve it is therefore enough to restrict the considerations to relatively prime frequency numbers $m_1$ and $m_2$. However, if more than one rhodonea curve is used to generate the interpolation nodes, also the general case will play an important role later on.

\paragraph{\bf 2.2. The self-intersection points of $\rhodonea$} To extract all self-intersection points of the curve $\rhodonea$, we consider for $t \in [0,2\pi)$
the sets $\mathcal{S}^{(\vect{m})}(t) = \{ s \in [0,2\pi): \ \rhodonea(s) = \rhodonea(t) \}$ and the sampling points
\begin{align} t^{(\vect{m})}_{l} &= \frac{l \pi}{2 m_1 m_2}, \quad l \in \{0,1, \ldots, 4m_1m_2-1\}. \label{eq-samples1}
\end{align}

\begin{proposition} \label{prop-11} Let $m_1$ and $m_2$ be relatively prime numbers.\\[-2mm]

\noindent If $m_1+m_2$ is odd, the minimal period of $\rhodonea$ is $2 \pi$ and
 \[  \begin{array}{lll}
      (i) & \# \mathcal{S}^{(\vect{m})}(t) = 2 m_2  & \text{if}\quad t \in \{\,t^{(\vect{m})}_{l}\,| \ l\in \{0,\ldots,4m_1m_2-1\}, \ l \equiv m_1 \mod 2 m_1 \},\\
      (ii) & \# \mathcal{S}^{(\vect{m})}(t) = 2 & \text{if}\quad t \in \{\,t^{(\vect{m})}_{l}\,| \ l\in \{0,\ldots,4m_1m_2-1\}, \ l \not\equiv 0 \mod m_1 \}, \\
      (iii) & \# \mathcal{S}^{(\vect{m})}(t) = 1 & \text{for all other $t \in [0,2\pi)$.}
       \end{array}
\]
If $m_1+m_2$ is even, then the minimal period of $\rhodonea$ is $\pi$ and
 \[  \begin{array}{lll}
      (i)' & \# \mathcal{S}^{(\vect{m})}(t) = 2 m_2  & \text{if}\quad t \in \{\,t^{(\vect{m})}_{l}\,| \ l\in \{0,\ldots,4m_1m_2-1\}, \ l \equiv m_1 \mod 2 m_1 \}, \\
      (ii)' & \# \mathcal{S}^{(\vect{m})}(t) = 4 & \text{if}\quad t \in \{\,t^{(\vect{m})}_{2l}\,| \ l\in \{0,\ldots,2m_1m_2-1\}, \ l \not\equiv 0 \mod m_1 \}, \\
      (iii)' & \# \mathcal{S}^{(\vect{m})}(t) = 2 & \text{for all other $t \in [0,2\pi)$.}
      \end{array}
\]
\end{proposition}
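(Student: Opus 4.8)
The plan is to recast $\rhodonea$ in radius--angle form, writing $\rhodonea(t) = r(t)\,(\cos\theta(t),\sin\theta(t))$ with $r(t) = \cos(m_2 t)$ and $\theta(t) = m_1 t - \alpha\pi$. Equating $\rhodonea(s) = \rhodonea(t)$ for two non-central points (those with $r\neq 0$) forces either $r(s)=r(t)$ with $\theta(s)\equiv\theta(t)\pmod{2\pi}$, or $r(s)=-r(t)$ with $\theta(s)\equiv\theta(t)+\pi\pmod{2\pi}$. Unfolding $r(s)=\pm r(t)$ through $\cos(m_2 s)=\pm\cos(m_2 t)$ and the angle condition through $m_1(s-t)\equiv 0$ or $\pi\pmod{2\pi}$, I obtain exactly four linear systems in $s\pm t$ modulo $2\pi$: the trivial one (giving $s=t$); a reflection type $m_1(s-t)\equiv 0,\ m_2(s+t)\equiv 0$; a translation type $m_1(s-t)\equiv\pi,\ m_2(s-t)\equiv\pi$; and a mixed type $m_1(s-t)\equiv\pi,\ m_2(s+t)\equiv\pi$. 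Points with $r=0$, i.e.\ $\cos(m_2 t)=0$, all collapse to the center and must be handled separately.

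First I would settle the minimal period. Multiplying the two congruences of the translation type by $m_2$ and $m_1$ shows it is solvable for every $t$ iff $m_1\equiv m_2\pmod 2$, hence by coprimality iff both are odd, and then $s=t+\pi$. This gives $\rhodonea(t+\pi)=\rhodonea(t)$ precisely when $m_1+m_2$ is even, so the period is $\pi$. When $m_1+m_2$ is odd, the winding point $\gamma(t)=(m_1 t-\alpha\pi,\,m_2 t)\bmod 2\pi$ is injective on $[0,2\pi)$ by coprimality, matching the $t$-linear parts restricts any period-inducing identification to the trivial or translation type, and neither shortens the period; thus the minimal period is $2\pi$.

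Next I would count, for each fixed $t$, the number of $s\in[0,2\pi)$ produced by the remaining systems, using B\'ezout together with $\gcd(m_1,m_2)=1$. The reflection type is solvable exactly when $2t\in\frac{\pi}{m_1 m_2}\Zz$, i.e.\ when $t=t^{(\vect{m})}_{l}$ with $l$ even, and a CRT count with the coprime moduli $2\pi/m_1$ and $2\pi/m_2$ shows the solution $s$ is then unique in $[0,2\pi)$. The mixed type is solvable exactly for $t=t^{(\vect{m})}_{l}$ with $l\equiv m_1+m_2\pmod 2$, again with a unique $s$. In particular, self-intersections occur only at the sampling nodes $t^{(\vect{m})}_{l}$, which establishes the completeness part of the statement.

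The main obstacle is the bookkeeping at the degenerate nodes, where the generic count drops. I would single out the center nodes $\cos(m_2 t)=0$, i.e.\ $l\equiv m_1\pmod{2m_1}$, where the angle is irrelevant and all $2m_2$ parameters with $\cos(m_2 s)=0$ coincide, giving $\#\mathcal S^{(\vect m)}(t)=2m_2$; and the boundary nodes $\cos(m_2 t)=\pm 1$, i.e.\ $l\equiv 0\pmod{2m_1}$, where the reflection-type solution collapses onto $s=t$ (and, in the both-odd case, the translation- and mixed-type solutions coincide), so no new preimage is gained. Away from these, the applicable systems contribute pairwise distinct $s$, since the four identifications act freely on a non-central, non-boundary $\gamma(t)$. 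Assembling the contributions: for $m_1+m_2$ odd exactly one of the reflection/mixed systems applies at each non-degenerate node according to the parity of $l$, yielding $\#\mathcal S^{(\vect m)}(t)=2$ for $l\not\equiv 0\pmod{m_1}$ and $1$ otherwise; for $m_1+m_2$ even the translation type always applies while both reflection and mixed types apply at even $l$, yielding $\#\mathcal S^{(\vect m)}(t)=4$ at the non-degenerate even nodes and $2$ otherwise. This reproduces cases $(i)$--$(iii)$ and $(i)'$--$(iii)'$.
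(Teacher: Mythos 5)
Your proposal is correct and takes essentially the same route as the paper: your four systems (trivial, reflection, translation, mixed) are exactly the paper's cases $u,v\in\{-1,1\}$ in \eqref{170210-2}, resolved by the same B\'ezout/Chinese-remainder arithmetic, with the same special treatment of the center nodes ($l \equiv m_1 \bmod 2m_1$) and boundary nodes ($l \equiv 0 \bmod 2m_1$) and the same doubling mechanism $s \mapsto s+\pi$ in the case $m_1+m_2$ even. One cosmetic slip: the reflection-type solvability condition should read $t \in \frac{\pi}{m_1 m_2}\Zz$ (equivalently $2m_1m_2\,t \in 2\pi\Zz$) rather than $2t \in \frac{\pi}{m_1 m_2}\Zz$ --- the conclusion you draw from it, namely $t = t^{(\vect{m})}_{l}$ with $l$ even, is the correct one.
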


We can extract a series of properties from this result. The nodes $\rhodonea(t)$ in $(i)$ and $(i)'$ with $\# \mathcal{S}^{(\vect{m})}(t) = 2 m_2$ correspond to the center $(0,0)$ of the unit disk $\Dd$. As $t$ varies from $0$ to $P$, the center is traversed $2m_2$ 
times in the case that $m_1+m_2$ is odd and $m_2$ times if $m_1+m_2$ is even. All the points
$\rhodonea(t)$ in $(ii)$ and $(ii)'$ are doubly traversed in one period $P$. Therefore, if $m_1+m_2$ is odd, Proposition \ref{prop-11} ensures that the set
\begin{equation} \label{1709171731}
 \LSm_{\alpha} = \left\{\,\rhodonea(t^{(\vect{m})}_{l})\,|\,  l\in \{0,\ldots,4m_1m_2-1\} \,\right\}
\end{equation}
contains all self-intersection points of the curve $\rhodonea$. The additional nodes
$\rhodonea(t^{(\vect{m})}_{l})$ with $l \equiv 0 \mod 2 m_1$ describe precisely the set of all points at which the curve $\rhodonea$ touches the boundary of the unit disk $\Dd$ (i.e. the unit circle). If $m_1+m_2$ is even, 
the set $\LSm_{\alpha}$ is larger than the union of self-intersection and boundary points of $\rhodonea$. Nevertheless, also in this case the set $\LSm_{\alpha}$
will play an important role in our considerations. We summarize all important properties of the rhodonea curves in the following Corollary \ref{cor-1}.

\begin{corollary} \label{cor-1}
Let $m_1$ and $m_2$ be relatively prime natural numbers. 
\begin{itemize}
\item[i)] If $m_1 + m_2$ is odd, then $\LSm_\alpha$ is the union of all self-intersection and all boundary points of the closed curve
$\rhodonea$. $\LSm_\alpha$ contains
$2 m_1 m_2 + 1$ points in $\Dd$. It includes the center $(0,0)$ that is traversed $2 m_2$ times in one period $P = 2\pi$, $2 (m_1-1) m_2$ ordinary double points distinct
from $(0,0)$ and $2m_2$ points on the boundary of $\Dd$.
\item[ii)] If $m_1 + m_2$ is even, the curve $\rhodonea$ contains $\frac12(m_1-1) m_2$ ordinary double points distinct from $(0,0)$ and $m_2$ points on the boundary of $\Dd$. The
center $(0,0)$ is traversed $m_2$ times in one period $P = \pi$. 
\end{itemize}
\end{corollary}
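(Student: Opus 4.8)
The plan is to obtain the corollary as a bookkeeping consequence of Proposition \ref{prop-11} together with one elementary radial identity. The key observation is that
\[
 |\rhodonea(t)| = |\cos(m_2 t)|,
\]
because the angular factors $\cos(m_1 t - \alpha\pi)$ and $\sin(m_1 t - \alpha\pi)$ contribute $\cos^2 + \sin^2 = 1$ to the squared Euclidean norm. Hence $\rhodonea(t)$ is the center $(0,0)$ exactly when $\cos(m_2 t) = 0$, it lies on the boundary of $\Dd$ exactly when $\cos(m_2 t) = \pm 1$, and it is an interior point otherwise. Evaluated at the sampling times $t^{(\vect{m})}_{l} = \frac{l\pi}{2m_1m_2}$, the center is reached precisely for $l \equiv m_1 \bmod 2m_1$ and the boundary precisely for $l \equiv 0 \bmod 2m_1$; these coincide with the index class of Proposition \ref{prop-11}, cases $(i)/(i)'$, and with the class that isolates the boundary nodes. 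The second ingredient is that every self-intersection occurs at a sampling time: any $t$ which is not a sampling time falls under case $(iii)$ (resp. $(iii)'$) and therefore realizes the smallest admissible fibre, so it is a simple point of the curve.

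For the odd case (part i)) I would then partition the index set $\{0, \ldots, 4m_1m_2-1\}$ following Proposition \ref{prop-11}. The class $l \equiv m_1 \bmod 2m_1$, of cardinality $2m_2$, maps onto the single point $(0,0)$ and records its $2m_2$ traversals. The boundary class $l \equiv 0 \bmod 2m_1$, also of cardinality $2m_2$, is neither of the form in $(i)$ nor in $(ii)$, hence lies in case $(iii)$ with fibre size $1$, so it maps bijectively onto $2m_2$ distinct boundary points. The remaining class $l \not\equiv 0 \bmod m_1$, of cardinality $4m_2(m_1-1)$, consists of case-$(ii)$ nodes with fibre size $2$ and therefore maps two-to-one onto $2m_2(m_1-1)$ ordinary double points. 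Summing gives $1 + 2m_2(m_1-1) + 2m_2 = 2m_1m_2 + 1$ distinct points in $\Dd$; since every self-intersection and every boundary point is among them, $\LSm_\alpha$ is exactly their union. The division by the fibre size is legitimate because every element of a fibre over a sampled crossing is itself a self-intersection, hence again a sampling time, so the counting fibres of Proposition \ref{prop-11} coincide with the sampled preimages.

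For the even case (part ii)) the identical bookkeeping applies, with the single modification that the minimal period is now $\pi$, whereas Proposition \ref{prop-11} records fibre sizes over $[0,2\pi)$, i.e. over two minimal periods. I would therefore count geometric points within one period by restricting to sampling indices in $\{0, \ldots, 2m_1m_2-1\}$ and halving the fibre data. A case-$(ii)'$ crossing has fibre $4$ over $[0,2\pi)$, whose four preimages split into two pairs $\{s, s+\pi\}$, i.e. two preimages per period, giving an ordinary double point; counting the admissible even indices $2l$ with $l \not\equiv 0 \bmod m_1$ yields $\frac12(m_1-1)m_2$ such nodes. The boundary indices $l \equiv 0 \bmod 2m_1$ lie in case $(iii)'$ and contribute $m_2$ distinct boundary points, while the center class $l \equiv m_1 \bmod 2m_1$ accounts for $m_2$ traversals of $(0,0)$ in the period $P = \pi$.

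The step I expect to be most delicate is justifying the qualifier \emph{ordinary} in ``ordinary double point'', that is, that the two branches meeting at such a node cross transversally rather than being tangent. Proposition \ref{prop-11} delivers only the cardinality of the fibre, so I would additionally verify that the tangent directions of $\rhodonea$ at the two preimages $s_1 \neq s_2$ of a double point are distinct. Using the explicit symmetric form of these preimage times, this reduces to a short trigonometric computation showing that the two velocity vectors are linearly independent away from the center and the boundary. A secondary, purely combinatorial subtlety is the consistent handling of the period doubling in the even case, where the $[0,2\pi)$-fibres of Proposition \ref{prop-11} must be reduced by the period factor before geometric multiplicities within the minimal period $\pi$ are counted.
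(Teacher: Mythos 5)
Your proposal is correct and follows essentially the same route as the paper: there, too, Corollary \ref{cor-1} is obtained as pure bookkeeping from Proposition \ref{prop-11}, using the radial identity $|\rhodonea(t)| = |\cos(m_2 t)|$ to identify the center class $l \equiv m_1 \bmod 2m_1$ and the boundary class $l \equiv 0 \bmod 2m_1$, and dividing the remaining sampling indices by the fibre sizes (with the fibres halved per minimal period in the even case, exactly as you do). The one gap you flag---transversality behind the qualifier ``ordinary''---is likewise left unverified in the paper, which asserts it with reference to the classical literature (see the remark citing Himstedt and Loria), so your proposed tangent-vector check would only make the argument more self-contained.
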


\begin{remark} Various properties of the rhodonea curves described in this section are known for a long time. The number and type of intersection points are, for instance, originally derived in \cite{Himstedt1888}. A general historic overview with a lot of additional features of 
rhodonea curves can be found in \cite[p. 297-306]{Loria1902}. Further graphical illustrations
of rhodonea curves are given in \cite{Gorjanc2010}. The novel aspects of this article are the different characterizations of the rhodonea nodes $\LSm_\alpha$. In Corollary \ref{cor-1} i) we could describe the union of intersection and boundary points as the set $\LSm_\alpha$ of
time-equidistant samples along the rhodonea curve. Further characterizations of $\LSm_\alpha$ are now obtained in the next part.
\end{remark}

\section{The interpolation nodes generated by rhodonea curves} \label{sec:nodesphere}

\begin{figure}[htb]
 	\centering
 	\subfigure[The nodal index set $\I^{(5,3)}$. The black indices at $i_1 = 0$ describe the nodes 
 	of $\LS^{(5,3)}$ at the boundary of $\Dd$. The white indices at $i_1 = 5$ are mapped onto
 	the center $(0,0)$.  
 	]{\includegraphics[scale=0.9]{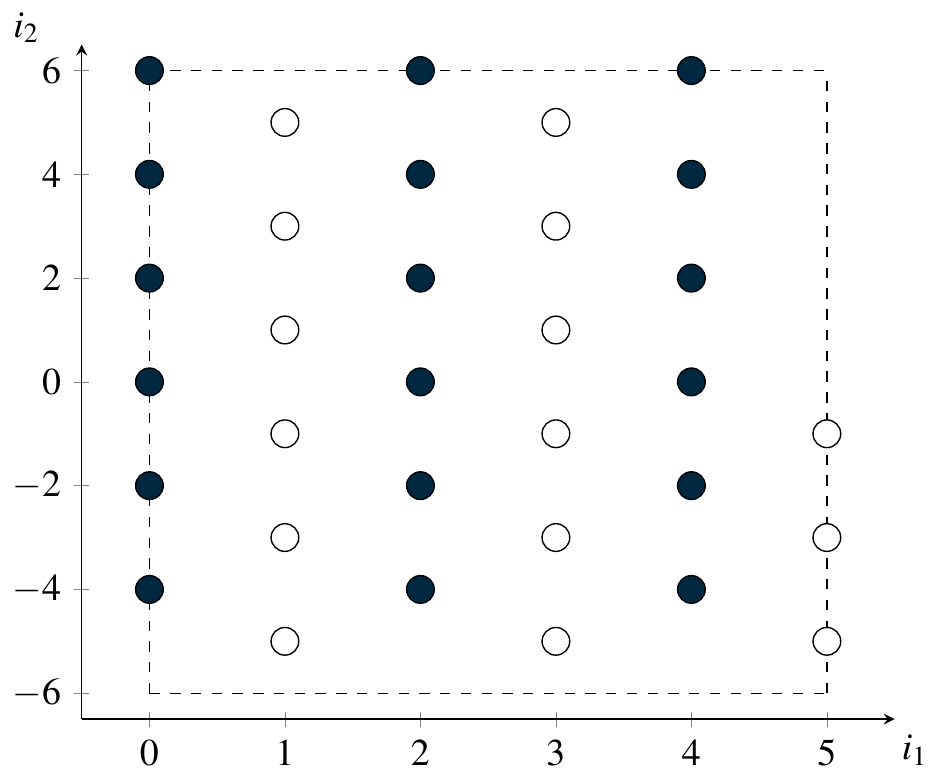}}
 	\hfill	
 	\subfigure[The rhodonea nodes $\LS^{(5,3)}$ and the variety $ \mathcal{R}^{(5,3)}=\bigcup_{\rho = 0}^{1 }\vect{\rho}^{(5,3)}_{\rho/3}$. The red curve displays the curve $\vect{\rho}^{(5,3)}_{0}$. We have $\LS^{(5,3)}_0 \subset \LS^{(5,3)}$.
 	]{\includegraphics[scale=0.9]{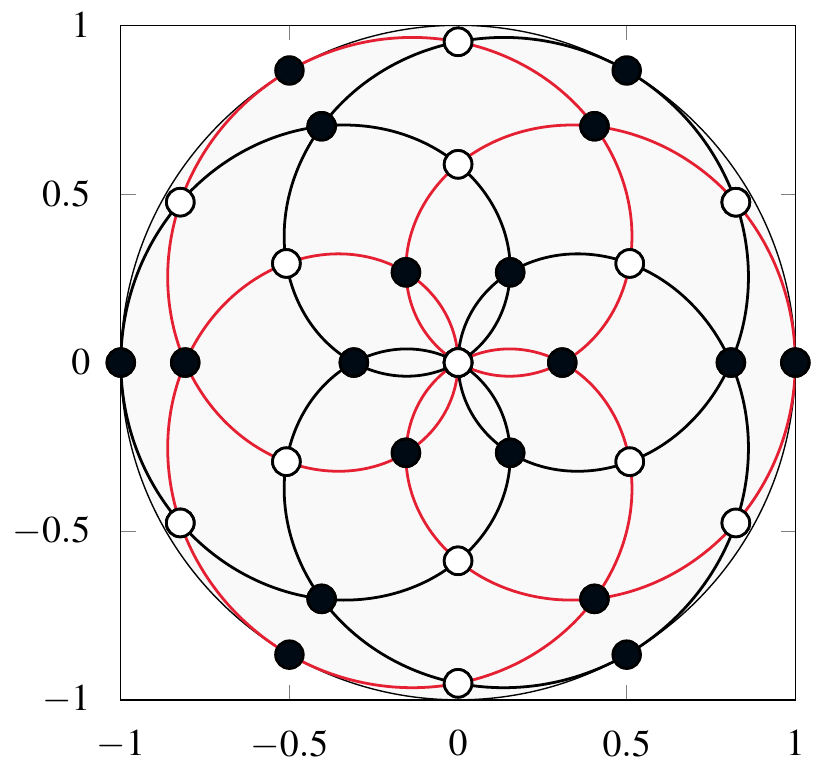}}
   	\caption{Illustration of the nodal index set $\I^{(5,3)}$, the rhodonea nodes 
   	$\LS^{(5,3)}$ and rhodonea variety $\mathcal{R}^{(5,3)}$. The black and white nodes form two
   	interlacing subgrids and are determined by the subsets $\I^{(5,3)}_0$ and $\I^{(5,3)}_1$, respectively. Compare also with Figure \ref{fig:LS-1} b) where an illustration of the curve $\vect{\rho}^{(5,3)}_{0}$ and $\LS^{(5,3)}_0$ is given.  
   	} \label{fig:LS-2a}
\end{figure}

The nodes $\LSm_{\alpha}$ of the rhodonea curve $\rhodonea$ given in Corollary \ref{cor-1} i) 
can be described as the union of two interlacing rectangular grids in polar coordinates.
Without restriction to generality, we set $\alpha = 0$ and consider the nodes $\LSm_{0}$. 
Further, we will use general frequencies $\vect{m} = (m_1, m_2) \in\Nn^{2}$ for this second description. 
If $m_1$ and $m_2$ are not relatively prime, the so obtained nodes contain $\LSm_{0}$ as a subset and can be interpreted as sampling nodes of more than one rhodonea curve. Similarly, if $m_1+m_2$ is even the given description will contain the nodes $\LSm_{0}$ as a subset. First examples are given in Figure \ref{fig:LS-2a} and \ref{fig:LS-2b}. 

\begin{figure}[htb]
 	\centering
 	\subfigure[The nodal index set $\I^{(4,4)}$. The indices at $i_1 = 0$ describe the nodes 
 	of $\LS^{(4,4)}$ at the boundary of $\Dd$. The indices at $i_1 = 4$ are mapped onto
 	the center $(0,0)$.  
 	]{\includegraphics[scale=0.9]{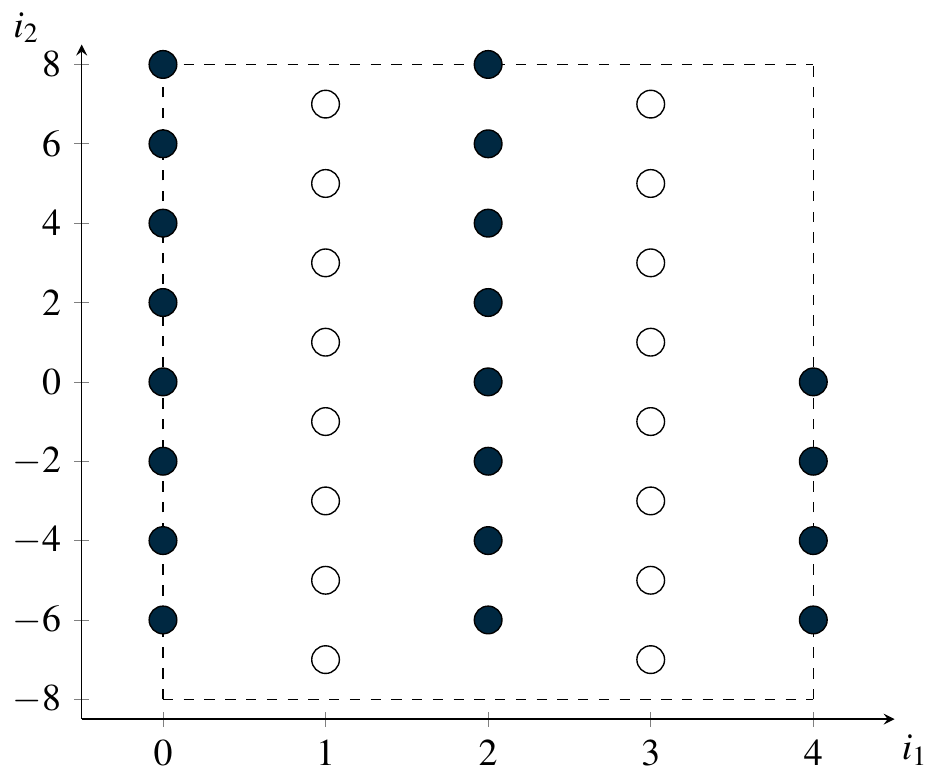}}
 	\hfill	
 	\subfigure[The rhodonea nodes $\LS^{(4,4)}$ and the variety $ \mathcal{R}^{(4,4)} = \bigcup_{\rho = 0}^{7 }\vect{\rho}^{(4,4)}_{\rho/4}$.
 	The red curve illustrates the circle $\vect{\rho}^{(4,4)}_{0}$. We have $\LS^{(4,4)}_0 \subset \LS^{(4,4)}$.
 	]{\includegraphics[scale=0.9]{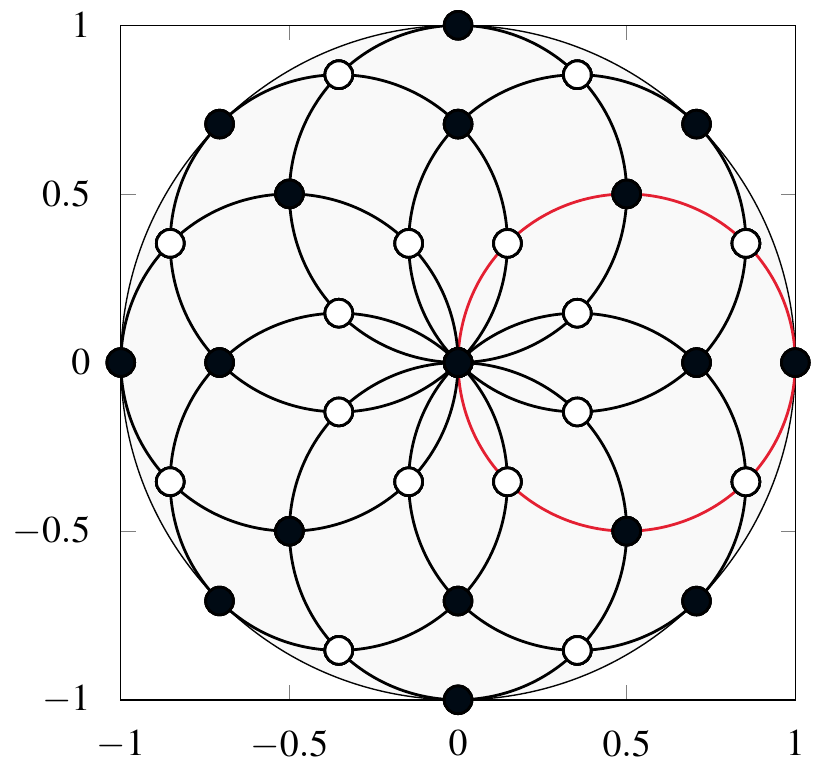}}
   	\caption{Illustration of the index set $\I^{(4,4)}$, the rhodonea nodes 
   	$\LS^{(4,4)}$ and rhodonea variety $\mathcal{R}^{(4,4)}$. The black and white nodes form two
   	interlacing subgrids determined by the subsets $\I^{(4,4)}_0$ and $\I^{(4,4)}_1$, respectively.   
   	} \label{fig:LS-2b}
\end{figure}

To obtain this more general characterization, we introduce the nodal index set
\begin{equation}\label{eq:0911}
\Imm = \left\{ \,(i_1, i_2) \in \Zz^{2}\ \left|\begin {array}{ll}
& 0\leq i_{1}\leq m_{1}, \; -2 m_2 < i_2 \leq 2 m_2, \\
& \text{$i_2 \leq 0$ \, if $i_{1} = m_1$}, \\
& \text{$i_{1} + i_{2}$ is even}
\end{array}\right. \, \right\}.
\end{equation}
The set $\Imm$ can be decomposed into the two disjoint finite grids $\Imm_0$ and $\Imm_1$ given by 
\begin{equation} \label{eq:0901} 
\Imm_0 = \{(i_1, i_2) \in \Imm \ | \ \text{$i_1$, $i_2$ are even} \, \}, \quad
\Imm_1 = \{(i_1, i_2) \in \Imm \ | \ \text{$i_1$, $i_2$ are odd} \, \}.
\end{equation} 
A tuple $\vect{i} = (i_1, i_2)$ in $\Imm$ has a one to one relation to a point in polar coordinates by introducing a radial and an angular component
\[ r^{(m_1)}_{i_1} = \cos \left( \frac{i_1}{2 m_1} \pi \right) \in [0,1], \qquad \theta^{(m_2)}_{i_2} = \frac{i_2}{2 m_2} \pi \in (-\pi,\pi].\]
The general \emph{rhodonea nodes} on the unit disk are then defined as the point set
\begin{equation} \label{eq:09172}
\LSm =\left\{\, \vect{x}^{(\vect{m})}_{\vect{i}}\,\left|\,\vect{i}\in \Imm \right.\right\},
\end{equation}
with the nodes $\vect{x}^{(\vect{m})}_{\vect{i}} \in \Dd$ given in polar coordinates 
$(r^{(m_1)}_{i_1}, \theta^{(m_2)}_{i_2})$ by
\begin{equation} \label{eq:0917234} \vect{x}^{(\vect{m})}_{\vect{i}} = \left(r^{(m_1)}_{i_1} \cos (\theta^{(m_2)}_{i_2}), r^{(m_1)}_{i_1} \sin (\theta^{(m_2)}_{i_2})\right).\end{equation}
From the almost rectangular form of $\Imm_0$ and $\Imm_1$ in \eqref{eq:0901} (see also Figure \ref{fig:LS-2a} (a) and \ref{fig:LS-2b} (a)), the cardinalities $\# \Imm_0$ and $\# \Imm_1$ can be determined by a simple counting argument:
\begin{equation}\label{eq:0917}
\#\Imm_0 = (m_1+1) m_2, \quad \#\Imm_1 = m_1 m_2, \quad \#\Imm =\#\Imm_0 +\#\Imm_1 = (2 m_1+1) m_2.
\end{equation}
Since the $m_2$ points $\vect{x}^{(\vect{m})}_{\vect{i}}$ with coordinate $i_1 = m_1$ all describe the center $(0,0)$ of $\Dd$, the cardinality
of $\LSm$ is smaller campared to $\#\Imm$:
\[\#\LSm = 2 m_1 m_2 + 1.\] 
In the setting of Corollary \ref{cor-1} i), the cardinality of the node set $\LSm_0$ corresponds exactly to the cardinality of the set $\LSm$. For general $\vect{m} \in \Nn^{2}$ with $g = \gcd(\vect{m}) \geq 1$, we have the following relation between $\LSm$ and the node points 
$\LSm_{\alpha}$ defined in \eqref{1709171731}. 

\begin{theorem} \label{cor-111}
Let $\vect{m} \in \Nn^2$. Then
\[\LSm = \bigcup_{\rho = 0}^{2 g-1} \LSm_{\rho/m_2}  = 
\bigcup_{\rho = 0}^{2 g-1}\left\{\,\vect{\rho}^{(\vect{m})}_{\rho/m_2} (t^{(\vect{m})}_{l})\ \Big| \  l\in \{0,\ldots,4m_1m_2/g-1\} \,\right\},\]
where $\rhodonea$ are the rhodonea curves introduced in \eqref{201509161237}, $t^{(\vect{m})}_{l}$ the equidistant sampling points given in \eqref{eq-samples1}, and 
$\LSm_{\alpha}$ the node points defined in \eqref{1709171731}.
\end{theorem}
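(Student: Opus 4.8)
The plan is to prove the set identity by establishing the two inclusions separately, after rewriting both descriptions in the common language of the polar coordinates $(r^{(m_1)}_{i_1}, \theta^{(m_2)}_{i_2})$. Throughout I write $g = \gcd(\vect{m})$ and $\mu_1 = m_1/g$, $\mu_2 = m_2/g$, so that $\gcd(\mu_1,\mu_2) = 1$. Since $\LSm_{\rho/m_2}$ is by definition \eqref{1709171731} the set of samples $\vect{\varrho}^{(\vect{m})}_{\rho/m_2}(t^{(\vect{m})}_l)$, the two right-hand sides coincide and it suffices to identify each such sample with a node $\vect{x}^{(\vect{m})}_{\vect{i}}$ and, conversely, to show that every index of $\Imm$ is reached.

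First I would carry out the explicit evaluation. Inserting $t^{(\vect{m})}_l = \tfrac{l\pi}{2m_1m_2}$ and $\alpha = \rho/m_2$ into \eqref{201509161237} gives $m_2 t^{(\vect{m})}_l = \tfrac{l\pi}{2m_1}$ and $m_1 t^{(\vect{m})}_l - \alpha\pi = \tfrac{(l-2\rho)\pi}{2m_2}$, so the radial factor is $\cos\tfrac{l\pi}{2m_1}$ and the angular argument is $\tfrac{(l-2\rho)\pi}{2m_2}$. To match the node form \eqref{eq:0917234} I fold $l$ modulo $4m_1$: there is a unique $i_1 \in \{0,\dots,m_1\}$ with $\cos\tfrac{i_1\pi}{2m_1} = \bigl|\cos\tfrac{l\pi}{2m_1}\bigr| = r^{(m_1)}_{i_1}$, while the sign of $\cos\tfrac{l\pi}{2m_1}$ is read off from $l \bmod 4m_1$. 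When this sign is negative it is absorbed by rotating the angle by $\pi$, i.e. by replacing $l-2\rho$ with $l-2\rho+2m_2$; letting $i_2$ be the representative of the resulting value in $(-2m_2,2m_2]$ modulo $4m_2$ then yields $\vect{\varrho}^{(\vect{m})}_{\rho/m_2}(t^{(\vect{m})}_l) = \vect{x}^{(\vect{m})}_{\vect{i}}$. Since $i_1 \equiv l \equiv i_2 \pmod 2$, the sum $i_1+i_2$ is even, and the range constraints of \eqref{eq:0911} hold by construction; the convention $i_2 \le 0$ for $i_1 = m_1$ is irrelevant because then $r^{(m_1)}_{m_1}=0$ and the node is the centre $(0,0)\in\LSm$. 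This proves $\bigcup_{\rho=0}^{2g-1}\LSm_{\rho/m_2} \subseteq \LSm$.

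For the reverse inclusion I would fix $\vect{i}=(i_1,i_2)\in\Imm$ and produce a pair $(\rho,l)$ hitting it. Choosing $l \equiv i_1 \pmod{4m_1}$ keeps the cosine nonnegative and removes the angle shift, so the requirement becomes $l-2\rho \equiv i_2 \pmod{4m_2}$. Writing $l = i_1 + 4m_1 j$ with $j \in \{0,\dots,\mu_2-1\}$ — which keeps $l$ inside $\{0,\dots,4m_1m_2/g-1\}$ — and halving the even congruence reduces the task to solving $\rho - 2m_1 j \equiv \tfrac12(i_1-i_2) \pmod{2m_2}$ with $\rho \in \{0,\dots,2g-1\}$. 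Here $2m_1 j = 2g\mu_1 j$, and as $j$ runs through $\{0,\dots,\mu_2-1\}$ the residues $\mu_1 j \bmod \mu_2$ exhaust $\{0,\dots,\mu_2-1\}$ because $\gcd(\mu_1,\mu_2)=1$; hence $2m_1 j$ runs through all multiples of $2g$ in $\Zz/2m_2\Zz$, and adding $\rho \in \{0,\dots,2g-1\}$ fills the $2g$ residues in each gap. Thus every residue modulo $2m_2 = 2g\mu_2$ is attained, a valid $(\rho,l)$ exists, and $\vect{x}^{(\vect{m})}_{\vect{i}} \in \bigcup_{\rho=0}^{2g-1}\LSm_{\rho/m_2}$, giving the opposite inclusion and hence equality.

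The routine but delicate part is the radial sign-folding and its coupling to the angular index in the evaluation step; the genuine mathematical content, and the step I expect to be the real crux, is the covering argument of the previous paragraph, where the coprimality of $\mu_1$ and $\mu_2$ is exactly what guarantees that the $2g$ rotated curves together reach every angular slot. The degenerate indices are handled trivially: the centre $i_1=m_1$ because the radius vanishes so the angle plays no role, and the boundary $i_1=0$ because $\pm i_1$ coincide modulo $2m_1$.
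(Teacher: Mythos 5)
Your proof is correct, and its mathematical core --- evaluating the sample $\vect{\rho}^{(\vect{m})}_{\rho/m_2}(t^{(\vect{m})}_l)$ as radius $\cos\bigl(\tfrac{l\pi}{2m_1}\bigr)$ with angle $\tfrac{(l-2\rho)\pi}{2m_2}$, then matching indices by modular arithmetic that ultimately rests on $\gcd(m_1/g,\,m_2/g)=1$ --- is the same as the paper's. The organization, however, is genuinely different. The paper factors the entire correspondence into an auxiliary result, Proposition \ref{1509221521}, which introduces sign parameters $u,v\in\{-1,1\}$, shows that each pair $(l,\rho)$ determines a unique $\vect{i}\in\Imm$ through the congruences \eqref{1509221526}--\eqref{1509221526B}, and proves surjectivity via the Chinese remainder theorem together with the exact fiber cardinalities \eqref{1509221526D} ($4$ for $0<i_1\leq m_1$, $2$ for $i_1=0$); Theorem \ref{cor-111} then reduces to a short trigonometric computation. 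You instead prove the two inclusions directly: the forward one by folding the radial sign into a $\pi$-shift of the angle (which is exactly what the paper's $u,v$ bookkeeping encodes, via the identity $\cos(x+\tfrac{1-v}{2}\pi)$ used in the paper's computation), and the reverse one by fixing $l\equiv i_1 \pmod{4m_1}$, writing $l=i_1+4m_1 j$, and showing that $\rho+2m_1 j$ covers every residue modulo $2m_2$ --- a hands-on instance of the same CRT argument, and your covering computation ($2g\mu_1 j \bmod 2g\mu_2$ exhausting the multiples of $2g$, with $\rho$ filling the gaps) is sound, including the range check $l\leq 4m_1m_2/g-3m_1$. Your version is leaner and fully sufficient for the set identity; what the paper's heavier lemma buys is precisely the uniqueness and multiplicity information that the theorem itself does not need but that the paper reuses later: in the proof of Proposition \ref{1507211320}, the discrete sum over $\Imm$ with weights $\mathrm{w}^{(\vect{m})}_{\vect{i}}$ is converted into a double sum over $(l,\rho)$, and the weights exactly compensate the fiber counts \eqref{1509221526D}. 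So your argument proves the theorem, while the paper's proves a refinement that powers the discrete orthogonality.

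One presentational remark: you assert at the outset that the two right-hand sides of the theorem coincide, although \eqref{1709171731} lets $l$ range up to $4m_1m_2-1$ while the theorem restricts $l$ to $\{0,\ldots,4m_1m_2/g-1\}$; for $g>1$ these ranges differ. This is not a gap in substance, since your forward folding applies verbatim to every integer $l$ and your reverse construction produces $l<4m_1m_2/g$, so the chain of inclusions you establish closes over both ranges --- equivalently, it follows from the $2\pi/g$-periodicity of $\rhodonea$, since $t^{(\vect{m})}_{l+4m_1m_2/g}=t^{(\vect{m})}_{l}+2\pi/g$ --- but it deserves an explicit sentence rather than the phrase ``by definition.''
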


\begin{example} \label{ex:1}
If $m_1$ and $m_2$ are relatively prime, we have $g = \gcd (\vect{m}) = 1$ and Theorem \ref{cor-111} states that $\LSm$ is generated by the two rhodonea curves
$\vect{\rho}^{(\vect{m})}_{0}$ and $\vect{\rho}^{(\vect{m})}_{1/m_2}$. Let $a$ and $b$ be two integers from B\'ezout's lemma such that $a m_1 + b m_2 = 1$. Then
\begin{equation}
\label{eq:111123421324}
\vect{\rho}^{(\vect{m})}_{0}(t - a \pi / m_2) = (-1)^{a+b} \vect{\rho}^{(\vect{m})}_{1/m_2}(t).
\end{equation}

\begin{enumerate}[label=\roman*)]
 \item
If $m_1+m_2$ is odd, then the rhodonea curve $\vect{\rho}^{(\vect{m})}_{0}$ satisfies $\vect{\rho}^{(\vect{m})}_{0}(t) = -\vect{\rho}^{(\vect{m})}_{0}(t-\pi)$ and $\vect{\rho}^{(\vect{m})}_{0}(\Rr)$ is point symmetric with respect
to the origin. The same holds true for the nodes $\LSm_0$ given in \eqref{1709171731}. Therefore,
the identity \eqref{eq:111123421324} implies
that $\vect{\rho}^{(\vect{m})}_{0}(\Rr) = \vect{\rho}^{(\vect{m})}_{1/m_2}(\Rr)$ and $$\LSm = \LSm_0 = \LSm_{1/m_2}.$$ Thus, according to Corollary \ref{cor-1} i), the set $\LSm$ corresponds to the union of self-intersection and boundary points of the curve $\vect{\rho}^{(\vect{m})}_{0}$. Rotating the set $\LSm$ by an angle $\alpha \pi$ gives a corresponding identity for 
the points $\LSm_\alpha$.
\item If $m_1+m_2$ is even, then $a + b$ is odd and \eqref{eq:111123421324} gives
$\vect{\rho}^{(\vect{m})}_{0}(t - a \pi / m_2) = - \vect{\rho}^{(\vect{m})}_{1/m_2}(t)$. Further, in this case the sets $\vect{\rho}^{(\vect{m})}_{0}(\Rr)$ and $\LSm_0$ are not
point symmetric with respect to the center $(0,0)$. This implies that $\LSm_{1/m_2} = -\LSm_0 \neq \LSm_0$ and $$\LSm = \LSm_0 \cup -\LSm_0.$$
In particular, $\LSm$ is generated by the samples of two distinct rhodonea curves. 
\end{enumerate}
\end{example}

\section{A link to rhodonea varieties} \label{sec:rhodvar}

The union of rhodenea curves used to generate the nodes $\LSm$ in Theorem \ref{cor-111} can be identified as an algebraic variety. For $r \in [-1,1]$, we denote by 
$T_{m_1}(r) = \cos (m_1 \arccos r)$ the univariate Chebyshev polynomial of degree $m_1$ and by $H_{m_2}(x_1,x_2)$ the bivariate polynomial 
\[ H_{m_2}(x_1,x_2) = \sum_{k=0}^{\lfloor m_2/2 \rfloor} \binom{m_2}{2k} (-1)^k x_1^{m_2-2k} x_2^{2k}.\]
$H_{m_2}$ is a bivariate homogeneous polynomial of total degree $m_2$. The \emph{rhodonea variety} $\mathcal{R}^{\vect{m}}$ on the unit disk $\Dd$ is defined as
\begin{equation} \label{1509222010} 
\mathcal{R}^{(\vect{m})}
= \left\{\,\vect{x} \in \Dd \,\Big| \, (x_1^2+x_2^2)^{m_2} T_{m_1}\left(\sqrt{x_1^2+x_2^2}\right)^2 = H_{m_2}(x_1,x_2)^2 \, \right\}.
\end{equation}
This affine real algebraic variety is of order $2 m_1 + 2 m_2$. In polar coordinates, we get a simpler description of this variety. With the substitution $x_1(r,\theta) = r \cos \theta$ and
$x_2(r,\theta) = r \sin \theta$ and the trigonometric formula 
\[ \cos (m_2 \theta) = \sum_{k=0}^{\lfloor m_2/2 \rfloor} \binom{m_2}{2k} (-1)^k \cos(\theta)^{m_2-2k} \sin(\theta)^{2k} \]
we can rewrite \eqref{1509222010} as
\begin{equation} \label{1509222011} 
\mathcal{R}^{(\vect{m})}
= \left\{\,\left.\vect{x}(r,\theta) \in \Dd \,\right|\, T_{m_1}(r)^2 = \cos^2(m_2 \theta) \,\right\}.
\end{equation}
Since $H_{m_2}(\cos \theta, \sin \theta) = \cos (m_2 \theta)$ we see that $H_{m_2}(x_1,x_2)$ is in fact a harmonic homogeneous polynomial of degree $m_2$. 

\begin{theorem} \label{thm:decompositionrhodonea}

\begin{enumerate}[label=\alph*)]
\item The variety $\mathcal{R}^{(\vect{m})}$ can be decomposed as
$\displaystyle\mathcal{R}^{(\vect{m})} = \bigcup_{\rho = 0}^{2 g-1} \vect{\rho}^{(\vect{m})}_{\rho/m_2}([0,P)).$
\item The rhodonea nodes $\LSm$ can be written as
\[\LSm = \left\{\,\left.\vect{x}(r,\theta) \in \Dd \,\right|\, T_{m_1}(r)^2 = \cos^2(m_2 \theta) \in \{0,1\} \,\right\}, \]
i.e., the set $\LSm$ consists of those points of the variety $\mathcal{R}^{(\vect{m})}$ for which $T_{m_1}(r)^2$ and $\cos^2(m_2 \theta)$ get maximal or minimal. 
\end{enumerate}
\end{theorem}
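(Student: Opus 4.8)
The plan is to prove both parts by passing to polar coordinates and exploiting the single identity $T_{m_1}(\cos\phi)=\cos(m_1\phi)$, which links the defining equation \eqref{1509222011} of $\mathcal{R}^{(\vect{m})}$ to purely trigonometric conditions. Part b) then reduces to a direct evaluation, the inclusion ``$\supseteq$'' in part a) to a one-line substitution, and the only genuinely delicate point is the surjectivity ``$\subseteq$'' in part a): showing that the $2g$ curves already exhaust the variety and that the rotation index can be forced into the range $\{0,\dots,2g-1\}$.

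For part a), ``$\supseteq$'', I would take a point $\vect{\rho}^{(\vect{m})}_{\rho/m_2}(t)$ from \eqref{201509161237} and read off its polar data: its radius is $r=|\cos(m_2t)|$ and its polar angle is $\theta=m_1t-\rho\pi/m_2+\delta$ with $\delta\in\{0,\pi\}$ accounting for the sign of $\cos(m_2t)$. Using $T_{m_1}(-x)=(-1)^{m_1}T_{m_1}(x)$ together with $T_{m_1}(\cos\phi)=\cos(m_1\phi)$ gives $T_{m_1}(r)^2=\cos^2(m_1m_2t)$, while $\cos^2(m_2\theta)=\cos^2(m_1m_2t-\rho\pi+m_2\delta)=\cos^2(m_1m_2t)$ since $\rho\pi$ and $m_2\delta$ are integer multiples of $\pi$; hence \eqref{1509222011} holds and each curve lies in $\mathcal{R}^{(\vect{m})}$. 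For ``$\subseteq$'', given $(r,\theta)\in\mathcal{R}^{(\vect{m})}$ I would set $\psi=\arccos r\in[0,\pi/2]$, so that \eqref{1509222011} reads $\cos^2(m_1\psi)=\cos^2(m_2\theta)$, i.e.\ $m_1\psi\equiv\pm m_2\theta\pmod\pi$. Taking $m_2t=\psi+2\pi s$ makes the radius equal to $r$ and puts the polar angle at $\theta+(k+2sm_1-\rho)\pi/m_2$, where $k$ comes from the congruence; matching this to $\theta$ modulo $2\pi$ amounts to solving $\rho\equiv k+2sm_1\pmod{2m_2}$ for some $s\in\Zz$ and some $\rho\in\{0,\dots,2g-1\}$.

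This last congruence is the heart of the argument and the step I expect to be hardest. As $s$ ranges over $\Zz$, the values $2sm_1\bmod 2m_2$ form the subgroup generated by $2m_1$ in $\Zz/2m_2\Zz$, which by $\gcd(2m_1,2m_2)=2g$ is exactly the set of multiples of $2g$; consequently $k+2sm_1$ attains every residue $\equiv k\pmod{2g}$, and in particular the representative $\rho=k\bmod 2g\in\{0,\dots,2g-1\}$. This is precisely where the bound $2g$ on the number of curves originates and where B\'ezout's identity (as already used in Example \ref{ex:1}) enters. The sign case $m_1\psi\equiv-m_2\theta$ I would dispatch symmetrically, by replacing $m_2t=\psi+2\pi s$ with $m_2t=-\psi+2\pi s$ and adjusting $\delta$, or by invoking the reflection symmetry of the curve family; since $\vect{\rho}^{(\vect{m})}_{\rho/m_2}([0,P))=\vect{\rho}^{(\vect{m})}_{\rho/m_2}(\Rr)$ by periodicity, only membership in the curve image matters and the particular value of $t$ is immaterial.

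For part b) I would simply evaluate the two sides of \eqref{1509222011} at a node $\vect{x}^{(\vect{m})}_{\vect{i}}$. The identity $T_{m_1}(\cos\phi)=\cos(m_1\phi)$ gives $T_{m_1}\bigl(r^{(m_1)}_{i_1}\bigr)=\cos(i_1\pi/2)$ and $\cos\bigl(m_2\theta^{(m_2)}_{i_2}\bigr)=\cos(i_2\pi/2)$, each lying in $\{0,\pm1\}$; thus both squares lie in $\{0,1\}$, equalling $1$ exactly when the index is even and $0$ when it is odd. The parity constraint ``$i_1+i_2$ even'' in \eqref{eq:0911} therefore coincides with the requirement that these two squares agree, so every node satisfies $T_{m_1}(r)^2=\cos^2(m_2\theta)\in\{0,1\}$. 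Conversely, for $r\in[0,1]$ the condition $T_{m_1}(r)^2\in\{0,1\}$ forces $r$ to be a zero or an extremum of $T_{m_1}$, i.e.\ $r=\cos\bigl(i_1\pi/(2m_1)\bigr)=r^{(m_1)}_{i_1}$ with $i_1$ odd or even respectively, and likewise $\cos^2(m_2\theta)\in\{0,1\}$ forces $\theta=\theta^{(m_2)}_{i_2}$; the common value then pins the parities to agree, which is exactly $i_1+i_2$ even. Reconciling the admissible ranges of $i_1,i_2$ with the index set $\Imm$ in \eqref{eq:0911} (including the identification of the $i_1=m_1$ indices with the center $(0,0)$) yields the reverse inclusion and completes the proof.
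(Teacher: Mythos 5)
Your proposal is correct and follows essentially the same route as the paper: part b) by identifying the points where $T_{m_1}(r)^2$ and $\cos^2(m_2\theta)$ take the values $1$ and $0$ with the even- and odd-index polar grids, and part a) by rewriting the variety equation in polar form as the congruence $m_1\psi \equiv \pm m_2\theta \pmod{\pi}$ and then using elementary number theory based on $\gcd(m_1,m_2)=g$ to force the rotation index into $\{0,\dots,2g-1\}$. The only cosmetic difference is bookkeeping: you solve the radial equation for $t$ first and resolve the single remaining angular congruence via the subgroup $\langle 2m_1\rangle = \langle 2g\rangle$ of $\Zz/2m_2\Zz$, whereas the paper keeps two simultaneous congruences and invokes the Chinese remainder theorem after fixing $\rho$ modulo $2g$ --- equivalent arguments resting on the same gcd fact.
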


\begin{remark}
We mention (without explicit proof) that in addition to the statements of Theorem \ref{thm:decompositionrhodonea}, the points in $\LSm$ can also be categorized in terms of singularity theory. 
This yields a description similar to the one given in Corollary \ref{cor-1} i). Namely, the elements of $\LSm$ in the interior of the unit disk $\Dd$ are precisely 
the singular points of the algebraic variety $\mathcal{R}^{(\vect{m})}$. The singular points distinct from the center $(0,0)$ are all ordinary double points while the center itself is a singular
point with multiplicity $2 m_2$. 
\end{remark}

\begin{example} 
\begin{enumerate}[label=\roman*)]
 \item We consider the setting of Corollary \ref{cor-1} i), i.e., $m_1$ and $m_2$ are relatively prime and $m_1 + m_2$ is odd. Then, according to Example 
 \ref{ex:1} i) and Theorem \ref{thm:decompositionrhodonea} a), we have $\mathcal{R}^{(\vect{m})} = \vect{\rho}^{(\vect{m})}_{0}([0,2 \pi))$, i.e. $\mathcal{R}^{(\vect{m})}$ provides
 the algebraic equation of the curve $\vect{\rho}^{(\vect{m})}_{0}$ given in parametric form in \eqref{201509161237}. This characterization of $\vect{\rho}^{(\vect{m})}_{0}$ is well known in the literature. Slightly less compact variants of the definition in \eqref{1509222010} can be found in 
 \cite{Gorjanc2010,Himstedt1888,Loria1902}. 
  \item If $m_1$ and $m_2$ are relatively prime and $m_1+m_2$ is even, we get a different scenario. In this case, Example 
 \ref{ex:1} ii) and Theorem \ref{thm:decompositionrhodonea} a) imply that 
 $\mathcal{R}^{(\vect{m})} = \vect{\rho}^{(\vect{m})}_{0}([0,\pi)) \cup -\vect{\rho}^{(\vect{m})}_{0}([0,\pi))$, i.e. the algebraic variety 
 $\mathcal{R}^{(\vect{m})}$ is the union of two distinct rhodonea curves. The algebraic varieties $\mathcal{R}^{(\vect{m})}_+$ and $\mathcal{R}^{(\vect{m})}_-$ describing 
 the single curves $\pm \vect{\rho}^{(\vect{m})}_{0}$ are given as
 \[ \mathcal{R}^{(\vect{m})}_{\pm} = \left\{\,\left.\vect{x} \in \Dd \,\right|\, (x_1^2+x_2^2)^{\frac{m_2}{2}} 
 T_{m_1}\left(\sqrt{x_1^2+x_2^2}\right) = \pm H_{m_2}(x_1,x_2) \,\right\}.\]
 $\mathcal{R}^{(\vect{m})}_{\pm}$ are algebraic varieties only in the given particular case that both $m_1$ and $m_2$ are odd.
 The description of $\vect{\rho}^{(\vect{m})}_{0}$ as the algebraic variety $\mathcal{R}^{(\vect{m})}_+$  is also usually provided in the literature, see \cite{Gorjanc2010,Himstedt1888,Loria1902}. 
 The particular variety $\mathcal{R}^{(5,3)}$ and its two subvarieties $\mathcal{R}^{(5,3)}_{\pm}$ are illustrated in Figure \ref{fig:LS-2a} (b). 
 \item For $\vect{m} = (1,1)$, the points of the rhodonea variety $\mathcal{R}^{(1,1)}$ satisfy the equation 
 \[(x_1^2 + x_2^2)^2 = x_1^2 \quad \Leftrightarrow \quad x_1^2 + x_2^2 = \pm x_1.\]
 In this case, the variety consists of two circles with diameter $1$ and radius $1/2$ centered at $\pm 1/2$. The two circles correspond to the
 two rhodonea curves $\vect{\varrho}^{(1,1)}_0$ and $\vect{\varrho}^{(1,1)}_1$. The variety $\mathcal{R}^{(1,1)}$ is part of the larger 
 variety $\mathcal{R}^{(4,4)}$ illustrated in Figure \ref{fig:LS-2b} (b).
  \item For $\vect{m} = (1,2)$, the rhodonea variety $\mathcal{R}^{(1,2)}$ is determined by the equation 
 \[(x_1^2 + x_2^2)^3 = (x_1^2-x_2^2)^2.\]
 The corresponding curve $\vect{\varrho}^{(1,2)}_0$ gives the so called four leave rose, a curve having the form of a rose with four petals. In general, the curve $\vect{\varrho}^{(1,m_2)}_0$
 has the shape of a rose with $2m_2$ leaves if $m_2$ is even, and $m_2$ leaves if $m_2$ is odd. The description of $\vect{\varrho}^{(1,m_2)}_0$ in polar coordinates is given by $r = \cos (m_2 \theta)$.
For some illustrations of these roses we refer to \cite{Gorjanc2010}.
\end{enumerate}

\end{example}

\section{Spectral index sets and discrete orthogonality on $\Imm$}\label{1507091240}

\paragraph{\bf 5.1. Discrete function space on $\Imm$}

We denote by $\mathcal{L}(\Imm)$ the space of all discrete functions $f: \Imm \to \Cc$ 
on the index set $\Imm$. 
In $\mathcal{L}(\Imm)$, we consider further the family of functions $\polbas\in \mathcal{L}(\Imm)$, $\vect{\gamma} \in \Zz^2$, given by
\begin{equation}\label{A1508291531}
\polbas(\vect{i}) = \cos(\gamma_{1} i_1 \pi/(2m_{1})) \mathrm{e}^{\imath \gamma_{2} i_2 \pi/(2m_{2}) }.
\end{equation}
In the following, our objective is to derive a discrete orthogonality structure 
for the functions $\polbas$ on $\Imm$. As the functions $\polbas$ are a discretization of 
the Chebyshev-Fourier basis $\polbascont$ AT the nodes $\LSm$ (this will be derived in Section \ref{sec:interpolation}), this discrete orthogonality is the key ingredient for 
the proof of the main Theorems \ref{201512131945} and \ref{201512131946} on 
spectral interpolation on the rhodonea nodes. 

To introduce an inner product on the space $\mathcal{L}(\Imm)$ we define for $\vect{i}\in \Imm$ the weights
\begin{equation}\label{1507091748}
\mathrm{w}^{(\vect{m})}_{\vect{i}}= \frac{1}{4m_1m_2}\left\{ \begin{array}{ll} 1 & \text{if}\ \ \vect{i}\in\Imm, \; i_1 = 0\\
                                                                2     & \text{if}\ \ \vect{i}\in\Imm, \; 0 < i_1 \leq m_1.
                                                                \end{array} \right.
\end{equation}
The corresponding discrete measure $\mathrm{w}$ on the power set of $\Imm$ is defined by
$\mathrm{w}(\{\vect{i}\})=\mathrm{w}^{(\vect{m})}_{\vect{i}}$. Then, the inner product
\[ \langle f,g \rangle_{\mathrm{w}} = \int f \, \overline{g} \, \mathrm{d}\rule{1pt}{0pt}\mathrm{w}
= \sum_{\vect{i} \in \Imm} \mathrm{w}^{(\vect{m})}_{\vect{i}} f(\vect{i}) \overline{g(\vect{i})}\]
turns $\mathcal{L}(\Imm)$ into a Hilbert space. 
We denote the corresponding norm by $\|\cdot\|_{\mathrm{w}}$.
 
\begin{definition} \label{def:spectralindex} We call $\Gam \subset \Zz^2$ a \emph{spectral index set for $\Imm$} if the system $\{\polbas \ | \ \vect{\gamma} \in \Gam\}$
forms an orthogonal basis of the inner product space $(\mathcal{L}(\Imm),\langle\,\cdot,\cdot\,\rangle_{\mathrm{w}})$.
We additionally assume that the spectral index set $\Gam$ is a subset of
\[ \Kmm = \{ \vect{\gamma} \in \Zz^2 \ | \ 0 \leq \gamma_1 \leq 2 m_1, \ -2 m_2 < \gamma_2 \leq 2 m_2 \}. \]
and that $\gamma_1 + \gamma_2$ is even for all $\vect{\gamma} \in \Gam$. The last condition is referred to as parity condition. 
\end{definition}

 \begin{figure}[htb]
 	\centering
 	\subfigure[The triangular spectral index set $\vect{\Gamma}^{(5,3)}_{\triangle}$.
 	]{\includegraphics[scale=0.95]{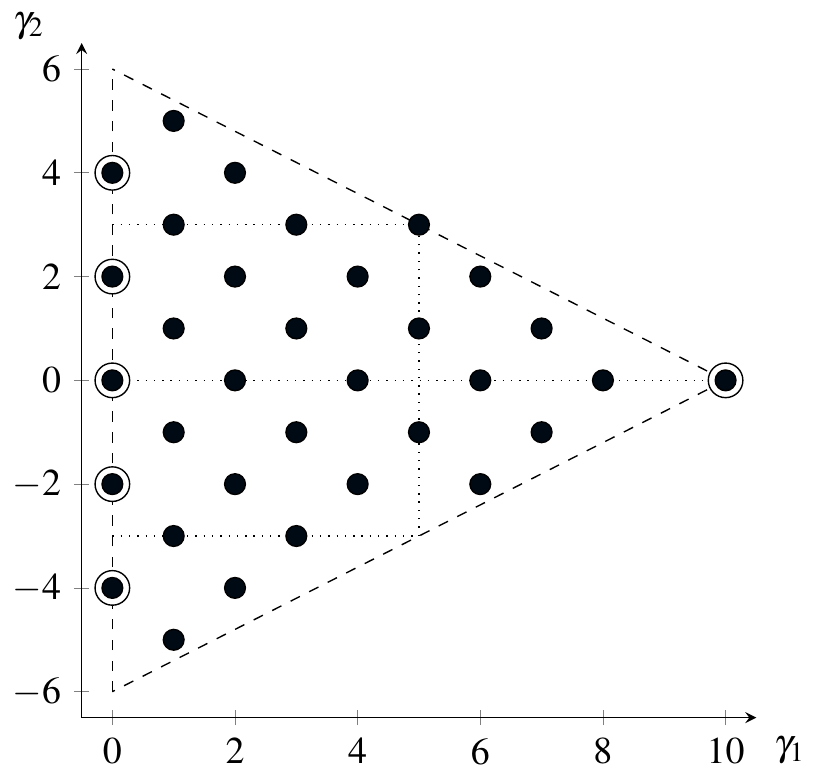}}
 	\hfill	
 	\subfigure[The rectangular spectral index set $\vect{\Gamma}^{(5,3)}_{\square}$.
 	]{\includegraphics[scale=0.95]{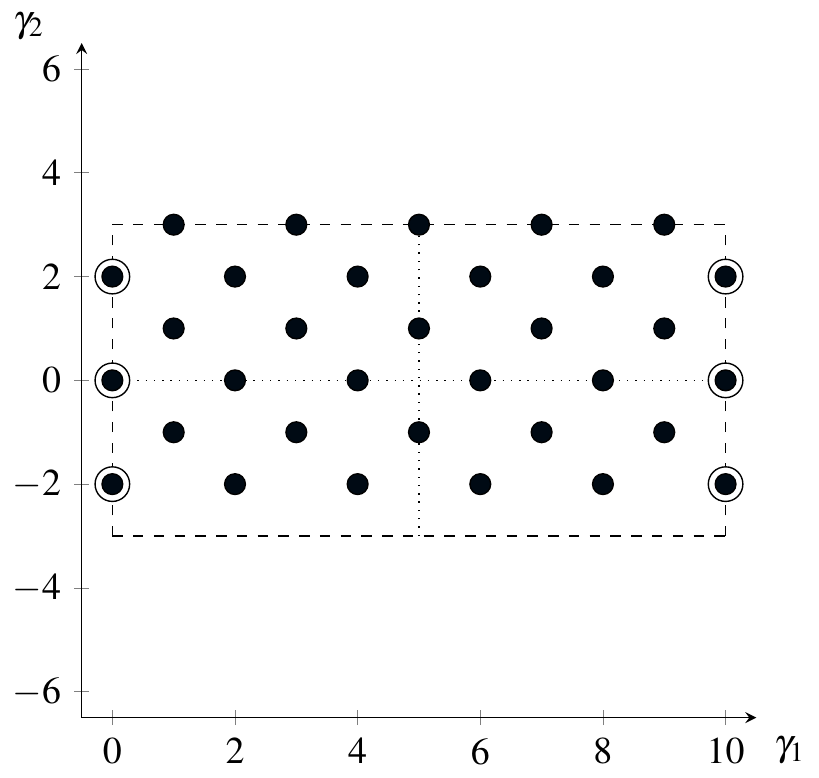}}
   	\caption{Illustration of the spectral index sets $\Gamtriangle$ and $\Gamsquare$.
   	The circled dots indicate the basis functions in \eqref{1508221825} and \eqref{1508221825BB} with norm $1$.
   	} \label{fig:LS-3}
 \end{figure}

\paragraph{\bf 5.2. Rectangular spectral index sets}
For our purpose, the most important example of a spectral index set is the rectangular set
\begin{equation}\label{1508222042base}
\Gamsquare = \left\{\,\vect{\gamma}\in \Kmm \left| \ - m_2 < \gamma_{2} \leq m_{2}, \ 
\gamma_1 + \gamma_2 \ \text{is even}
\right.
\right\}.
\end{equation}
The set $\Gamsquare$ contains $\# \Gamsquare = (2m_1 + 1) m_2$ elements. This corresponds exactly to the cardinality of $\Imm$. In fact, we obtain: 

\begin{theorem}\label{1507091911} The set $\Gamsquare$ is a spectral index set for $\Imm$, i.e. 
the system $\{ \polbas \ | \ \vect{\gamma}\in \Gamsquare \}$ is an
orthogonal basis of the $(2m_1+1) m_2$ dimensional space $(\mathcal{L}(\Imm),\langle\,\cdot,\cdot\,\rangle_{\mathrm{w}})$. 
The basis functions $\polbas$ are normed by 
\begin{equation}\label{1508221825}
 \|\polbas\|_{\mathrm{w}}^2 =
 \left\{ \begin{array}{rl} 1,\; & \text{if}\quad \vect{\gamma} \in \Gamsquare, \ \gamma_1 \in \{0,2m_1\},\\
 \frac12,\; & \text{if}\quad \vect{\gamma} \in \Gamsquare, \ \gamma_1 \notin \{0,2m_1\}.
   \end{array} \right.
\end{equation}
\end{theorem}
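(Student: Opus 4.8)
The plan is to evaluate the weighted inner product $\langle \polbas, \chi^{(\vect{m})}_{\vect{\gamma}'}\rangle_{\mathrm{w}}$ for arbitrary $\vect{\gamma},\vect{\gamma}' \in \Gamsquare$ directly from the definition and to read off both the orthogonality (when $\vect{\gamma}\neq\vect{\gamma}'$) and the norms (when $\vect{\gamma}=\vect{\gamma}'$). Setting $\delta_2 = \gamma_2-\gamma_2'$, the summand splits into a radial factor $\cos(\gamma_1 i_1\pi/(2m_1))\cos(\gamma_1' i_1\pi/(2m_1))$ and an angular factor $\mathrm{e}^{\imath\delta_2 i_2\pi/(2m_2)}$, and since $\mathrm{w}^{(\vect{m})}_{\vect{i}}$ depends on $i_1$ only, I would first perform the inner summation over $i_2$ for each fixed $i_1$. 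For $0\le i_1\le m_1-1$ the admissible indices $i_2$ of parity $i_1\bmod 2$ fill a full angular period $-2m_2 < i_2 \le 2m_2$; a geometric-series evaluation then gives $2m_2$ if $\delta_2=0$ and $0$ otherwise, independently of the parity of $i_1$. For the boundary slice $i_1=m_1$, however, the defining constraint $i_2\le 0$ leaves only a half period, and the resulting sum $H(\delta_2)=\sum_{i_2}\mathrm{e}^{\imath\delta_2 i_2\pi/(2m_2)}$ no longer collapses to a Kronecker delta in $\delta_2$.

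If $\delta_2\neq 0$, every interior slice $i_1<m_1$ contributes nothing, so the entire inner product reduces to the single boundary term, which carries the prefactor $\cos(\gamma_1\pi/2)\cos(\gamma_1'\pi/2)$ --- the common value of the two radial cosines at $i_1=m_1$. This prefactor is nonzero only when $\gamma_1$ and $\gamma_1'$ are both even; but the parity condition $\gamma_1+\gamma_2$ even built into $\Gamsquare$ then forces $\gamma_2$ and $\gamma_2'$ to be even as well, hence $\delta_2$ to be even. A second geometric-series computation (treating the cases $m_1$ even and $m_1$ odd, in which the surviving parity of $i_2$ at $i_1=m_1$ differs) shows that $H(\delta_2)=0$ for every even $\delta_2$ with $0<|\delta_2|<2m_2$. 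Thus the boundary term vanishes as well and the inner product is $0$. This is the step where the parity condition is indispensable: it excludes exactly the odd values of $\delta_2$ for which $H(\delta_2)$ would be nonzero.

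If $\delta_2=0$ (i.e.\ $\gamma_2=\gamma_2'$), the angular factor is identically $1$, and after combining the slice cardinalities with the weights the inner product collapses to the endpoint-weighted cosine sum
\[ \langle \polbas, \chi^{(\vect{m})}_{\vect{\gamma}'}\rangle_{\mathrm{w}} = \frac{1}{m_1}\sum_{i_1=0}^{m_1} \varepsilon_{i_1}\,\cos\!\Big(\tfrac{\gamma_1 i_1\pi}{2m_1}\Big)\cos\!\Big(\tfrac{\gamma_1' i_1\pi}{2m_1}\Big), \]
where $\varepsilon_{i_1}=\tfrac12$ for $i_1\in\{0,m_1\}$ and $\varepsilon_{i_1}=1$ otherwise. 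Expanding the product via $2\cos a\cos b=\cos(a-b)+\cos(a+b)$ reduces everything to the kernel $C(\delta)=\sum_{i_1=0}^{m_1}\varepsilon_{i_1}\cos(\delta i_1\pi/(2m_1))$, and a short computation gives $C(\delta)=m_1$ when $\delta\equiv 0\pmod{4m_1}$ and $C(\delta)=0$ for every other even $\delta$. Since $\gamma_2=\gamma_2'$ together with the parity condition forces $\gamma_1\equiv\gamma_1'\pmod 2$, both arguments $\gamma_1-\gamma_1'\in[-2m_1,2m_1]$ and $\gamma_1+\gamma_1'\in[0,4m_1]$ are even, so $\langle \polbas,\chi^{(\vect{m})}_{\vect{\gamma}'}\rangle_{\mathrm{w}}=\tfrac{1}{2m_1}\big(C(\gamma_1-\gamma_1')+C(\gamma_1+\gamma_1')\big)$ vanishes unless $\gamma_1=\gamma_1'$. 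On the diagonal this leaves $\|\polbas\|_{\mathrm{w}}^2=\tfrac{1}{2m_1}\big(m_1+C(2\gamma_1)\big)$, equal to $1$ for $\gamma_1\in\{0,2m_1\}$ and $\tfrac12$ otherwise, in agreement with \eqref{1508221825}.

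Combining the two cases shows that the family $\{\polbas : \vect{\gamma}\in\Gamsquare\}$ is orthogonal with strictly positive norms, hence linearly independent; as $\#\Gamsquare=(2m_1+1)m_2=\#\Imm=\dim\mathcal{L}(\Imm)$, it is automatically an orthogonal basis, which is the assertion of the theorem. The main obstacle --- and the only point at which the precise geometry of $\Imm$ enters --- is the asymmetric boundary slice $i_1=m_1$: it breaks the naive product structure of the double sum, and the heart of the argument is to verify that its residual contribution cancels exactly in the single regime (even, nonzero $\delta_2$) where it could otherwise destroy orthogonality, a cancellation secured by the parity condition defining $\Gamsquare$.
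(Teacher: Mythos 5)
Your proof is correct --- I checked the angular slice sums, the vanishing of the half-period boundary sum $H(\delta_2)$ for even nonzero $\delta_2$ in both parity cases of $m_1$, the kernel values $C(\delta)$, and the range argument showing $C(\gamma_1-\gamma_1')+C(\gamma_1+\gamma_1')$ vanishes off the diagonal --- but it takes a genuinely different route from the paper. The paper never evaluates the double sum slice by slice over the grid $\Imm$. Instead it first establishes a moment lemma (Proposition \ref{1507211320}): $\int \polbas \, \mathrm{d}\mathrm{w} \neq 0$ forces $\gamma_1 = 2h_1 m_1$, $\gamma_2 = 2h_2 m_2$ with $h_1 + h_2$ even, in which case the integral equals $1$; this is proven by pulling the sum over $\Imm$ back to the equidistant curve samples $(l,\rho)$ via the parametrization of Proposition \ref{1509221521}, symmetrizing over the signs $u,v \in \{\pm 1\}$, and applying a single geometric series. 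Orthogonality and the norms \eqref{1508221825} then follow from the product formulas \eqref{1507222159} and \eqref{1507222159C} together with a case check of condition \eqref{1507201132} for the shifted indices $\vect{\gamma}^{\pm} = (\gamma_1 \pm \gamma_1', \gamma_2 - \gamma_2')$. You, by contrast, work directly on the grid: angular geometric series per radial slice, then reduction of the radial part to the classical Chebyshev--Gauss--Lobatto discrete cosine orthogonality with halved endpoint weights (your $\varepsilon_{i_1}$), using the product-to-sum identity only in the radial variable. Your version makes completely explicit the two features that the paper's bookkeeping hides --- the asymmetric half-period slice at $i_1 = m_1$, whose effective half-weight comes from the count $m_2$ rather than $2m_2$ of admissible $i_2$, and the precise place where the parity condition on $\Gamsquare$ is indispensable (killing the radial prefactor $\cos(\gamma_1\pi/2)\cos(\gamma_1'\pi/2)$ exactly in the odd-$\delta_2$ regime where $H(\delta_2)\neq 0$). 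What the paper's route buys instead is reusability and conceptual alignment: the moment lemma is applied again verbatim for the real basis in Theorem \ref{1507091912}, and its proof via the curve parametrization exhibits the discrete orthogonality as an aliasing statement along the rhodonea trajectories, which is the structural backbone of the whole interpolation and quadrature theory. Both arguments conclude identically with the dimension count $\#\Gamsquare = (2m_1+1)m_2 = \#\Imm$, upgrading the orthogonal family with strictly positive norms to an orthogonal basis.
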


\paragraph{\bf 5.3. General spectral index sets} Based on the rectangular index set $\Gamsquare$, we can characterize all further spectral index sets $\Gam$ contained in Definition \ref{def:spectralindex}. 
On $\Kmm$, we define a flip operator ${}^*: \Kmm \to \Kmm$, $\vect{\gamma} \to 
\vect{\gamma}^*$ by
\begin{equation} \label{eq:201807201118} \vect{\gamma}^* = (2 m_1 - \gamma_1 , \gamma_2 + 2m_2 \mod 4 m_2).\end{equation}
This flip operator combines a reflection at $\gamma_1 = m_1$ with a glide operation along the $\gamma_2$ coordinate. If $\gamma_1 +\gamma_2$ is even,
then the basis functions $\polbas$ are invariant under this glide-reflection operation, i.e we have for all $\vect{i} \in \Imm$:
\begin{equation} \label{eq:glidereflection}
 \chi_{\vect{\gamma}^*}^{(\vect{m})}(\vect{i}) = \polbas(\vect{i}) \quad \text{if $\gamma_1 + \gamma_2$ is even}.
\end{equation}
The flip operator on $\Kmm$ is an involution, i.e. $\vect{\gamma}^{**} = \vect{\gamma}$. Further, if $\vect{\gamma} \in \Gamsquare$, then $\vect{\gamma}^* \in \Kmm \setminus\Gamsquare$ and $\gamma_1^* + \gamma_2^*$ is also an even number. 

Now, for an arbitrary subset $\Omega$ of $\Gamsquare$, we define the index set
\begin{equation} \label{eq:201807211209}
\GamD = \{ \Gamsquare \setminus \Omega\} \ \cup \ \{\vect{\gamma} \in \Kmm \ | \ \vect{\gamma}^* \in \Omega \}.
\end{equation}
By the considerations above, we have $\# \GamD = \# \Gamsquare = (2 m_1 +1 ) m_2$ and Theorem \ref{1507091911} combined with the glide-reflection 
symmetry \eqref{eq:glidereflection} of the basis functions $\polbas$ implies that also $\GamD$ is a spectral index set for $\Imm$. 

On the other hand, every spectral index set $\Gam \subset \Kmm$ given in Definition \ref{def:spectralindex}, contains $\# \Gam = (2 m_1 +1 ) m_2$ elements and the glide-reflection symmetry \eqref{eq:glidereflection} implies for $\vect{\gamma} \in \Gam$
that $\vect{\gamma}^* \in \Kmm \setminus \Gam$ and $\gamma_1^* + \gamma_2^*$ is even. 
By setting $\Omega = \Gam \cap \Gamsquare$, the set $\Gam$ is therefore identical 
to the spectral index set $\GamD$ given in \eqref{eq:201807211209}. We summarize these findings:

\begin{corollary}\label{1507091911BB} 
Every spectral index set $\Gam$ in Definition \ref{def:spectralindex} can 
be written in the form \eqref{eq:201807211209}, i.e., $\Gam = \GamD$ with $\Omega \subseteq \Gamsquare$.  
The basis functions $\polbas$, $\vect{\gamma} \in \Gam$, are normed by 
\begin{equation}\label{1508221825BB}
 \|\polbas\|_{\mathrm{w}}^2 =
 \left\{ \begin{array}{rl} 1,\; & \text{if}\quad \vect{\gamma} \in \Gam, \ \gamma_1 \in \{0,2m_1\},\\
 \frac12,\; & \text{if}\quad \vect{\gamma} \in \Gam, \ \gamma_1 \notin \{0,2m_1\}.
   \end{array} \right.
\end{equation}
\end{corollary}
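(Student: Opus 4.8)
The plan is to exhibit the flip operator ${}^*$ of \eqref{eq:201807201118} as a fixed-point-free involution on the parity-admissible indices of $\Kmm$, so that every spectral index set is a transversal picking one index from each two-element orbit $\{\vect{\gamma},\vect{\gamma}^*\}$; the data $\Omega$ then merely records on which orbits the choice differs from the reference set $\Gamsquare$. The operator is an involution, as already noted, and it has no fixed point on $\{\vect{\gamma}\in\Kmm \ : \ \gamma_1+\gamma_2 \text{ even}\}$, since $\vect{\gamma}^*=\vect{\gamma}$ would require simultaneously $\gamma_1=m_1$ and $2m_2\equiv 0 \pmod{4m_2}$, the latter being impossible. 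The parity-admissible indices therefore decompose into orbits of size two; counting them gives $\tfrac{1}{2}(2m_1+1)\,2m_2=(2m_1+1)m_2=\dim\mathcal{L}(\Imm)$, and by \eqref{eq:glidereflection} the two members of each orbit induce the \emph{same} discrete function, $\polbas=\chi^{(\vect{m})}_{\vect{\gamma}^*}$ on $\Imm$.

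Next I would show that an arbitrary spectral index set $\Gam$ from Definition \ref{def:spectralindex} is a transversal. Its parity assumption places $\Gam$ among the parity-admissible indices; it cannot contain both members of an orbit, for then two of the orthogonal basis functions $\polbas$ would coincide, contradicting their linear independence. So $\Gam$ meets each orbit at most once, and since an orthogonal basis of $\mathcal{L}(\Imm)$ has exactly $(2m_1+1)m_2$ elements --- the number of orbits --- it meets each orbit exactly once. The reference set $\Gamsquare$ is likewise a transversal (Theorem \ref{1507091911}, together with the stated fact that ${}^*$ maps $\Gamsquare$ into $\Kmm\setminus\Gamsquare$ and a routine check that every orbit meets $\Gamsquare$). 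Writing $\vect{\gamma}$ for the $\Gamsquare$-representative of an orbit and setting $\Omega=\Gamsquare\setminus\Gam$, the orbits split into those with $\vect{\gamma}\in\Gam$, on which both $\Gam$ and $\GamD$ select $\vect{\gamma}\in\Gamsquare\setminus\Omega$, and those with $\vect{\gamma}\in\Omega$, on which $\Gam$ selects the partner $\vect{\gamma}^*$ while $\GamD$ contains it through the set $\{\vect{\delta}\in\Kmm \ : \ \vect{\delta}^*\in\Omega\}$. Hence $\Gam=\GamD$ with $\Omega\subseteq\Gamsquare$.

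For the norm formula \eqref{1508221825BB} I would reduce to Theorem \ref{1507091911}. If $\vect{\gamma}\in\Gam\cap\Gamsquare$, then \eqref{1508221825} applies directly. If instead $\vect{\gamma}\in\Gam\setminus\Gamsquare$, then $\vect{\gamma}^*\in\Gamsquare$ and $\polbas=\chi^{(\vect{m})}_{\vect{\gamma}^*}$, so $\|\polbas\|_{\mathrm{w}}^2=\|\chi^{(\vect{m})}_{\vect{\gamma}^*}\|_{\mathrm{w}}^2$ is again supplied by \eqref{1508221825}. The dichotomy there depends only on whether the first coordinate lies in $\{0,2m_1\}$, and this membership is invariant under the reflection $\gamma_1\mapsto 2m_1-\gamma_1$; the value therefore agrees whether read off from $\gamma_1$ or from $\gamma_1^*$, which is precisely the assertion of \eqref{1508221825BB}.

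The main obstacle is conceptual rather than computational: one must recognise the orbit structure induced by the flip and run the transversal count cleanly. The single point deserving care is the bookkeeping for $\Omega$ --- it is the part of $\Gamsquare$ that gets \emph{replaced}, i.e. $\Omega=\Gamsquare\setminus\Gam$ --- together with the verification that ${}^*$ carries $\Gamsquare$ bijectively onto $\Kmm\setminus\Gamsquare$ within each parity class, which is exactly what makes the two set identities for $\GamD$ hold.
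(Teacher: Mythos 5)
Your proof is correct and follows essentially the same route as the paper: the involutive flip operator \eqref{eq:201807201118}, the glide-reflection invariance \eqref{eq:glidereflection}, and a cardinality count against $\#\Imm$, with the norm values \eqref{1508221825BB} inherited from Theorem \ref{1507091911} and the $\gamma_1\mapsto 2m_1-\gamma_1$ invariance of the dichotomy; your orbit/transversal language is merely a cleaner packaging of this argument. One remark: your bookkeeping $\Omega=\Gamsquare\setminus\Gam$ is in fact the correct one --- the paper's text sets $\Omega=\Gam\cap\Gamsquare$, which is a slip (for $\Gam=\Gamsquare$ it would yield $\GamD=\{\vect{\gamma}\in\Kmm \ | \ \vect{\gamma}^*\in\Gamsquare\}\neq\Gamsquare$, contradicting the paper's own example $\Omega=\emptyset$), so you have silently repaired a typo rather than deviated from the intended proof.
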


\begin{example} The choice $\Omega = \emptyset$ gives exactly the rectangular spectral index set $\Gamsquare$. 
The choice $\Omega = \{ \vect{\gamma} \in \Gamsquare \ | \ \gamma_1/m_1 + \gamma_2/m_2 > 1, \ \gamma_1/m_1 - \gamma_2/m_2 > 1 \}$ gives 
a triangular spectral index set $\Gamtriangle$ of the form
\begin{equation}\label{1508222042triangle}
\Gamtriangle = \left\{\,\vect{\gamma}\in \Kmm \left| \ \begin{array}{l} \gamma_1/m_1 + |\gamma_2|/m_2 < 1 \\ \gamma_1 + \gamma_2 \ \text{is even} \end{array} \right. \right\} \cup 
\left\{\,\vect{\gamma}\in \Gamsquare \left| \ \gamma_1/m_1 + |\gamma_2|/m_2 = 1 \right. \right\}.
\end{equation}
In the forthcoming applications we will mostly use the rectangular spectral index set $\Gamsquare$ or the triangular set $\Gamtriangle$. For the frequency parameter $\vect{m} = (5,3)$, these two spectral
index sets are shown in Figure \ref{fig:LS-3}.
\end{example}

\paragraph{\bf 5.4. Real basis systems} For computational issues it is convenient to have also a real orthogonal basis for the space $\funpol(\Imm)$ at disposition. For this, we define the subset
\begin{equation} \label{eq:201807211209b}
\Gamcomp = \left\{ \vect{\gamma} \in \Gam \ | \ (\gamma_1,-\gamma_2) \notin \Gam \right\}
\end{equation}
and the real valued discrete functions
\begin{equation}\label{1702291124}
\polbasreal = \left\{  \begin{array}{lll}
\cos(\gamma_{1} i_1 \pi/(2m_{1})) \cos(\gamma_{2} i_2 \pi/(2m_{2})),
                                        & \text{if $\vect{\gamma} \in \Gam \setminus \Gamcomp$, $\gamma_2 \geq 0$}, \; & (i)\\
\cos(\gamma_{1} i_1 \pi/(2m_{1})) \sin(\gamma_{2} i_2 \pi/(2m_{2})),
                                        & \text{if $\vect{\gamma} \in \Gam \setminus \Gamcomp$, $\gamma_2 < 0$},  & (ii)\\
\cos(\gamma_{1} i_1 \pi/(2m_{1})) \cos(\gamma_{2} i_2 \pi/(2m_{2})),
                                        & \text{if $\vect{\gamma} \in \Gamcomp$, $\gamma_1 \leq m_1$},  & (iii)\\
\cos(\gamma_{1} i_1 \pi/(2m_{1})) \sin(\gamma_{2} i_2 \pi/(2m_{2})),
                                        & \text{if $\vect{\gamma} \in \Gamcomp$, $\gamma_1 > m_1$}.  & (iv)
\end{array} \right.
\end{equation}

\begin{theorem}\label{1507091912}
Let $\Gam$ be a spectral index set according to Definition \ref{def:spectralindex}.
Then, the functions $\polbasreal$, $\vect{\gamma}\in \Gam$, form a real
orthogonal basis of the inner product space $(\funpol(\Imm),\langle\,\cdot,\cdot\,\rangle_{\mathrm{w}})$.
The norms of the basis functions $\polbasreal$ are given as
\begin{equation}\label{1508221826}
 \|\polbasreal\|_{\mathrm{w}}^2 =
 \left\{ \begin{array}{rl}     1\; & \text{if}\quad \vect{\gamma} \in \{(0,0), (2 m_{1},0), (0,2m_2) \}, \\[2mm]
                             \frac12\; & \text{if} \quad \begin{array}{l} \vect{\gamma} \in \Gam \setminus \{(0,0), (2 m_{1},0), (0,2m_2) \} \ \text{and} \\                               
                             \text{$\gamma_1 = 0$ or $\gamma_1 = 2 m_1$ or $\gamma_2 = 0$ or $\gamma_2 = 2m_2$}, \end{array} \\[4mm]
                             \frac12\; & \text{if}\quad \vect{\gamma} \in \{(m_1,m_2), (m_1,-m_2) \}, \\[2mm]
                             \frac14\; & \text{for all other $\vect{\gamma} \in \Gam$}.
\end{array} \right.
\end{equation}
\end{theorem}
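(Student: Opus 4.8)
The plan is to build the real basis directly from the complex orthogonal basis $\{\polbas \mid \vect{\gamma}\in\Gam\}$ supplied by Corollary~\ref{1507091911BB}, exploiting that every function in \eqref{1702291124} is exactly the real or the imaginary part of such a $\polbas$. First I would record the conjugation identity $\overline{\polbas} = \chi^{(\vect{m})}_{(\gamma_1,-\gamma_2)}$, which is immediate from \eqref{A1508291531} together with the $4m_2$-periodicity of $\vect{\gamma}\mapsto\polbas$ in its second component. Writing $\Re\polbas = \tfrac12(\polbas+\overline{\polbas})$ and $\Im\polbas = \tfrac1{2\imath}(\polbas-\overline{\polbas})$, one sees that cases (i),(iii) of \eqref{1702291124} produce $\Re\polbas$ and cases (ii),(iv) produce $\Im\polbas$ up to sign.

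The heart of the argument is to organise $\Gam$ into the blocks on which these parts live. If $\vect{\gamma}\in\Gam\setminus\Gamcomp$ with $\gamma_2\notin\{0,2m_2\}$, then both $\vect{\gamma}$ and $(\gamma_1,-\gamma_2)$ lie in $\Gam$ and index two distinct orthogonal complex basis functions of equal norm; the corresponding entries of \eqref{1702291124} are $\Re\polbas$ and $-\Im\polbas$, which span the same two-dimensional subspace and are mutually orthogonal, since for orthogonal $u,v$ of equal norm one has $\langle u+v,\,u-v\rangle_{\mathrm{w}}=0$. If instead $\vect{\gamma}\in\Gamcomp$, the conjugate partner is absent from $\Gam$, but by the glide-reflection identity \eqref{eq:glidereflection} and the flip \eqref{eq:201807201118} the function $\overline{\polbas}=\chi^{(\vect{m})}_{(\gamma_1,-\gamma_2)}$ is again a basis function $\chi^{(\vect{m})}_{\vect{\delta}}$ with $\vect{\delta}=(\gamma_1,-\gamma_2)^*\in\Gam$, whose first coordinate is $2m_1-\gamma_1$; thus the elements of $\Gamcomp$ pair up across $\gamma_1=m_1$ into blocks $\{\vect{\gamma},(\gamma_1,-\gamma_2)^*\}$, and the split $\gamma_1\le m_1$ versus $\gamma_1>m_1$ in cases (iii)--(iv) again selects $\Re\polbas$ and $\Im\polbas$ on each block. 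The remaining indices are those for which $\polbas$ is already real, namely $\gamma_2\in\{0,2m_2\}$ and the self-paired $(m_1,\pm m_2)$ (for which $\vect{\gamma}^*$ coincides with the conjugate $(\gamma_1,-\gamma_2)$); each contributes a single real function equal to $\polbas$ itself.

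Collecting the blocks, every $\polbasreal$ lies in the complex span of its own block, and distinct blocks are spanned by mutually orthogonal complex basis functions; hence $\{\polbasreal \mid \vect{\gamma}\in\Gam\}$ is orthogonal, and as it has the correct cardinality $\#\Gam=(2m_1+1)m_2$ it is an orthogonal basis of $(\funpol(\Imm),\langle\,\cdot,\cdot\,\rangle_{\mathrm{w}})$. For the norms I would use that on a genuine two-element block $\|\Re\polbas\|_{\mathrm{w}}^2=\|\Im\polbas\|_{\mathrm{w}}^2=\tfrac12\|\polbas\|_{\mathrm{w}}^2$, whereas on a real singleton $\|\polbasreal\|_{\mathrm{w}}^2=\|\polbas\|_{\mathrm{w}}^2$. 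Inserting the complex norms from Corollary~\ref{1507091911BB} (value $1$ when $\gamma_1\in\{0,2m_1\}$ and $\tfrac12$ otherwise) then reproduces \eqref{1508221826}: the doubly special real indices $(0,0),(2m_1,0),(0,2m_2)$ give $1$; the singly special ones and the self-paired $(m_1,\pm m_2)$ give $\tfrac12$; and every remaining index, where both a halving and $\gamma_1\notin\{0,2m_1\}$ occur, gives $\tfrac14$.

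The main obstacle is not the algebra of real and imaginary parts but the bookkeeping of the degenerate indices. One must verify that the Re/Im assignment in \eqref{1702291124} never selects a vanishing component: in particular, reading the conjugate $(\gamma_1,-\gamma_2)$ modulo $4m_2$ ensures that $\gamma_2=2m_2$ falls under case (i) rather than a vanishing case (iv), so the case split at $\gamma_1=m_1$ for $\Gamcomp$ stays consistent with the real indices $\gamma_2\in\{0,2m_2\}$. One must also account for the fact that some listed labels index the same function on $\Imm$ (for instance $\chi^{(\vect{m})}_{(2m_1,0)}=\chi^{(\vect{m})}_{(0,2m_2)}$ and $\chi^{(\vect{m})}_{(m_1,m_2)}=\chi^{(\vect{m})}_{(m_1,-m_2)}$), so that at most one representative of each can occur in a given $\Gam$. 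Identifying these degenerate blocks precisely, through the fixed-point analysis of the involution $\vect{\gamma}\mapsto(\gamma_1,-\gamma_2)^*$, is the one step where genuine care is required.
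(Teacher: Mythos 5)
Your argument is correct and arrives at the same conclusion as the paper, but by a genuinely different route. The paper's proof is computational throughout: it writes $\polbasreal$ as $\Re\,\polbas$ or $\Im\,\polbas$, establishes orthogonality ``in the same way as in the proof of Theorem \ref{1507091911}'' --- i.e., every product of two real basis functions is expanded via the product and conjugation formulas \eqref{1507222159} and \eqref{1507222159C} into a sum of functions $\chi^{(\vect{m})}_{\vect{\delta}}$, each of whose discrete integrals is evaluated with the vanishing criterion \eqref{1507201132} of Proposition \ref{1507211320} --- and it obtains the norms from the explicit six-term expansion of $\int \bigl|\chi^{(\vect{m})}_{(\gamma_1,\gamma_2)}\pm\chi^{(\vect{m})}_{(\gamma_1,-\gamma_2)}\bigr|^2\,\mathrm{d}\rule{1pt}{0pt}\mathrm{w}$. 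You instead take the complex orthogonality of Corollary \ref{1507091911BB} as the sole input and superimpose a block structure on $\Gam$: conjugate pairs $\{\vect{\gamma},(\gamma_1,-\gamma_2)\}$ outside $\Gamcomp$, flip-conjugate pairs $\{\vect{\gamma},(\gamma_1,-\gamma_2)^{*}\}$ inside $\Gamcomp$, and real singletons ($\gamma_2\in\{0,2m_2\}$ and the fixed points $(m_1,\pm m_2)$ of the involution). Within-block orthogonality is then the elementary identity $\langle u+v,u-v\rangle_{\mathrm{w}}=0$ for orthogonal equal-norm $u,v$, across-block orthogonality is inherited from the complex basis, and the entire table \eqref{1508221826} drops out of $\|\Re u\|_{\mathrm{w}}^2=\|\Im u\|_{\mathrm{w}}^2=\frac12\|u\|_{\mathrm{w}}^2$ on genuine pairs versus $\|u\|_{\mathrm{w}}^2$ on singletons, combined with \eqref{1508221825BB}. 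What the paper's route buys is uniformity: no structural analysis of $\Gam$ is needed, every inner product is simply computed. What yours buys is transparency: it explains \emph{why} exactly four norm values occur (singleton versus pair, crossed with $\gamma_1\in\{0,2m_1\}$ or not), and it treats the label coincidences such as $\chi^{(\vect{m})}_{(2m_1,0)}=\chi^{(\vect{m})}_{(0,2m_2)}$ and $\chi^{(\vect{m})}_{(m_1,m_2)}=\chi^{(\vect{m})}_{(m_1,-m_2)}$ structurally through the flip, where the paper is silent. Your mod-$4m_2$ reading of the conjugate index, which places $\gamma_2=2m_2$ in case (i) of \eqref{1702291124}, is moreover a necessary repair: read literally, an index such as $(2m_1,2m_2)\in\GamD$ (which occurs for $\Omega=\{(0,0)\}$) would fall into case (iv) and yield the zero function.

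One caveat, concerning precisely the step you flag as needing ``genuine care'': non-fixed blocks of $\Gamcomp$ can lie entirely on the line $\gamma_1=m_1$, and there the split (iii)/(iv) does \emph{not} separate real and imaginary parts. Concretely, for $\vect{m}=(2,4)$ and $\Omega=\{(2,-2)\}$ the set $\GamD$ contains both $(2,2)$ and $(2,6)$; both lie in $\Gamcomp$, both have $\gamma_1=m_1$, both are assigned case (iii), and on $\Imm$ they define the \emph{same} real function, so the family degenerates. Your fixed-point analysis of $\vect{\gamma}\mapsto(\gamma_1,-\gamma_2)^{*}$ is exactly the tool that detects this: the fixed points are only $(m_1,\pm m_2)$, but two-element blocks on $\gamma_1=m_1$ can occur for such exotic $\Omega$. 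This is a defect of the case split \eqref{1702291124} and of the theorem's full generality rather than of your strategy --- the paper's own one-line orthogonality argument glosses over the identical configuration --- and for the index sets actually used later ($\Gamsquare$, where $\Gamcomp_{\square}=\{\gamma_2=m_2\}$, and $\Gamtriangle$) no such block exists, so your proof goes through completely there. To be airtight in general you would either exclude these blocks by hypothesis or amend the assignment on $\gamma_1=m_1$ (e.g., split by the sign of $\gamma_2-m_2$ instead of by $\gamma_1$).
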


\begin{example}
\begin{enumerate}
\item[(i)] For the spectral index set $\Gamsquare$, we have $\Gamcomp_{\square} = \{ \vect{\gamma} \in \Gamsquare \ | \ \gamma_2 = m_2\}$. 
\item[(ii)] For the spectral index set $\Gamtriangle$, the set $\Gamcomp_{\triangle}$ is empty if $m_1 + m_2$ is odd. 
If $m_1+m_2$ is even, then $\Gamcomp_{\triangle} = \{(m_1,m_2)\}$. 
\end{enumerate}
\end{example}

\section{Spectral interpolation on the rhodonea nodes} \label{sec:interpolation}

 \begin{figure}[htb]
 	\centering
 	\subfigure[The basis $X_{\mathcal{R},\vect{\gamma}}$, $\vect{\gamma} \in \Gamtriangle$, $\vect{m} = (2,3)$.
 	]{\includegraphics[scale=0.2]{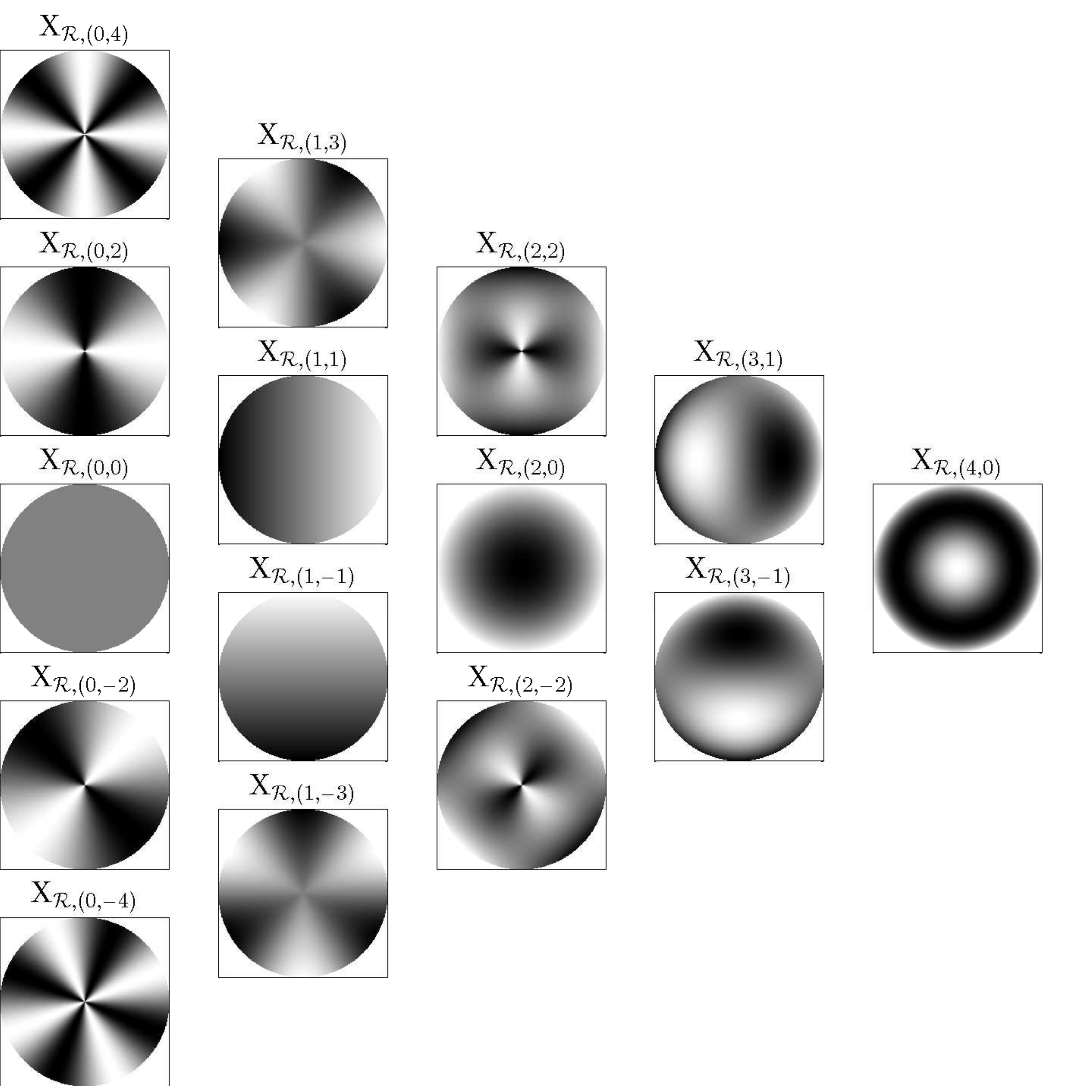}}
 	\hfill	
 	\subfigure[The basis $X_{\mathcal{R},\vect{\gamma}}$, $\vect{\gamma} \in \Gamsquare$, $\vect{m} = (2,3)$.
 	]{\includegraphics[scale=0.2]{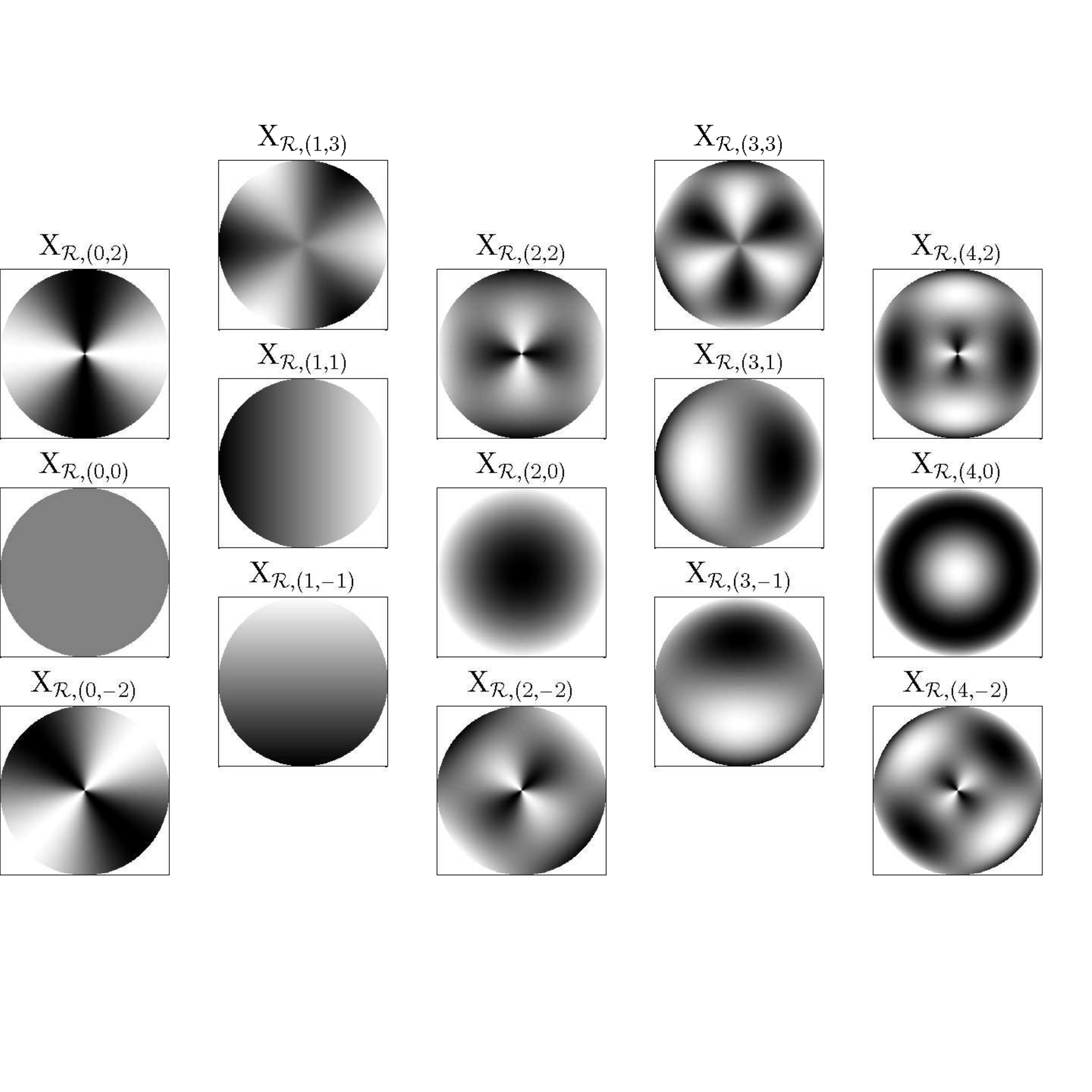}}
   	\caption{Illustration of the basis functions $\polbascont$ in the interpolation spaces $\Pimrealtriangle$ and $\Pimrealsquare$.
   	} \label{fig:LS-3B}
 \end{figure}

\paragraph{\bf 6.1. Formulation of the interpolation problem}
We formulate now the spectral interpolation problem on the disk $\Dd$ based on the rhodonea nodes $\LSm$ as interpolation nodes. As coordinate system, we will use polar coordinates $(r,\theta)$ in the domain $[0,1] \times (-\pi,\pi]$. 
According to the definitions in \eqref{eq:09172} and \eqref{eq:0917234},
the rhodonea nodes in polar coordinates are given as 
$(r^{(m_1)}_{i_1}, \theta^{(m_2)}_{i_2})$, $\vect{i} \in \Imm$. 

We generate the interpolation spaces using the \emph{Chebyshev-Fourier basis functions} $\polbascont$. 
For $\vect{\gamma} \in \Nn_0 \times \Zz$ and $(r,\theta) \in [0,1] \times (-\pi,\pi]$, these basis functions $\polbascont(r,\theta)$ are given by
\begin{equation}\label{A1508291123}
\polbascont(r,\theta) = T_{\gamma_1}(r) \mathrm{e}^{\imath \gamma_{2} \theta },
\end{equation}
where $T_{\gamma_1}(r) = \cos (\gamma_1 \arccos r)$ is the Chebyshev polynomial of first kind of degree $\gamma_1 \in \Nn_0$.
The space of all linear combinations of the functions $\polbascont$, $\vect{\gamma} \in \Nn_0 \times \Zz$,
is denoted by $\Pi$. 

Our aim is to solve the following interpolation problem: for given data values $f \in \funpol(\Imm)$ we want to obtain a spectral interpolant $ P^{(\vect{m})}_f \in \Pi$ such that
\begin{equation}\label{1508220011}
 P^{(\vect{m})}_f (r^{(m_1)}_{i_1}, \theta^{(m_2)}_{i_2}) = f({\vect{i}}) \quad \text{for all} \ \vect{i} \in \Imm.
\end{equation}

\paragraph{\bf 6.2. Unisolvence of spectral interpolation}
To obtain uniqueness in \eqref{1508220011}, we have to specify a proper subspace of $\Pi$. For this, 
we will use the key relation
\begin{equation}\label{1508201411}
\polbascont(r^{(m_1)}_{i_1}, \theta^{(m_2)}_{i_2}) = \polbas(\vect{i}), \qquad \vect{\gamma}\in\Nn_0 \times \Zz, \quad \vect{i}\in \Imm,
\end{equation}
between the Chebyshev-Fourier basis $\polbascont$ and the discrete basis $\polbas$ for the space $\funpol(\Imm)$.
From the previous section we know that $\polbas$, $\vect{\gamma} \in \Gam$, is an orthogonal 
basis of $\funpol(\Imm)$ if $\Gam$ is a spectral index set for $\Imm$. This turns spectral 
index sets $\Gam$ also to ideal index sets for the construction of the interpolation spaces. We define:
\[\Pim = \mathrm{span} \left\{\, \polbascont \,\left|\, \vect{\gamma} \in \Gam \right. \right\}.\]
By Definition \ref{def:spectralindex} of the spectral index set $\Gam$ the sum $\gamma_1 + \gamma_2$ is even. This parity condition ensures that the functions $P \in \Pim$ can be extended naturally onto $[-1,1] \times (-\pi,\pi]$ such that the 
continuous glide-reflection symmetry 
$P(-r,\theta) = P(r,\theta+\pi)$
is satisfied. For this reason, the basis $X_{\vect{\gamma}}$, $\vect{\gamma} \in \Gam$, is
also referred to as \emph{parity-modified Chebyshev-Fourier basis} of the space $\Pim$. 
Two such basis systems are illustrated in Figure \ref{fig:LS-3B}.

Data values $f \in \funpol(\Imm)$ obtained by sampling a continuous function on the disk are constant at all coordinates $\vect{i}$ with $i_1 = m_1$ representing 
the center of the unit disk. To take this fact into account, we additionally define the subspaces 
\[\funpols(\Imm) = \left\{f \in \mathcal{L}(\Imm): \, f(\vect{i}) = f_{\mathrm{C}} \ \text{is constant at $i_1 = m_1$} \right\}\]
and
\begin{equation}
\Pims = \left\{P \in \Pim \ | \ P(\theta^{(m_1)}_{i_1}, \vph^{(m_2)}_{i_2})) \equiv P(\theta^{(m_1)}_{j_1}, \vph^{(m_2)}_{j_2})) 
\quad \text{if} \; i_1 = j_1 = m_1 \  \right\}. \label{eq:186181745}
\end{equation}
We have $\funpols(\Imm) \subset \funpol(\Imm)$, $\Pims \subset \Pim$ and $\dim \funpols(\Imm) = \dim \Pims = \# \LSm$. 
The space $\funpols(\Imm)$ can naturally be used to describe all given data functions on the rhodonea
nodes $\LSm$. On the other side, the space $\Pims \subset \Pim$ contains exactly all $P \in \Pim$ such that the discrete data set $p(\vect{i}) = P(r^{(m_1)}_{i_1}, \theta^{(m_2)}_{i_2}))$, 
$\vect{i} \in \Imm$, is contained in $\funpols(\Imm)$. Although $P \in \Pims$ satisfies this discrete consistency condition at the center of $\Dd$, the
function $P \in \Pims$ is in general not constant on the entire line $r = 0$ describing the center. We will show in Section \ref{sec:convergence}
that for the particular interpolation space $\Pimssquare$ based on the index set $\Gamsquare$ we can guarantee the continuity of $P$ at the center.

\begin{remark}
In the literature on spectral spectral methods a tensor-product grid in polar
coordinates is usually used in place of the rhodonea points to build up collocation schemes on the unit disk. The underlying 
interpolation spaces spanned by a parity-modified Chebyshev-Fourier basis in a rectangular spectral set are similar to the spaces
$\Pimsquare$, see \cite{BoydYu2011,Fornberg1995,Shen2011,Trefethen2000,TownsendWilberWright2017}. 

In our considerations, the spaces $\Pimsquare$ play a dominant role as well. Nevertheless, also $\Pimtriangle$ has some interesting resemblances to spaces in other works.
For $m \in \Nn$ odd and $\vect{m} = (m-1,m)$, the interpolation space $\Pimtriangle$ is spanned by all parity-modified Chebyshev-Fourier basis functions
$\polbascont$ with total degree $|\gamma_1| + |\gamma_2| \leq m-1$. Because of this,
the rhodonea nodes $\LS^{(m-1,m)}$ can be regarded as polar version of
the Padua points studied in \cite{BosDeMarchiVianelloXu2006,CaliariDeMarchiVianello2005}. 
If $\vect{m} = (m,m)$, the points $\LSm$ and 
the interpolation space $\Pimtriangle$ provide a setup that is very similar to the one provided by the Morrow-Patterson-Xu points
\cite{Xu1996}. For general $\vect{m} \in \Nn^2$, the results obtained for the nodes $\LSm$ can be regarded as a polar version of the theory on polynomial interpolation on Lissajous nodes \cite{DenckerErb2017a,DenckerErb2015a,Erb2015,ErbKaethnerAhlborgBuzug2015} and on 
spherical Lissajous nodes \cite{ErbSphere2017}. 
\end{remark}

Our main result on spectral interpolation on the disk reads as follows:
\begin{theorem} \label{201512131945}
Let $f\in \funpol(\Imm)$ and $\Gam$ be a spectral index set according to Definition
\ref{def:spectralindex}. Then, the interpolation problem \eqref{1508220011} has a unique solution
in the space $\Pim$ given by
\begin{equation} \label{201513121708}
P^{(\vect{m})}_{f}(r,\theta) = \sum_{\vect{i} \in \Imm} f({\vect{i}}) \, L^{(\vect{m})}_{\vect{i}}(r,\theta)
\end{equation}
with the Lagrange functions 
\begin{equation} \label{1508220009}
 L^{(\vect{m})}_{\vect{i}}(r,\theta) = \mathrm{w}^{(\vect{m})}_{\vect{i}} \sum_{\vect{\gamma} \in \Gam} \frac{\overline{\polbas(\vect{i})}}{\|\polbas\|_{\mathrm{w}}^2}
 \polbascont(r,\theta), \qquad \vect{i} \in \Imm,
\end{equation}
forming a basis of the vector space $\Pim$.
For $f\in \funpols(\Imm)$, the interpolant $P^{(\vect{m})}_{f} \in \Pims$ is of the form
\begin{equation} \label{201513121700}
P^{(\vect{m})}_{f}(r,\theta) = \underset{\vect{i} \in \Imm: i_1 \neq m_1}{\sum} f({\vect{i}}) \, L^{(\vect{m})}_{\vect{i}}(r,\theta) + f_{\mathrm{C}} \underset{\vect{i} \in \Imm: i_1 = m_1}{\sum} L^{(\vect{m})}_{\vect{i}}(r,\theta).
\end{equation}
\end{theorem}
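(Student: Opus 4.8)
The plan is to transfer the discrete orthogonality on $\funpol(\Imm)$ to the continuous space $\Pim$ by means of the key relation \eqref{1508201411}, which identifies the restriction of each Chebyshev-Fourier basis function $\polbascont$ to the nodes with the discrete function $\polbas$. I would first record this relation: inserting $r = r^{(m_1)}_{i_1} = \cos(i_1\pi/(2m_1))$ and $\theta = \theta^{(m_2)}_{i_2}$ into $\polbascont(r,\theta) = T_{\gamma_1}(r)\,\mathrm{e}^{\imath\gamma_2\theta}$ and using $T_{\gamma_1}(\cos\phi) = \cos(\gamma_1\phi)$ reproduces $\polbas(\vect{i})$ verbatim.

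Since $\Gam$ is a spectral index set (Definition \ref{def:spectralindex}), the system $\{\polbas \mid \vect{\gamma}\in\Gam\}$ is an orthogonal basis of $\funpol(\Imm)$ with the nonzero norms recorded in Corollary \ref{1507091911BB}, so every $f\in\funpol(\Imm)$ admits the expansion $f = \sum_{\vect{\gamma}\in\Gam}\|\polbas\|_{\mathrm{w}}^{-2}\,\langle f,\polbas\rangle_{\mathrm{w}}\,\polbas$. Lifting each discrete $\polbas$ to its continuous counterpart $\polbascont$ defines a function $P^{(\vect{m})}_f\in\Pim$; evaluating $P^{(\vect{m})}_f$ at a node and applying \eqref{1508201411} turns the expansion back into that of $f$, so the interpolation conditions \eqref{1508220011} hold. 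Expanding $\langle f,\polbas\rangle_{\mathrm{w}}$ as the weighted sum over $\Imm$ and interchanging the two summations then rewrites $P^{(\vect{m})}_f$ in the Lagrange form \eqref{201513121708}, with the functions $L^{(\vect{m})}_{\vect{i}}$ of \eqref{1508220009}.

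For uniqueness I would work directly in $\Pim$. If $P = \sum_{\vect{\gamma}\in\Gam}c_{\vect{\gamma}}\polbascont \in \Pim$ vanishes at all nodes, then restricting to the nodes and using \eqref{1508201411} yields $\sum_{\vect{\gamma}\in\Gam}c_{\vect{\gamma}}\polbas = 0$ in $\funpol(\Imm)$; linear independence of the orthogonal system $\{\polbas\}$ forces every $c_{\vect{\gamma}} = 0$. This shows at once that the $\polbascont$, $\vect{\gamma}\in\Gam$, are linearly independent---hence a basis of $\Pim$---and that the nodal restriction map $\Pim\to\funpol(\Imm)$ is injective; together with the dimension identity $\#\Gam = \#\Imm = (2m_1+1)m_2$ from \eqref{eq:0917} it is therefore bijective, giving both existence and uniqueness and confirming that the Lagrange functions form a basis as claimed.

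Finally, for $f\in\funpols(\Imm)$ the value $f(\vect{i}) = f_{\mathrm{C}}$ is constant over the $m_2$ indices with $i_1 = m_1$; splitting the sum in \eqref{201513121708} along $i_1 = m_1$ and $i_1\neq m_1$ and factoring out $f_{\mathrm{C}}$ produces \eqref{201513121700}, and since the interpolant then reproduces a single constant at the center, its nodal data lie in $\funpols(\Imm)$, i.e.\ $P^{(\vect{m})}_f\in\Pims$. The computations are all routine; the point demanding care is the bookkeeping between the index set $\Imm$ and the geometry of $\LSm$---in particular that the $m_2$ indices with $i_1 = m_1$ all collapse to the center---so that uniqueness is genuinely a statement about the $\#\Imm$ conditions \eqref{1508220011} indexed by $\Imm$ and must be established in $\Pim$ rather than by a naive pointwise Lagrange argument on the disk.
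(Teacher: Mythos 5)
Your proposal is correct and takes essentially the same route as the paper's proof: both rest on the nodal identification \eqref{1508201411}, the discrete orthogonality of $\{\polbas \mid \vect{\gamma}\in\Gam\}$ from Definition \ref{def:spectralindex}, and the dimension count $\#\Gam = \#\Imm$. The only cosmetic difference is that the paper expands the Dirac functions $\delta_{\vect{j}}$ in the discrete basis to verify the Lagrange delta property of $L^{(\vect{m})}_{\vect{j}}$ and then deduces uniqueness from bijectivity of the map $f\mapsto P^{(\vect{m})}_{f}$, whereas you expand $f$ itself and prove uniqueness via injectivity of the nodal restriction map $\Pim\to\funpol(\Imm)$ --- the same argument viewed from the other side.
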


\paragraph{\bf 6.3. Real valued interpolation spaces}
In order to establish a similar interpolation result for real vector spaces, we 
define for $\vect{\gamma} \in \Gam$ the real basis functions
\begin{equation*}
\polbascontreal(r,\theta) = \left\{ \begin{array}{ll} T_{\gamma_{1}} (r) \cos (\gamma_{2} \theta ) & \text{if $\vect{\gamma} \in \Gam \setminus \Gamcomp$, $\gamma_2 \geq 0$}, \\
                                 T_{\gamma_{1}} (r) \sin (\gamma_{2} \theta ) & \text{if $\vect{\gamma} \in \Gam \setminus \Gamcomp$, $\gamma_2 < 0$}, \\
                                 T_{\gamma_{1}} (r) \cos (\gamma_{2} \theta ) & \text{if $\vect{\gamma} \in \Gamcomp$, $\gamma_1 \leq m_1$}, \\
                                 T_{\gamma_{1}} (r) \sin (\gamma_{2} \theta ) & \text{if $\vect{\gamma} \in \Gamcomp$, $\gamma_1 > m_1$}. \\
\end{array} \right.
\end{equation*}

Evaluating the functions $\polbascontreal$ at the polar nodes $(r^{(m_1)}_{i_1}, \theta^{(m_2)}_{i_2})$, we obtain precisely the discrete basis functions of the space $\funpol(\Imm)$ given in \eqref{1702291124}, i.e.
\begin{equation}\label{1508201413}
\polbascontreal(r^{(m_1)}_{i_1}, \theta^{(m_2)}_{i_2}) = \polbasreal(\vect{i}), \qquad \text{$\vect{\gamma}\in\Gam$, $\vect{i}\in \Imm$}.
\end{equation}
In the same way as for the basis function $\polbascont$ and the space $\Pim$, we can now introduce the real valued interpolation space
\[\Pimreal = \mathrm{span} \left\{\, \polbascontreal \,\left|\, \vect{\gamma} \in \Gam \right. \right\}\]
and obtain in analogy to Theorem \ref{201512131945} the following result:

\begin{theorem} \label{201512131946}
Let $f\in \funpol(\Imm)$ and $\Gam$ be a spectral index set according to Definition
\ref{def:spectralindex}. Then, the interpolation problem \eqref{1508220011} has a unique solution
in the real space $\Pimreal$ given by 
\begin{equation} \label{201513121708B}
P^{(\vect{m})}_{\mathcal{R},f}(r,\theta) = \sum_{\vect{i} \in \Imm} f({\vect{i}}) \, L^{(\vect{m})}_{\mathcal{R},\vect{i}}(r,\theta).
\end{equation}
with the Lagrange functions 
\begin{equation} \label{1508220009B}
 L^{(\vect{m})}_{\mathcal{R},\vect{i}}(r,\theta) = \mathrm{w}^{(\vect{m})}_{\vect{i}} 
 \sum_{\vect{\gamma} \in \Gam} \frac{\polbasreal(\vect{i})}{\|\polbasreal\|_{\mathrm{w}}^2}
 \polbascontreal(r,\theta), \qquad \vect{i} \in \Imm,
\end{equation}
forming a basis of the vector space $\Pimreal$. If $f\in \funpols(\Imm)$, the solution of the interpolation problem \eqref{1508220011} can be written as
\begin{equation} \label{201513121700B}
P^{(\vect{m})}_{\mathcal{R},f}(r,\theta) = \underset{\vect{i} \in \Imm: i_1 \neq m_1}{\sum} f({\vect{i}}) \, L^{(\vect{m})}_{\mathcal{R},\vect{i}}(r,\theta) + f_{\mathrm{C}} \underset{\vect{i} \in \Imm: i_1 = m_1}{\sum} L^{(\vect{m})}_{\mathcal{R},\vect{i}}(r,\theta).
\end{equation}
\end{theorem}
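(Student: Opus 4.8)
The plan is to reduce Theorem~\ref{201512131946} to the already-established complex version in Theorem~\ref{201512131945}, exploiting the fact that the real basis functions $\polbascontreal$ and the complex basis functions $\polbascont$ span the same interpolation space. First I would verify that $\Pimreal = \Pim$, or more precisely that $\mathrm{span}\{\polbascontreal \mid \vect{\gamma} \in \Gam\}$ and $\mathrm{span}\{\polbascont \mid \vect{\gamma} \in \Gam\}$ coincide (at least after allowing complex coefficients, with the real space being the real-coefficient version). This follows because each $\polbascontreal$ is obtained from the pair $\polbascont$ and $X_{(\gamma_1,-\gamma_2)}^{(\vect{m})}$ via the standard Euler relations $\cos(\gamma_2\theta) = \tfrac12(\mathrm{e}^{\imath\gamma_2\theta} + \mathrm{e}^{-\imath\gamma_2\theta})$ and $\sin(\gamma_2\theta) = \tfrac{1}{2\imath}(\mathrm{e}^{\imath\gamma_2\theta} - \mathrm{e}^{-\imath\gamma_2\theta})$, together with the bookkeeping encoded by the companion set $\Gamcomp$ in \eqref{eq:201807211209b} that tracks exactly when a frequency $\gamma_2$ appears without its negative partner $-\gamma_2$ in $\Gam$.

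Next I would transport the discrete orthogonality to the continuous setting. Theorem~\ref{1507091912} already states that $\{\polbasreal \mid \vect{\gamma} \in \Gam\}$ is a real orthogonal basis of $(\funpol(\Imm), \langle\,\cdot,\cdot\,\rangle_{\mathrm{w}})$ with the norms given in \eqref{1508221826}. Combining this with the sampling identity \eqref{1508201413}, namely $\polbascontreal(r^{(m_1)}_{i_1}, \theta^{(m_2)}_{i_2}) = \polbasreal(\vect{i})$, shows that the evaluation map $\Pimreal \to \funpol(\Imm)$, $P \mapsto (P(r^{(m_1)}_{i_1}, \theta^{(m_2)}_{i_2}))_{\vect{i} \in \Imm}$, sends the continuous basis onto the discrete orthogonal basis. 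Since $\#\Gam = (2m_1+1)m_2 = \#\Imm$ and the images form an orthogonal (hence linearly independent) system, this evaluation map is a bijection, which is exactly the unisolvence assertion.

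With unisolvence in hand, the interpolation formula \eqref{201513121708B} with the Lagrange functions \eqref{1508220009B} is then obtained by the standard reproducing-kernel computation: one checks that $L^{(\vect{m})}_{\mathcal{R},\vect{i}}$ satisfies the Lagrange property $L^{(\vect{m})}_{\mathcal{R},\vect{i}}(r^{(m_1)}_{j_1}, \theta^{(m_2)}_{j_2}) = \delta_{\vect{i},\vect{j}}$ at the nodes. This is verified by evaluating \eqref{1508220009B} at a node using \eqref{1508201413} and then applying the orthogonality relation from Theorem~\ref{1507091912}; the weights $\mathrm{w}^{(\vect{m})}_{\vect{i}}$ and the norming factors $\|\polbasreal\|_{\mathrm{w}}^2$ are precisely what make the resulting sum collapse to a Kronecker delta, just as in the proof of Theorem~\ref{201512131945}. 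Summing $f(\vect{i})$ against these Lagrange functions yields \eqref{201513121708B}.

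The final reduction to \eqref{201513121700B} for $f \in \funpols(\Imm)$ is a matter of splitting the sum over $\Imm$ into the terms with $i_1 \neq m_1$ and those with $i_1 = m_1$; on the latter the value $f(\vect{i}) = f_{\mathrm{C}}$ is constant by definition of $\funpols(\Imm)$, so it factors out, and the membership $P^{(\vect{m})}_{\mathcal{R},f} \in \Pimreal \cap \Pims$ follows from the consistency condition \eqref{eq:186181745} being inherited from the discrete constancy. I expect the main obstacle to be the case analysis in establishing $\Pimreal = \Pim$ cleanly across the four regimes of \eqref{1702291124}, in particular handling the boundary frequencies $\gamma_2 = 0$, $\gamma_2 = 2m_2$, and the self-flipped indices where $\gamma_2 = -\gamma_2 \bmod 4m_2$; these degenerate cases are exactly where the norm values in \eqref{1508221826} jump between $1$, $\tfrac12$, and $\tfrac14$, and care is needed to ensure the dimension count and the orthogonality bookkeeping remain consistent. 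Everything else parallels the complex argument already used for Theorem~\ref{201512131945}.
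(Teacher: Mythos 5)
Your opening step---and precisely what you flag as the ``main obstacle''---is not an obstacle but a false statement: $\Pimreal = \Pim$ fails in general, and the paper says so explicitly in the remark following Theorem \ref{201512131946} (``in the continuous setup \dots we generally have $\Pim \neq \Pimreal$''; equality holds for instance for $\Gamtriangle$ with $m_1+m_2$ odd, exactly because then $\Gamcomp_{\triangle}=\emptyset$). The Euler-relation bookkeeping you propose breaks down precisely on $\Gamcomp$: for $\vect{\gamma} \in \Gamcomp$ the partner index $(\gamma_1,-\gamma_2)$ lies outside $\Gam$ by definition \eqref{eq:201807211209b}. Concretely, for $\Gamsquare$ one has $\Gamcomp_{\square} = \{\vect{\gamma} \in \Gamsquare \mid \gamma_2 = m_2\}$, and the real basis function $T_{\gamma_1}(r)\cos(m_2\theta)$ carries the mode $\mathrm{e}^{-\imath m_2\theta}$, which no element of $\Pimsquare$ contains since $\gamma_2 = -m_2$ is excluded from $\Gamsquare$; conversely $T_{\gamma_1}(r)\,\mathrm{e}^{\imath m_2\theta}$ is not in the real span, because in cases (iii)--(iv) of \eqref{1702291124} only one of the cosine/sine partners is retained for each $\vect{\gamma}\in\Gamcomp$. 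Your hedge ``after allowing complex coefficients'' does not repair this: the complex spans differ in both directions. What makes the identification work \emph{discretely} is the glide-reflection symmetry \eqref{eq:glidereflection}, under which the missing mode agrees on the nodes with a shifted index in $\Gam$; that symmetry simply does not extend to the continuous functions $\polbascont$, and this is the entire reason the theorem must be proved within $\Pimreal$ rather than imported from Theorem \ref{201512131945}. In particular, the uniqueness assertion is a statement internal to $\Pimreal$ and cannot be transferred from $\Pim$.

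Fortunately, your remaining paragraphs never actually use the reduction: the argument you give there---the sampling identity \eqref{1508201413} maps the system $\polbascontreal$ onto the discrete orthogonal basis of Theorem \ref{1507091912}, whence linear independence, injectivity of the evaluation map, and bijectivity by the count $\#\Gam = \#\Imm = (2m_1+1)m_2$; then the Kronecker property of $L^{(\vect{m})}_{\mathcal{R},\vect{i}}$ via the discrete orthogonality and the weights $\mathrm{w}^{(\vect{m})}_{\vect{i}}$; then the split \eqref{201513121700B} for $f \in \funpols(\Imm)$---is correct, self-contained, and is in substance exactly the paper's proof, which handles Theorem \ref{201512131946} by running the proof of Theorem \ref{201512131945} verbatim with the basis pair $(\polbas,\polbascont)$ replaced by $(\polbasreal,\polbascontreal)$ (the paper phrases the Kronecker property as an expansion of the Dirac functions $\delta_{\vect{j}}$ in the orthogonal basis, which is the same computation). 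Deleting your first paragraph and the reduction framing leaves a valid proof matching the paper's.
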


\begin{remark}
In the discrete setting the basis systems $\polbas$ and $\polbasreal$, $\vect{\gamma} \in \Gam$ 
generate the same vector space $\funpol(\Imm)$. In the continuous setup this is no longer true and 
we generally have $\Pim \neq \Pimreal$. An example in which $\Pim$ coincides with $\Pimreal$ can be obtained for 
the triangular spectral index set $\Gamtriangle$. We get $\Pimtriangle = \Pimrealtriangle$ in the case that $m_1 + m_2$ is odd. This follows from the fact that in this case $\Gamcomp_{\triangle}$ is empty.
\end{remark}

\section{Efficient implementation of the interpolation algorithm} \label{sec:implementation}

\paragraph{\bf \ref{sec:implementation}.1. Calculation of the expansion coefficients}

An efficient way to calculate the interpolation polynomial $P^{(\vect{m})}_{f} \in \Pim$ from given data values $f \in \Imm$ is based on the expansion
\begin{equation} \label{20170303146} P^{(\vect{m})}_{f}(r,\theta) = \sum_{\vect{\gamma} \in \Gam} c_{\vect{\gamma}}(f) \polbascont(r,\theta). \end{equation}
Using this series expansion, $P^{(\vect{m})}_{f}$ can be evaluated once the coefficients $c_{\vect{\gamma}}(f)$ are calculated. Both steps, the calculation of the coefficients and the evaluation of the sum \eqref{20170303146} can be implemented by applying a discrete Fourier transform.
Theorem \ref{201512131945} and the definition $\eqref{1508220009}$ of the Lagrange basis functions 
provide us with the representation
\[P^{(\vect{m})}_{f}(r,\theta) = \sum_{\vect{\gamma} \in \Gam} \frac{1}{\|\polbas\|_{\mathrm{w}}^2} \left(\sum_{\vect{i} \in \Imm}
\mathrm{w}^{(\vect{m})}_{\vect{i}} f({\vect{i}}) \overline{\polbas(\vect{i})} \right) \polbascont(r,\theta).\]
Since the set $\{ \polbascont \ | \ \vect{\gamma} \in \Gam\}$ is a basis for $\Pim$, we have the identity
\[c_{\vect{\gamma}}(f) = \frac{1}{\|\polbas\|_{\mathrm{w}}^2} \left(\sum_{\vect{i} \in \Imm}
\mathrm{w}^{(\vect{m})}_{\vect{i}} f({\vect{i}}) \overline{\polbas(\vect{i})} \right) = \frac{1}{\|\polbas\|_{\mathrm{w}}^2}\,\ds \langle\;\! f,\polbas \rangle_{\mathrm{w}}.\]
This formula enables us to calculate the expansion coefficients $c_{\vect{\gamma}}(f)$ by a double Fourier transform on the finite abelian group $( \Zz / 4 m_1 \Zz) \times ( \Zz / 4 m_2 \Zz)$ identified with
\[ \Jmm = \{ \vect{i} \in \Zz^2 \ | \ -2m_1 < i_1 \leq 2 m_1, \ -2 m_2 < i_2 \leq 2 m_2 \}. \]
We consider $\Kmm$ as a subset of $\Jmm$ and the flip operator $\phantom{}^{*}$ introduced in \eqref{eq:201807201118} on $\Kmm$. Further, we introduce
a second reflection operator on $\Jmm$ by setting $\vect{i}^\dag = (- i_1 \mod 4m_1 , i_2 )$ for $\vect{i} \in \Jmm$. The two mappings allow us to extend $f \in \Imm$ symmetrically to $\Jmm$. We set
\begin{equation} \label{eq:g} g(\vect{i}) = \frac{1}{8 m_1m_2}  \left\{ \begin{array}{rl} f(\vect{i}), \quad  & \text{if}\; \vect{i}\in\Imm, \rule[-0.65em]{0pt}{1em}\\
f(\vect{i}^*), \quad  & \text{if}\; \vect{i}\in\Kmm, \; \vect{i}^*\in\Imm,
\rule[-0.65em]{0pt}{1em}\\
f(\vect{i}^\dag), \quad  & \text{if}\; \vect{i}\in\Jmm \setminus \Kmm, \; \vect{i}^\dag\in\Imm,
\rule[-0.65em]{0pt}{1em}\\
f(\vect{i}^{\dag *} ), \quad  & \text{if}\; \vect{i}\in\Jmm \setminus \Kmm, \; \vect{i}^{\dag *}\in\Imm,
\rule[-0.65em]{0pt}{1em}\\
       0, \quad
  & \text{otherwise}. \end{array} \right. \end{equation}
The coefficients $c_{\vect{\gamma}}(f)$, $\vect{\gamma} \in \Gam$ can now be obtained directly from the Fourier transform 
\[\hat{g}(\vect{\gamma}) =\sum_{\vect{i} \in \Jmm} g(\vect{i}) \mathrm{e}^{-\imath \gamma_{1} i_1 \pi/m_{1} } \mathrm{e}^{-\imath \gamma_{2} i_2 \pi/m_{2} } \quad \text{of the function $g$ on $\Jmm$.} \]
The relation between $\hat{g}(\vect{\gamma})$ and the coefficients $c_{\vect{\gamma}}(f)$ is herein given by
\begin{align} c_{\vect{\gamma}}(f)&=\ts \frac{1}{\|\polbas\|_{\mathrm{w}}^2}\,\ds \sum_{\vect{i} \in \Imm} \mathrm{w}^{\!(\vect{m})}_{\vect{i}} f(\vect{i}) \overline{\polbas(\vect{i})} 
 = \ts \frac{1}{\|\polbas\|_{\mathrm{w}}^2}\,\ds \sum_{\vect{i} \in \Jmm} g(\vect{i}) \mathrm{e}^{-\imath \gamma_{1} i_1 \pi/m_{1} } \mathrm{e}^{-\imath \gamma_{2} i_2 \pi/m_{2} } \notag \\
  &= \left\{ \begin{array}{rl} 2 \hat{g}(\vect{\gamma}),  & \text{if}\, \vect{\gamma}\in\Gam, \, \gamma_1 \notin \{0,2m_1\},
\rule[-0.65em]{0pt}{1em}\\ \hat{g}(\vect{\gamma}),  & \text{if}\,  \vect{\gamma}\in\Gam, \, \gamma_1 \in \{0,2m_1\}. \end{array} \right. \label{20170304}
\end{align}
The entire calculation of the coefficients is summarized in Algorithm \ref{algorithm1}. The main
computational step in Algorithm \ref{algorithm1} consists in the calculation of the Fourier transform 
$\hat{g}$. By using a fast Fourier algorithm this step can be executed in $\Ord(m_1m_2 \log (m_1m_2))$ arithmetic operations. The values of $\|\polbas\|_{\mathrm{w}}^2$ 
used in \eqref{20170304} are known from \eqref{1508221825}.

\begin{remark}
The symmetry of the function $g$ can be seen as a combination of a reflection and a glide reflection symmetry. The reflection
symmetry corresponds to the invariance of $g$ under the reflection operator $\vect{i}^{\dag}$ on $\Jmm$, while the glide reflection 
symmetry is described by the operator $\vect{i}^{*}$ on the subset $\Kmm$. In \cite{TownsendWilberWright2016,TownsendWilberWright2017}, 
this glide reflection symmetry is referred to as block-mirror centrosymmetric (BMC) structure. 
\end{remark}

\renewcommand{\algorithmcfname}{Alg.}
\setlength{\algomargin}{1mm}
\begin{table}
\begin{multicols}{2}
\begin{algorithm}[H] \label{algorithm1}

\vspace{2mm}

\KwIn{$f \in \Imm$}

\vspace{1mm}

\textbf{Calculate} $g \in \Jmm$ from $f$ by \eqref{eq:g}  

\vspace{1mm}

\textbf{Calculate} Fourier transform $\hat{g}$ of $g$ on the group $\Jmm$

\vspace{1mm}

For $\vect{\gamma}\in\Gam$, \textbf{set} \[c_{\vect{\gamma}}(f)
  = \left\{ \begin{array}{rl} 2 \hat{g}(\vect{\gamma}),  & \text{if} \, \gamma_1 \notin \{0,2m_1\},
\rule[-0.65em]{0pt}{1em}\\ \hat{g}(\vect{\gamma}),  & \text{if}\, \gamma_1 \in \{0,2m_1\}. \end{array} \right. 
\]

\vspace{-3mm}

\caption{Calculation of coefficients}
\end{algorithm}

\columnbreak

\begin{algorithm}[H] \label{algorithm2}

\vspace{2mm}

\KwIn{$c_{\vect{\gamma}}(f)$ on $\Gam$}

\vspace{1mm}

\textbf{Calculate} $h \in \Jmm$ from $c_{\vect{\gamma}}(f)$ by  \eqref{eq:h}  

\vspace{1mm}

\textbf{Calculate} adjoint Fourier transform $\check{h}$ of $h$ on the (dual) group $\Jmm$

\vspace{1mm}

For $\vect{i}\in\Imm$, \textbf{set} \[ f(\vect{i}) = \check{h}(\vect{i}).\]

\vspace{4.5mm}

\caption{Inverse transform}
\end{algorithm}

\end{multicols}
\caption{Algorithms for the calculation of the expansion coefficients and the inverse transform.}
\end{table}

\paragraph{\bf \ref{sec:implementation}.2. The inverse transform}
From a known set of coefficients $c_{\vect{\gamma}}(f)$, $\vect{\gamma} \in \Gam$, we can reversely reconstruct the function values $f \in \mathcal{L}(\Imm)$. This inverse transform is also 
determined by a discrete Fourier transform. Combining the interpolation condition \eqref{1508220011} 
with the expansion \eqref{20170303146}, we get
\begin{equation*}
 f({\vect{i}}) = P^{(\vect{m})}_f (r^{(m_1)}_{i_1}, \theta^{(m_2)}_{i_2}) = 
 \sum_{\vect{\gamma} \in \Gam} c_{\vect{\gamma}}(f) \polbas(\vect{i}) \quad \text{for all}\quad \vect{i} \in \Imm.
\end{equation*}
As in the previous section, we extend the coefficients $c_{\vect{\gamma}}(f)$ first symmetrically to the (dual) group $\Jmm$ and define the function $h$ on $\Jmm$ as 
\begin{equation} \label{eq:h} h(\vect{\gamma}) =  \left\{ \begin{array}{rl} 2 c_{\vect{\gamma}}(f), \quad  & \text{if}\; \vect{\gamma}\in\Gam, \, \gamma_1 \notin \{0,2 m_1\},
\rule[-0.65em]{0pt}{1em}\\
2 c_{\vect{\gamma}}(f), \quad  & \text{if}\; (-\gamma_1,\gamma_2)\in\Gam, \, \gamma_1 \notin \{0,2 m_1\},
\rule[-0.65em]{0pt}{1em}\\ 
 c_{\vect{\gamma}}(f), \quad  & \text{if}\; \vect{\gamma}\in\Gam, \, \gamma_1 \in \{0,2m_1\},
\rule[-0.65em]{0pt}{1em}\\ 
       0, \quad
  & \text{otherwise}. \end{array} \right. \end{equation}
This definition together with the definition \eqref{A1508291531} of the discrete basis functions $\polbas$ yields
\begin{equation*}
 f({\vect{i}}) = \sum_{\vect{\gamma} \in \Jmm} h(\vect{\gamma}) \mathrm{e}^{\imath \gamma_{1} i_1 \pi/m_{1} } \mathrm{e}^{\imath \gamma_{2} i_2 \pi/m_{2} }  =  \check{h}(\vect{i})
 \quad \text{for}\; \vect{i} \in \Imm.
\end{equation*} 
Therefore, the function $f \in \mathcal{L}(\Imm)$ can be recovered from the coefficients $c_{\vect{\gamma}}(f)$ by computing the adjoint Fourier transform $\check{h}$ of 
$h$ on $\Jmm$. The single steps of the calculation are summarized in Algorithm \ref{algorithm2}. 
As for Algorithm \ref{algorithm1}, the entire inverse transform can be computed 
in $\Ord(m_1m_2 \log (m_1m_2))$ arithmetic operations.

\paragraph{\bf \ref{sec:implementation}.3. Calculation of real expansion coefficients}
When working with the real basis $\polbascontreal$, $\vect{\gamma} \in \Gam$, the expansion coefficients $c_{\mathcal{R},\vect{\gamma}}(f)$ of the interpolant $P^{(\vect{m})}_{\mathcal{R},f}$
can be computed in a similar way. Using the formula
\[c_{\mathcal{R},\vect{\gamma}}(f)=\ts \frac{1}{\|\polbasreal\|_{\mathrm{w}}^2}\,\ds \langle\;\! f,\polbasreal \rangle_{\mathrm{w}},\]
the coefficients can be rewritten as
\begin{equation*}
c_{\mathcal{R},\vect{\gamma}}(f) = \frac{1}{\|\polbasreal\|_{\mathrm{w}}^2} \left\{ \! \begin{array}{ll}
\Re \hat{g}(\vect{\gamma})
                                        & \text{if $\vect{\gamma} \in \Gam \setminus \Gamcomp$, $\gamma_2 \geq 0$}, \\
- \Im \hat{g}(\vect{\gamma})
                                        & \text{if $\vect{\gamma} \in \Gam \setminus \Gamcomp$, $\gamma_2 < 0$}, \\
\Re \hat{g}(\vect{\gamma})
                                        & \text{if $\vect{\gamma} \in \Gamcomp$, $\gamma_1 \leq m_1/2$},\\
- \Im \hat{g}(\vect{\gamma})
                                        & \text{if $\vect{\gamma} \in \Gamcomp$, $\gamma_1 > m_1/2$}.
\end{array} \right.
\end{equation*}
This formula can be derived as in \eqref{20170304} by using the real basis $\polbasreal$ in \eqref{1702291124} instead of the complex-valued functions $\polbas$.
The values $\|\polbasreal\|_{\mathrm{w}}^2$ are explicitly known from \eqref{1508221826} and the calculation of the Fourier transform $\hat{g}(\vect{\gamma})$ is the same as in Section \ref{sec:implementation}.1.

\paragraph{\bf \ref{sec:implementation}.4. Averaged expansion coefficients}
A further option to alter the structure of the interpolation spaces is to use, for some of the indices $\vect{\gamma} \in \Gam$ ,
the averaged basis functions $X_{\mathcal{A},\vect{\gamma}} = \lambda X_{\vect{\gamma}} + (1 - \lambda) X_{\vect{\gamma^*}}$ with $\lambda \in \Rr$ instead of the standard basis functions $X_{\vect{\gamma}}$.
Since the discrete basis functions $\polbas$, $\vect{\gamma} \in \Gam$, are invariant under the flip operator $\phantom{}^{*}$, we have 
\[X_{\vect{\gamma}}(r_{i_1}^{(m_1)},\theta_{i_2}^{(m_2)}) = \polbas(\vect{i}) = X_{\mathcal{A},\vect{\gamma}}(r_{i_1}^{(m_1)},\theta_{i_2}^{(m_2)}).\]
Therefore, while both basis systems define different interpolation spaces, they both lead to the same interpolation problem on the rhodonea nodes and the expansion coefficients are identical. 
Interpolation spaces with such an averaging for some of the boundary elements
of the spectral index set were originally studied in the bivariate setting for the Morrow-Patterson-Xu points in \cite{Harris2010,Xu1996}. For multivariate interpolation on Lissajous-Chebyshev nodes, this
averaging process is studied in more detail in \cite{DenckerErb2017a}.

\begin{example} An alternative to the spectral index set $\Gamsquare$ in the implementation of the interpolation scheme is to
use the more symmetric set (compare the definition \eqref{1508222042base} of $\Gamsquare$)
\[ \Gamsupsquare = \left\{\,\vect{\gamma}\in \Kmm \left| \ - m_2 \leq \gamma_{2} \leq m_{2}, \ 
\gamma_1 + \gamma_2 \ \text{is even}
\right.
\right\}\]
and to average the basis functions for the coefficients $\vect{\gamma}$, $|\gamma_2| = m_2$ at the
upper and lower boundary of $\Gamsupsquare$ with $\lambda = 1/2$. In this way we get an interpolation function of the form
\[P_{\mathcal{A},f}^{(\vect{m})}(r,\theta) = \sum_{\vect{\gamma} \in \Gamsupsquare} c_{\mathcal{A},\vect{\gamma}}(f) X_{\vect{\gamma}},\]
in which the coefficients are given as
\[ c_{\mathcal{A},\vect{\gamma}}(f) = \left\{ \begin{array}{ll} c_{\vect{\gamma}}(f)/2 & \text{if} \ \vect{\gamma} \in \Gamsupsquare, \ |\gamma_2| = m_2, \\
c_{\vect{\gamma}}(f) & \text{if} \ \vect{\gamma} \in \Gamsupsquare, \ |\gamma_2| \neq m_2.  \end{array}\right.\]
The coefficients $c_{\vect{\gamma}}(f)$ itself are calculated as in \eqref{20170304}. For the real basis functions $X_{\mathcal{R},\vect{\gamma}}$ similar averaging strategies are of course also possible.   
\end{example}

\section{Properties of the spectral interpolation scheme} \label{sec:convergence}

Goal of this section is to provide a convergence analysis of the presented interpolation scheme 
and to answer questions typically considered in approximation theory and
numerical analysis. This includes quadrature possibilities, the behavior of the
interpolating functions at the center of the unit disk, as well as the numerical condition
and the convergence of the interpolation scheme if the number of nodes gets large. In general, these properties depend on the geometric form of the spectral index set $\Gam$. To obtain more concrete results, we will restrict our studies mainly to the two particular spectral sets $\Gamsquare$ and $\Gamtriangle$.  

The interpolating functions $P^{(\vect{m})}_{f}$ considered in this section are determined by data values $f$ that are obtained from the samples of a continuous function on the disk. For a continuous function $\ff(r,\theta)$ on $\Dd$ in polar coordinates, we have the relation
\begin{equation} \label{eq:186181822}
f(\vect{i}) = \ff(r^{(m_1)}_{i_1}, \theta^{(m_2)}_{i_2}) \quad \text{for}\quad \vect{i} \in \Imm.
\end{equation}
In particular, $f \in \funpols(\Imm)$ and Theorem \ref{201512131945} ensures that we obtain a unique interpolant $P^{(\vect{m})}_{f}$ in $\Pims \subset \Pim$ that interpolates the function $\ff$ at 
the rhodonea nodes $\LSm$.

\paragraph{\bf \ref{sec:convergence}.1. Continuity at the center} 
When using polar coordinates $(r,\theta) \in [0,1] \times [-\pi,\pi]$ to describe a continuous function $\ff$ on $\Dd$ we have to add the usual topological identifications for the polar 
coordinates. We can describe the space of all continuous functions on $\Dd$ by 
\[ C(\Dd) = \left\{\ff \in C([0,1]\times[-\pi,\pi]) \ \left| \ \begin{array}{lll} \mathrm{(i)} & \ff(r,-\pi) = \ff(r,\pi), & 0 \leq r \leq 1, \\
                                                                       \mathrm{(ii)} & \ff(0,\theta_1) = \ff(0,\theta_2),   & -\pi \leq \theta_1,\theta_2 \leq \pi. \end{array}
                                                                       \right. \right\} \]
Not all basis functions $\polbascont$, $\vect{\gamma} \in \Gam$, are contained in $C(\Dd)$. While $\polbascont \in C([0,1]\times[-\pi,\pi])$
and the periodicity $\mathrm{(i)}$ are always satisfied, the continuity $\mathrm{(ii)}$ at the center holds only true if $\gamma_1$ and $\gamma_2$ are odd. 
Therefore, also for the interpolant $P^{(\vect{m})}_{f}$ we can in general not expect that the continuity $\mathrm{(ii)}$ is satisfied. However, if 
the rectangular spectral index set $\Gamsquare$ is used for the interpolation space, we can guarantee that $P^{(\vect{m})}_{f}$ is continuous also at the center. 

\begin{theorem} 
\label{th:201807231722}
Let $\ff \in C(\Dd)$ and $P^{(\vect{m})}_{f}$ be the unique interpolant in the space $\Pimsquare$ according to Theorem \ref{201512131945}. Then,
 $P^{(\vect{m})}_{f} \in C(\Dd)$. 
\end{theorem}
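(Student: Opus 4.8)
The plan is to evaluate the interpolant directly on the central line $r=0$ and show it is independent of the angle $\theta$. Writing $P^{(\vect{m})}_{f}(r,\theta) = \sum_{\vect{\gamma}\in\Gamsquare} c_{\vect{\gamma}}(f)\,T_{\gamma_1}(r)\,\mathrm{e}^{\imath\gamma_2\theta}$ and grouping by angular frequency, I would introduce the radial parts
\[
R_{\gamma_2}(r) = \sum_{\gamma_1:\,(\gamma_1,\gamma_2)\in\Gamsquare} c_{(\gamma_1,\gamma_2)}(f)\,T_{\gamma_1}(r),
\]
so that $P^{(\vect{m})}_{f}(r,\theta)=\sum_{\gamma_2} R_{\gamma_2}(r)\,\mathrm{e}^{\imath\gamma_2\theta}$. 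Each basis function is a polynomial in $r$ times a $2\pi$-periodic function of $\theta$, so $P^{(\vect{m})}_{f}\in C([0,1]\times[-\pi,\pi])$ and the periodicity condition (i) hold automatically; the only nontrivial point is continuity (ii) at the center, which amounts to proving $R_{\gamma_2}(0)=0$ for every $\gamma_2\neq 0$.

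For odd $\gamma_2$ the parity condition forces $\gamma_1$ odd, whence $T_{\gamma_1}(0)=\cos(\gamma_1\pi/2)=0$ and $R_{\gamma_2}(0)=0$ trivially. The real work is the case of even $\gamma_2\neq 0$, for which all even $\gamma_1\in\{0,2,\ldots,2m_1\}$ occur in $\Gamsquare$. Substituting $c_{\vect{\gamma}}(f)=\|\polbas\|_{\mathrm{w}}^{-2}\sum_{\vect{i}\in\Imm}\mathrm{w}^{(\vect{m})}_{\vect{i}}\,f(\vect{i})\,\overline{\polbas(\vect{i})}$ and interchanging the order of summation gives
\[
R_{\gamma_2}(0)=\sum_{\vect{i}\in\Imm}\mathrm{w}^{(\vect{m})}_{\vect{i}}\,f(\vect{i})\,\mathrm{e}^{-\imath\gamma_2 i_2\pi/(2m_2)}\,\Sigma(i_1),\qquad \Sigma(i_1)=\sum_{\gamma_1\ \mathrm{even}}\frac{T_{\gamma_1}(0)\cos(\gamma_1 i_1\pi/(2m_1))}{\|\polbas\|_{\mathrm{w}}^{2}},
\]
a purely radial kernel independent of $\gamma_2$.

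The key step, and the main obstacle, is to show that $\Sigma$ is a discrete delta concentrated at the center index $i_1=m_1$. Using $T_{2k}(0)=(-1)^k$, the norms \eqref{1508221825} (which halve the endpoint weights $\gamma_1\in\{0,2m_1\}$), and the identity $(-1)^k\cos(k i_1\pi/m_1)=\cos(k(i_1+m_1)\pi/m_1)$, the kernel becomes a symmetric cosine sum with halved endpoints that folds into a complete set of $2m_1$-th roots of unity,
\[
\Sigma(i_1)=\sum_{k=0}^{2m_1-1}\mathrm{e}^{\imath k(i_1+m_1)\pi/m_1},
\]
which vanishes unless $i_1+m_1\equiv 0\pmod{2m_1}$. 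Over the range $0\le i_1\le m_1$ this forces $\Sigma(i_1)=2m_1\,\delta_{i_1,m_1}$. It is precisely here that the rectangular structure is essential: the presence of every even $\gamma_1$ up to $2m_1$ is what makes the radial kernel reproduce the center, and this is exactly what a general spectral index set cannot guarantee.

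It remains to combine this with the hypothesis $\ff\in C(\Dd)$, i.e. $f\in\funpols(\Imm)$, so that $f(\vect{i})=f_{\mathrm{C}}$ is constant on all indices with $i_1=m_1$. With $\Sigma(i_1)=2m_1\,\delta_{i_1,m_1}$ and $\mathrm{w}^{(\vect{m})}_{(m_1,i_2)}=1/(2m_1m_2)$, only the central indices survive and
\[
R_{\gamma_2}(0)=\frac{f_{\mathrm{C}}}{m_2}\sum_{i_2}\mathrm{e}^{-\imath\gamma_2 i_2\pi/(2m_2)},
\]
the sum running over the $m_2$ equally spaced central indices $i_2$. For even $\gamma_2$ this is a geometric sum of $m_2$-th roots of unity, which vanishes exactly when $\gamma_2\not\equiv 0\pmod{2m_2}$, and hence for every even $\gamma_2\neq 0$ with $-m_2<\gamma_2\le m_2$. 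Therefore $R_{\gamma_2}(0)=0$ for all $\gamma_2\neq 0$, so $P^{(\vect{m})}_{f}(0,\theta)=R_0(0)=f_{\mathrm{C}}$ is constant in $\theta$ and $P^{(\vect{m})}_{f}\in C(\Dd)$.
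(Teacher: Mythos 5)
Your proposal is correct, and its core mechanism is genuinely different from the paper's. Both proofs begin the same way: continuity reduces to condition $\mathrm{(ii)}$ at the center, and the parity condition kills every odd angular frequency at $r=0$ since $T_{\gamma_1}(0)=0$ for odd $\gamma_1$. From there the paper argues abstractly: $P^{(\vect{m})}_{f}(0,\theta)$ lies in the $m_2$-dimensional span of $\mathrm{e}^{\imath\gamma_2\theta}$ with even $\gamma_2\in\{-m_2+1,\ldots,m_2\}$, the interpolation conditions at the $m_2$ center nodes force the value $f_{\mathrm{C}}$ at $m_2$ equispaced angles modulo $\pi$, and uniqueness of trigonometric interpolation at equispaced nodes identifies $P^{(\vect{m})}_{f}(0,\cdot)$ with the constant $f_{\mathrm{C}}$. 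You instead compute the trace at $r=0$ explicitly through the coefficient formula, and your two kernel evaluations check out: with $T_{2k}(0)=(-1)^k$, the endpoint normalization from \eqref{1508221825}, and $(-1)^k\cos(k i_1\pi/m_1)=\cos(k(i_1+m_1)\pi/m_1)$, the half-endpoint cosine sum unfolds to $\sum_{k=0}^{2m_1-1}\mathrm{e}^{\imath k(i_1+m_1)\pi/m_1}$, which equals $2m_1\,\delta_{i_1,m_1}$ on $0\le i_1\le m_1$ (the imaginary part cancels under $k\leftrightarrow 2m_1-k$ together with $\sin((i_1+m_1)\pi)=0$), and the angular sum over the $m_2$ central indices (spacing $2$ in $i_2$, parity $i_2\equiv m_1\bmod 2$) is a geometric sum that vanishes for even $\gamma_2\not\equiv 0\pmod{2m_2}$, hence for every even $\gamma_2\neq 0$ in the band $-m_2<\gamma_2\le m_2$. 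The trade-off: the paper's argument is shorter and recycles the already-proved interpolation conditions, but it leans on the (standard, there unproved) unisolvence of equispaced trigonometric interpolation and is slightly imprecise about the center angles when $m_1$ is odd (they are shifted by $\pi/(2m_2)$, which leaves the argument intact); your computation is self-contained, pinpoints exactly where the rectangular structure of $\Gamsquare$ enters --- the full even range $0\le\gamma_1\le 2m_1$ produces the radial delta, and the band $-m_2<\gamma_2\le m_2$ keeps the nonzero even frequencies off the lattice $2m_2\Zz$ --- and in fact proves slightly more: for arbitrary data $f\in\funpol(\Imm)$ the trace $P^{(\vect{m})}_{f}(0,\cdot)$ depends only on the central samples $f(m_1,\cdot)$, so it is constant precisely when $f\in\funpols(\Imm)$.
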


\begin{remark} \label{rem:AB1}
Note that Theorem \ref{th:201807231722} does in general not hold true if $\Gamsquare$ is replaced by a different spectral index set $\Gam$. In general, the trigonometric polynomial $P^{(\vect{m})}_{f}(0,\theta)$ (see also \eqref{eq:1807221941} in the proof of Theorem \ref{th:201807231722}) is contained in a space of dimension larger than $m_2$ and the $m_2$ given boundary conditions can not guarantee that $P^{(\vect{m})}_{f}(0,\theta)$ is constant. Also for real interpolation spaces $\Pimrealsquare$ a careful view at the boundary conditions is necessary. For the interpolation
space $\Pimrealsquare$, the statement of Theorem \ref{th:201807231722} holds only true if $m_2$ is odd. 
\end{remark}

\begin{remark} Although Theorem \ref{th:201807231722} establishes that, at least in the case of the rectangular set $\Gamsquare$, the interpolant $P^{(\vect{m})}_{f}$ is continuous, we can not expect higher order smoothness of $P^{(\vect{m})}_{f}$ at the center of $\Dd$. In Section
\ref{sec:convergence}.3. we will see that these singularities of $P^{(\vect{m})}_{f}$ at the center have no influence on the global convergence of the interpolation scheme. If the function $\ff$ is sufficiently smooth, the derivatives of $P^{(\vect{m})}_{f}$ will approximately satisfy the continuity condition $\mathrm{(ii)}$ as soon as the node set $\LSm$ gets large. 
In \cite{Boyd1978}, such a property is referred to as natural boundary condition. For a lot of applications it is sufficient if such a natural boundary condition is satisfied. A deeper discussion about the behavior of spectral methods at coordinate singularities can be found in \cite{Boyd2000,HMS2002,Shen2011}. 
\end{remark}

\paragraph{\bf \ref{sec:convergence}.2. Numerical condition of the interpolation scheme}
The Lebesgue constant of the interpolation problem \eqref{1508220011} is defined as the operator norm
\[\Lambda^{(\vect{m})}_{\Omega} = \sup_{\|\ff\|_{\infty} \leq 1} \|P^{(\vect{m})}_{f}\|_{\infty}, \quad \text{with} \;\| \ff \|_{\infty} = \sup_{(r,\theta)} |\ff(r,\theta)|. \]
In numerical analysis, this constant is interpreted as the absolute condition number of the interpolation problem \eqref{1508220011}.
It gives an upper bound on how a small error in the function $\ff$ affects the corresponding interpolant $P^{(\vect{m})}_{f}$ in the uniform norm. 
Beside the distribution of nodes $\LSm$, the Lebesgue constant depends on the geometric structure of the spectral index set $\Gam$. For the rectangular spectral set
$\Gamsquare$, we can guarantee that this Lebesgue constant grows only slowly if $m_1$ and $m_2$ get large.
\begin{theorem} \label{thm:lebesgueconstant} For the interpolation space $\Pimsquare$, the Lebesgue constant $\Lambda^{(\vect{m})}$ is bounded by
\[ \Lambda^{(\vect{m})}_{\square} \leq C_{\square} \ln (m_1+1) \ln (m_2+1)\] 
with a constant $C_{\square}$ independent of $\vect{m}$. 
\end{theorem}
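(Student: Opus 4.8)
The plan is to pass from the operator norm to the associated Lebesgue function and to bound the latter by a product of two one-dimensional, logarithmically growing Lebesgue constants. First I would note that for $\ff \in C(\Dd)$ with $\|\ff\|_\infty \le 1$ the sampled values satisfy $|f(\vect{i})| \le 1$, so the representation \eqref{201513121708} together with the triangle inequality gives
\[ |P^{(\vect{m})}_f(r,\theta)| \le \sum_{\vect{i} \in \Imm} |L^{(\vect{m})}_{\vect{i}}(r,\theta)| =: \lambda^{(\vect{m})}_\square(r,\theta), \]
and hence $\Lambda^{(\vect{m})}_\square \le \max_{(r,\theta)} \lambda^{(\vect{m})}_\square(r,\theta)$; the collapse of all indices with $i_1=m_1$ onto the center only makes this triangle bound an overestimate, which is harmless for an upper bound. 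Inserting the explicit Lagrange functions \eqref{1508220009} and the norms \eqref{1508221825}, each $L^{(\vect{m})}_{\vect{i}}$ becomes a Dirichlet-type kernel $\mathrm{w}^{(\vect{m})}_{\vect{i}}\sum_{\vect{\gamma}\in\Gamsquare}\varepsilon_{\vect{\gamma}}\,\overline{\polbas(\vect{i})}\,\polbascont(r,\theta)$ with $\varepsilon_{\vect{\gamma}}\in\{1,2\}$, so the whole analysis reduces to estimating this kernel.

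The conceptual backbone is the glide-reflection symmetry $P(-r,\theta)=P(r,\theta+\pi)$ already used to define $\Pimsquare$: it identifies the interpolation on the rhodonea nodes with the parity-even (glide-reflection invariant) part of a genuine tensor-product Chebyshev--Fourier interpolation on $[-1,1]\times\Tt$, namely Chebyshev interpolation in the radial variable at the abscissae $r^{(m_1)}_{i_1}$ and trigonometric interpolation in the angular variable at equidistant nodes. Technically, I would realize this by writing the parity constraint as the character average $\mathbf{1}[\gamma_1+\gamma_2 \text{ even}] = \tfrac12\bigl(1+(-1)^{\gamma_1+\gamma_2}\bigr)$. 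Summing over $\Gamsquare$ then splits the kernel into two terms, each of which factorizes into a radial sum over $\gamma_1\in\{0,\dots,2m_1\}$ of Chebyshev polynomials $T_{\gamma_1}$ (a Chebyshev Dirichlet kernel) and an angular sum over $\gamma_2\in\{-m_2+1,\dots,m_2\}$ of exponentials (a Fourier Dirichlet kernel); the factor $(-1)^{\gamma_1+\gamma_2}$ in the second term only shifts the angle by $\pi$ and reflects $r\mapsto -r$, which is absorbed by the symmetry.

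With this factorization, the node sum defining $\lambda^{(\vect{m})}_\square$ separates: the weight $\mathrm{w}^{(\vect{m})}_{\vect{i}}$ and the parity of $\vect{i}$ in $\Imm$ let me rewrite $\sum_{\vect{i}\in\Imm}\mathrm{w}^{(\vect{m})}_{\vect{i}}|\cdots|$ as a product of a sum over the radial index $i_1\in\{0,\dots,m_1\}$ and a sum over the angular index $i_2$. Each factor is precisely a one-dimensional Lebesgue function, and I would bound it by the classical logarithmic estimates: the Lebesgue constant of Chebyshev (Clenshaw--Curtis) interpolation at the corresponding Chebyshev--Lobatto abscissae is $\Ord(\ln(m_1+1))$, and that of trigonometric interpolation at equidistant nodes is $\Ord(\ln(m_2+1))$. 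Taking the maximum over $(r,\theta)$ and multiplying yields $\Lambda^{(\vect{m})}_\square \le C_\square \ln(m_1+1)\ln(m_2+1)$.

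I expect the main obstacle to be the bookkeeping that makes the tensor-product factorization exact rather than merely heuristic: the endpoint normalizations $\varepsilon_{\vect{\gamma}}$ (the factor $2$ except at $\gamma_1\in\{0,2m_1\}$), the half-open angular range with its single boundary frequency $\gamma_2=m_2$, and the non-uniform node weights $\mathrm{w}^{(\vect{m})}_{\vect{i}}$ (which halve at $i_1=0$) must all be matched so that each one-dimensional factor is genuinely the Dirichlet kernel of a standard scheme whose logarithmic Lebesgue constant is known. Once these normalizations are aligned, the two classical bounds and the product structure do the rest; verifying that alignment, and that the parity average does not produce cross terms which spoil the separation of the node sum, is where the real work lies.
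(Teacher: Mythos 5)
Your proposal is correct and follows essentially the same route as the paper's proof: a parity decomposition of $\Gamsquare$ into the tensor-product pieces $\Gamsquarezero$ and $\Gamsquareone$ (your character average $\tfrac12\bigl(1+(-1)^{\gamma_1+\gamma_2}\bigr)$ realizes the same splitting), factorization of the resulting kernels into radial and angular Dirichlet kernels, and logarithmic bounds on each univariate factor. The one step you defer to ``alignment of normalizations'' is exactly where the paper does its work: it symmetrizes the node sum to the full product grid $\Jmm$ (decoupling the parity-linked indices $i_1,i_2$ of $\Imm$) and then applies the Marcinkiewicz--Zygmund inequality twice to pass from the discrete kernel sums to the two classical univariate Lebesgue-constant integrals, rather than identifying the discrete factors directly with standard one-dimensional interpolation schemes.
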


\begin{remark}
A similar logarithmic estimate holds true for the Lebesgue constant 
$\Gamma^{(\vect{m})}_{\triangle}$ when the spectral index set $\Gamsquare$ is replaced by the triangular set $\Gamtriangle$. In this case, techniques developed in \cite[Section 2]{DEKL2017} can be used 
to obtain the estimates of the resulting double integrals. For a similar setting on the unit sphere, the 
respective proof can be found in \cite{ErbSphere2017}. 
\end{remark}

\paragraph{\bf \ref{sec:convergence}.3. Convergence of the interpolation scheme}

Once an estimate for the Lebesgue constant is known, the convergence of $P^{(\vect{m})}_{f}$ towards
$\ff \in C(\Dd)$ can be established easily if the underlying function $\ff$ is smooth. Further, we obtain better rates of convergence the smoother the function $\ff$ is. This is a general principle 
for spectral methods in a variety of settings \cite{Boyd2000,Trefethen2000}. For multivariate polynomial interpolation in the hypercube $[-1,1]^{\mathsf{d}}$ similar results for Lissajous sampling nodes can be found in \cite{DEKL2017,Erb2015}. For spherical Lissajous nodes a respective derivation is given in \cite{ErbSphere2017}. Similar error estimates for a tensor product spectral collocation scheme on the sphere can also be found in \cite{Ganesh1998}.

We consider $P^{(\vect{m})}_{f}$ in the interpolation
space $\Pimsquare$. If $P^*$ denotes the best possible approximation of $\ff$ in 
$\Pimsquare$ the uniform error
$\|\ff - P^{(\vect{m})}_{f} \|_\infty$ can be bounded by
\begin{align*} \|\ff - P^{(\vect{m})}_{f} \|_\infty &\leq \|\ff - P^* \|_\infty + \| P^* - P^{(\vect{m})}_{f} \|_\infty \\ & \leq (\Lambda^{(\vect{m})}_{\square} + 1) \|f - P^* \|_\infty  
= (C_{\square}+1) \ln (m_1+1) \ln (m_2+1)  \|\ff - P^* \|_\infty.
\end{align*}
In the second estimate, we used the fact that the interpolation operator $\ff \to P^{(\vect{m})}_{f}$ reproduces $P^* \in \Pimsquare$ together with the estimate of the Lebesgue constant in Theorem \ref{thm:lebesgueconstant}. If $\ff$ is $s$ times continuously differentiable in $\Dd$, 
we can estimate the best error $\|\ff - P^* \|_\infty$ with help of a multivariate Jackson inequality for trigonometric functions (see \cite[Section 5.3]{Timan1960}). In this way, we get the error bound
\begin{equation} \label{eq:1806211617} \|\ff - P^{(\vect{m})}_{f}\|_\infty \leq C_{\ff,s} \ln(m_1+1) \ln(m_2+1) \left( \frac{1}{m_1^s} + \frac{1}{m_2^s}\right).\end{equation}
The constant $C_{\ff,s}$ is independent of $\vect{m}$. Thus, if $\ff$ is smooth, this estimate guarantees a fast uniform convergence of the interpolant towards $\ff$ if $m_1$ and $m_2$ get large. 

\paragraph{\bf \ref{sec:convergence}.4. Quadrature formula on the rhodonea nodes} \label{sec:cc}

In order to formulate a Clenshaw-Curtis quadrature rule for the rhodonea points $\LSm$, only 
the expansion \eqref{20170303146} and the explicit integration of 
the basis functions $\polbascont \in \Pim$ over the disk $\Dd$ are necessary.  
In polar coordinates the area element on $\Dd$ is given by $r \, \mathrm{d} r \mathrm{d} \theta$. The tensor product structure of the Chebyshev-Fourier basis functions $\polbascont$ then yields the formula
\[ \frac{1}{\pi} \int_0^{2\pi} \int_0^{1} \polbascont(r,\theta) r \, \mathrm{d} r \mathrm{d} \theta = \left\{ \begin{array}{ll}
 \frac{1}{1-\gamma_1^2/4} & \text{if $\gamma_2 = 0$ and $\gamma_1 \in 4 \Zz$,} \\ 0 & \text{otherwise}.\end{array} \right.\]
For this formula, we used the identities $\int_{0}^{2\pi} e^{\imath \gamma_2 \theta} \mathrm{d} \theta = 2 \pi \delta_{\gamma_2,0}$ and
$\int_0^{1} T_{\gamma_1} (r) r \mathrm{d} r = \frac12 \frac{1}{1-\gamma_1^2/4}$ if $\gamma_1$ is an element of $4 \Zz$ and zero otherwise.
With the expansion \eqref{20170303146} of $P^{(\vect{m})}_{f}$ in the space $\Pimsquare$, we obtain the Clenshaw-Curtis quadrature formula
\[ Q(f) = \frac{1}{\pi} \int_0^{2\pi} \int_0^{1} P^{(\vect{m})}_{f}(r,\theta) r \, \mathrm{d} r \mathrm{d} \theta = \sum_{k = 0}^{\lfloor
m_1 /2 \rfloor} \frac{c_{(4k,0)}(f)}{1 - 4 k^2}.  \]
The coefficients $c_{(4k,0)}(f)$ on the right hand side depend only on the data values $f(\vect{i})$, $\vect{i} \in \Imm$, and, therefore by \eqref{eq:186181822}, on the function samples given at the rhodonea nodes $\LSm$. The quadrature rule $Q(f)$ is exact for all functions in $\Pimsquare$. Since $c_{(k,0)}(f) = c_{\mathcal{R},(k,0)}(f)$, the same formula holds also true using $P^{(\vect{m})}_{\mathcal{R},f}$ as an interpolation function. The quadrature formula 
$Q(f)$ remains also the same if we use the triangular spectral index set $\Gamtriangle$ instead of $\Gamsquare$.

\begin{figure}[!htb]
\centering
	
\subfigure[\scriptsize Interpolant $P_{\mathcal{R},f}^{(10,11)}$ using 
	$\vect{\Gamma}_{\triangle}^{(10,11)}$. \newline
	$\| P_{\mathcal{R},f}^{(10,11)}\! -\!f \|_\infty \! \approx  \! 0.29348027296549 $ \newline \vspace{1mm}
	$Q(f) \approx  0.03901168892218$ \newline \vspace{1mm}
	$\frac{|Q(f) - I(f)|}{|I(f)|} \approx  0.02355870104964$.
	]{\includegraphics[scale=0.14]{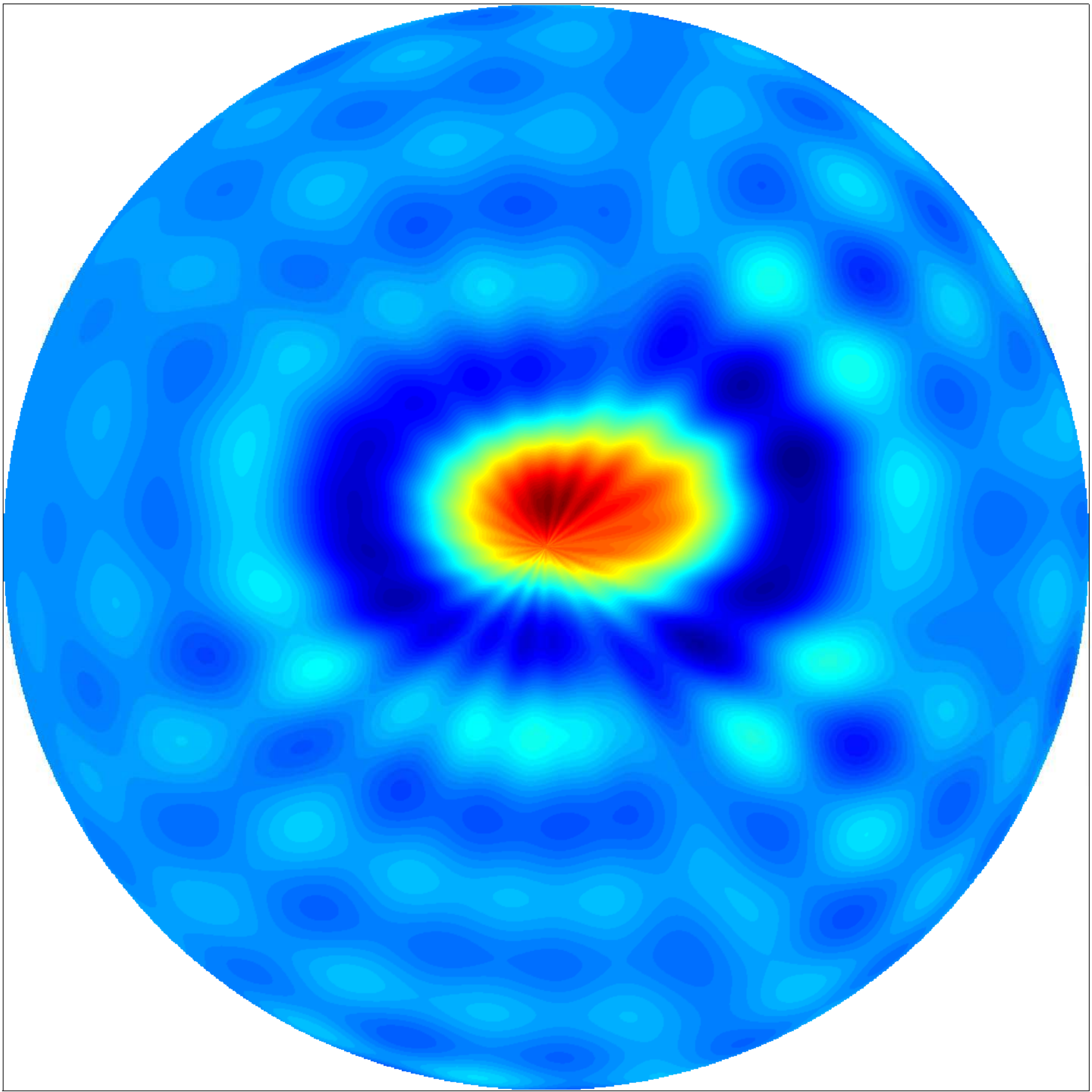}}
\hfill	
\subfigure[\scriptsize Interpolant $P_{\mathcal{R},f}^{(20,21)}$ using 
	$\vect{\Gamma}_{\triangle}^{(20,21)}$. \newline
	$\| P_{\mathcal{R},f}^{(20,21)}\! -\!f \|_\infty \! \approx \! 0.01459069689457$ \newline \vspace{1mm}
	$Q(f) \approx  0.03811412971653 $ \newline \vspace{1mm}
	$\frac{|Q(f) - I(f)|}{|I(f)|} \approx  0.00000923267162$.
	]{\includegraphics[scale=0.14]{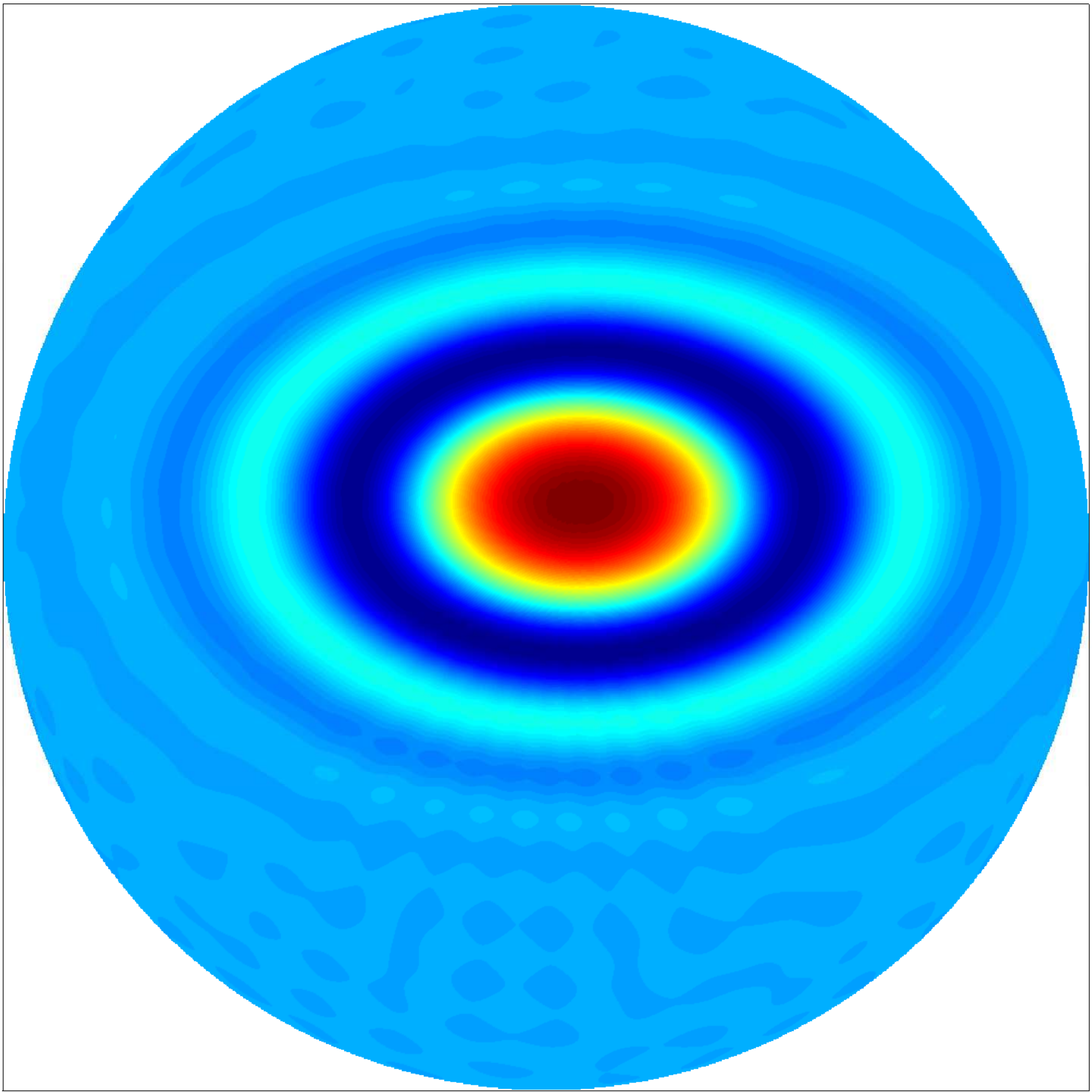}}
\hfill
\subfigure[\scriptsize Interpolant $P_{\mathcal{R},f}^{(30,31)}$ using 
	$\vect{\Gamma}_{\triangle}^{(30,31)}$. \newline
	$\| P_{\mathcal{R},f}^{(30,31)}\! -\!f \|_\infty \! \approx  \! 0.00003902453899$ \newline \vspace{1mm}
	$Q(f) \approx  0.03811377781358$ \newline \vspace{1mm}
	$\frac{|Q(f) - I(f)|}{|I(f)|} \approx  0.00000000028748$
	]{\includegraphics[scale=0.139]{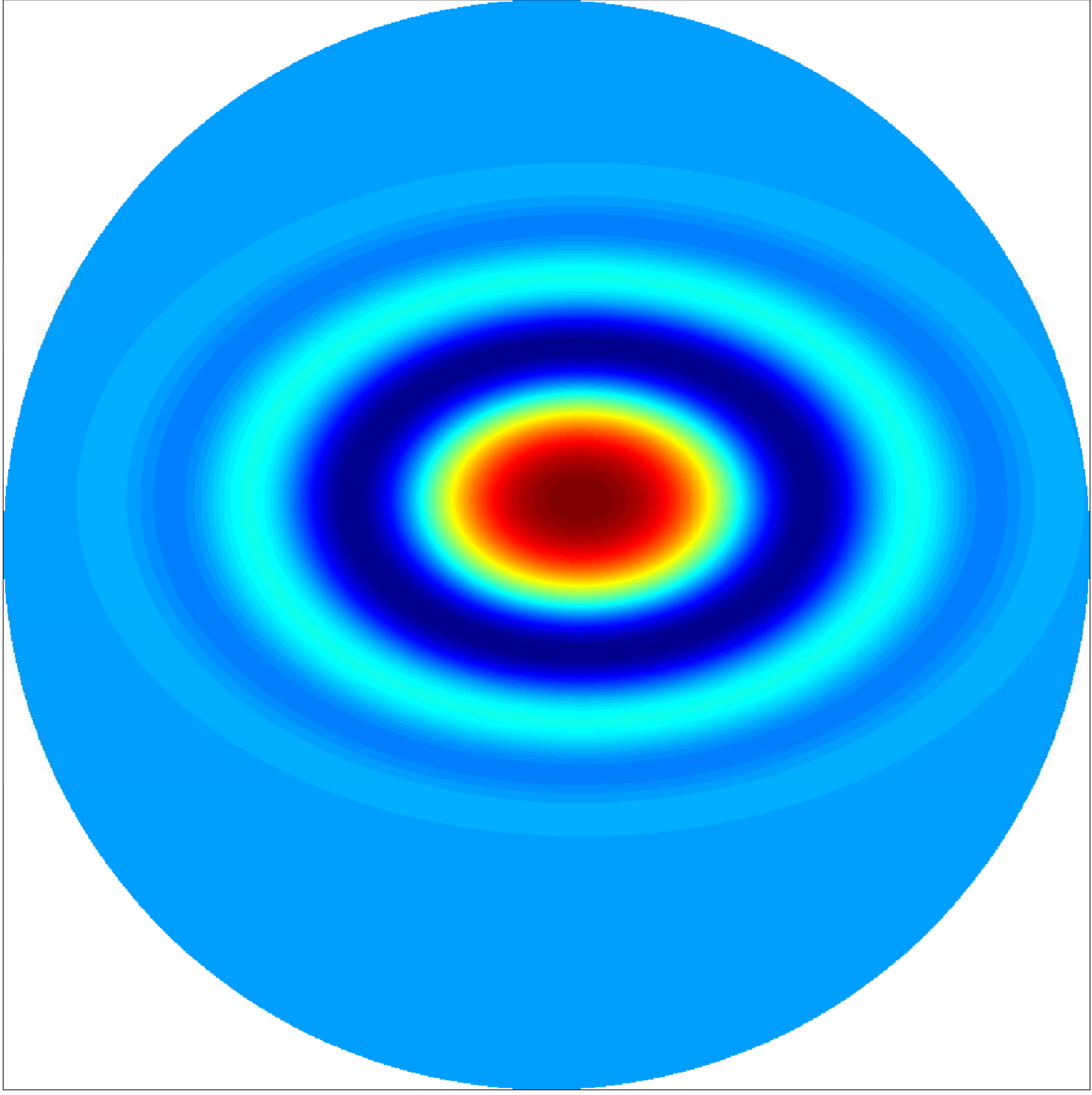}}

\subfigure[\scriptsize Interpolant $P_{\mathcal{R},f}^{(10,11)}$ using 
	$\vect{\Gamma}_{\square}^{(10,11)}$. \newline
	$\| P_{\mathcal{R},f}^{(10,11)}\! -\!f \|_\infty \! \approx  \! 0.28652455823358$ \newline \vspace{1mm}
	$Q(f) \approx  0.03901168892218$ \newline \vspace{1mm}
	$\frac{|Q(f) - I(f)|}{|I(f)|} \approx  0.02355870104964$.
	]{\includegraphics[scale=0.14]{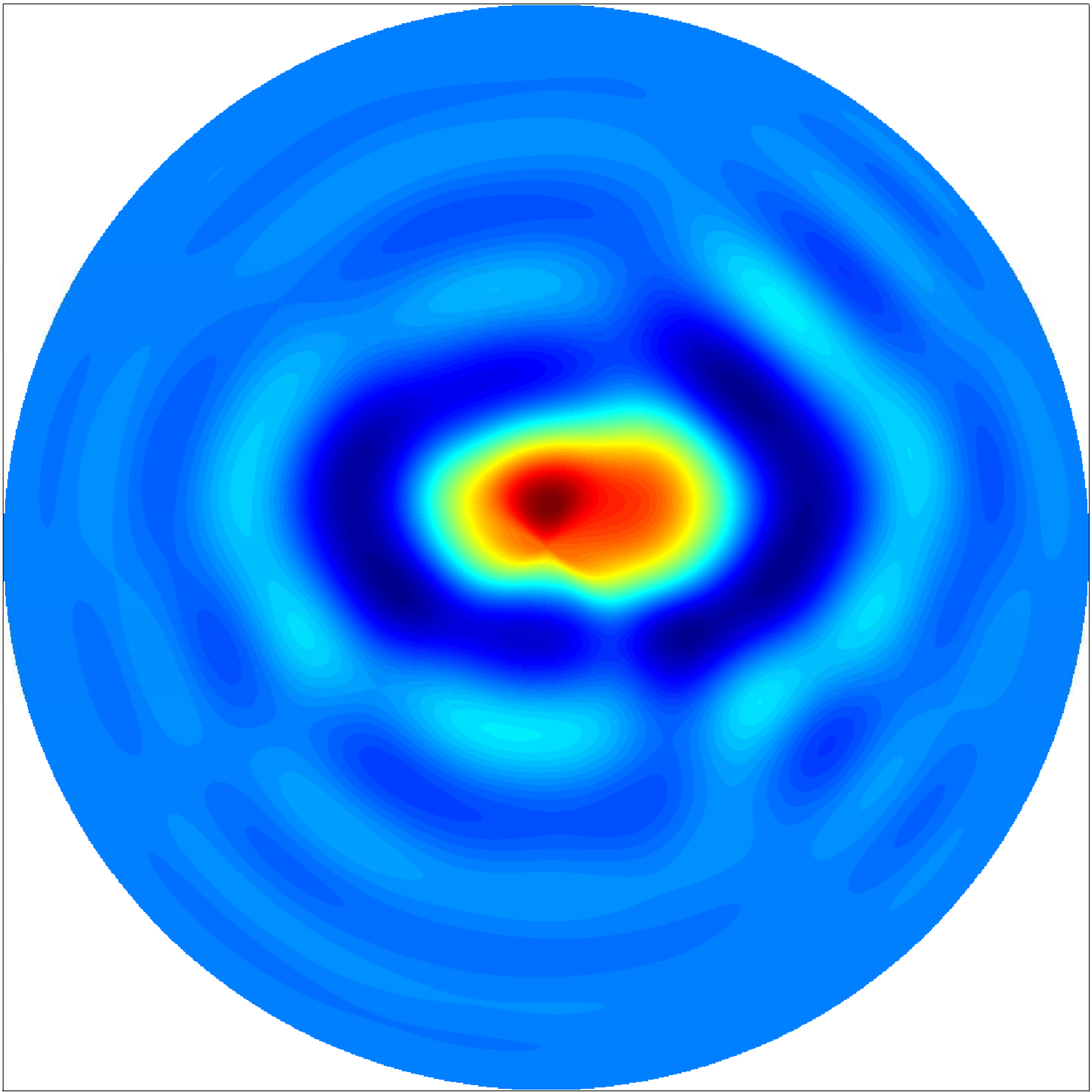}}
\hfill	
\subfigure[\scriptsize Interpolant $P_{\mathcal{R},f}^{(20,21)}$ using 
	$\vect{\Gamma}_{\square}^{(20,21)}$. \newline
	$\| P_{\mathcal{R},f}^{(20,21)}\! -\!f \|_\infty \! \approx \! 0.00410290500954$ \newline \vspace{1mm}
	$Q(f) \approx  0.03811412971653 $ \newline \vspace{1mm}
	$\frac{|Q(f) - I(f)|}{|I(f)|} \approx  0.00000923267162$.
	]{\includegraphics[scale=0.14]{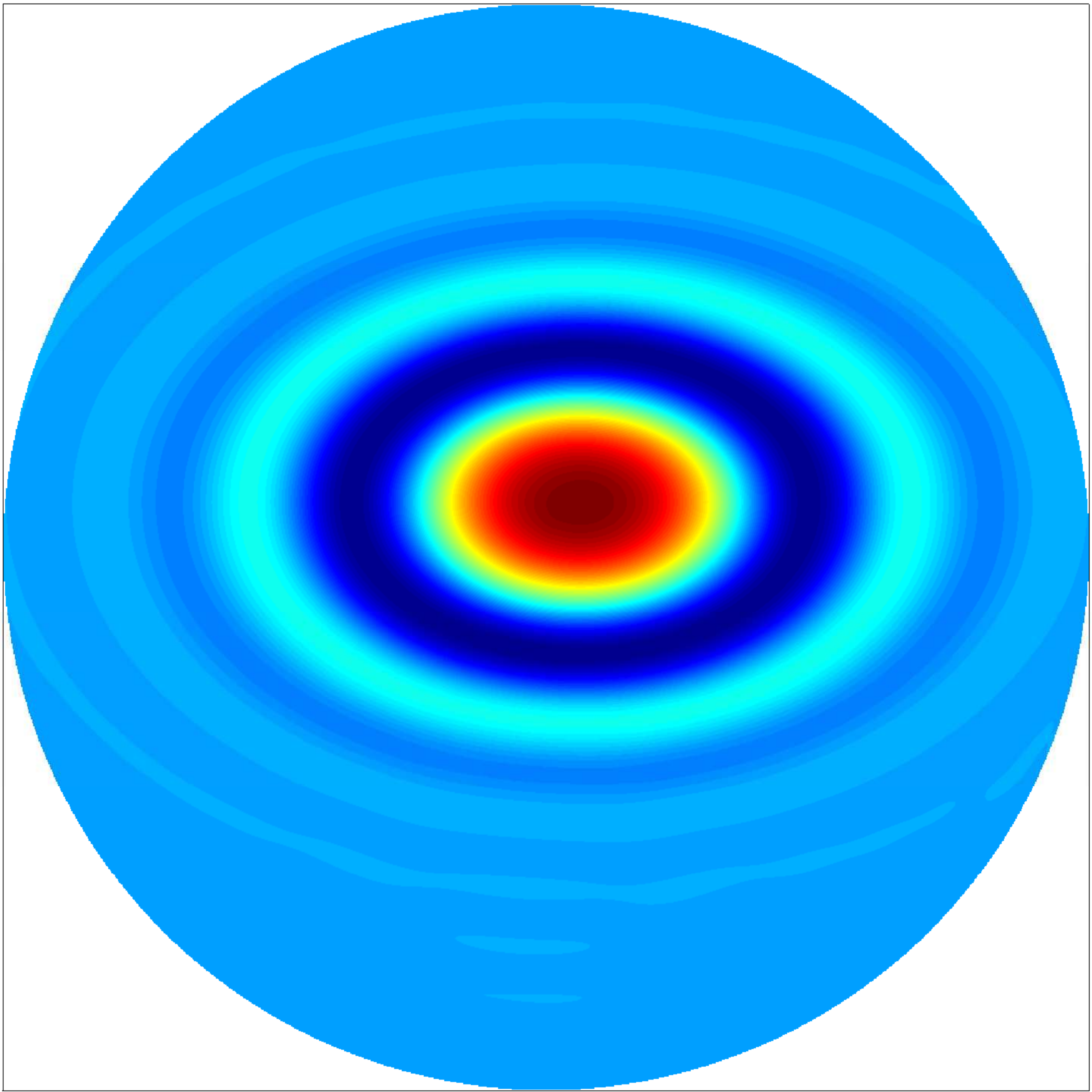}}
\hfill
\subfigure[\scriptsize Interpolant $P_{\mathcal{R},f}^{(30,31)}$ using 
	$\vect{\Gamma}_{\square}^{(30,31)}$. \newline
	$\| P_{\mathcal{R},f}^{(30,31)}\! -\!f \|_\infty \! \approx  \! 0.00004734909880$ \newline \vspace{1mm}
	$Q(f) \approx  0.03811377781358$ \newline \vspace{1mm}
	$\frac{|Q(f) - I(f)|}{|I(f)|} \approx  0.00000000028748$
	]{\includegraphics[scale=0.139]{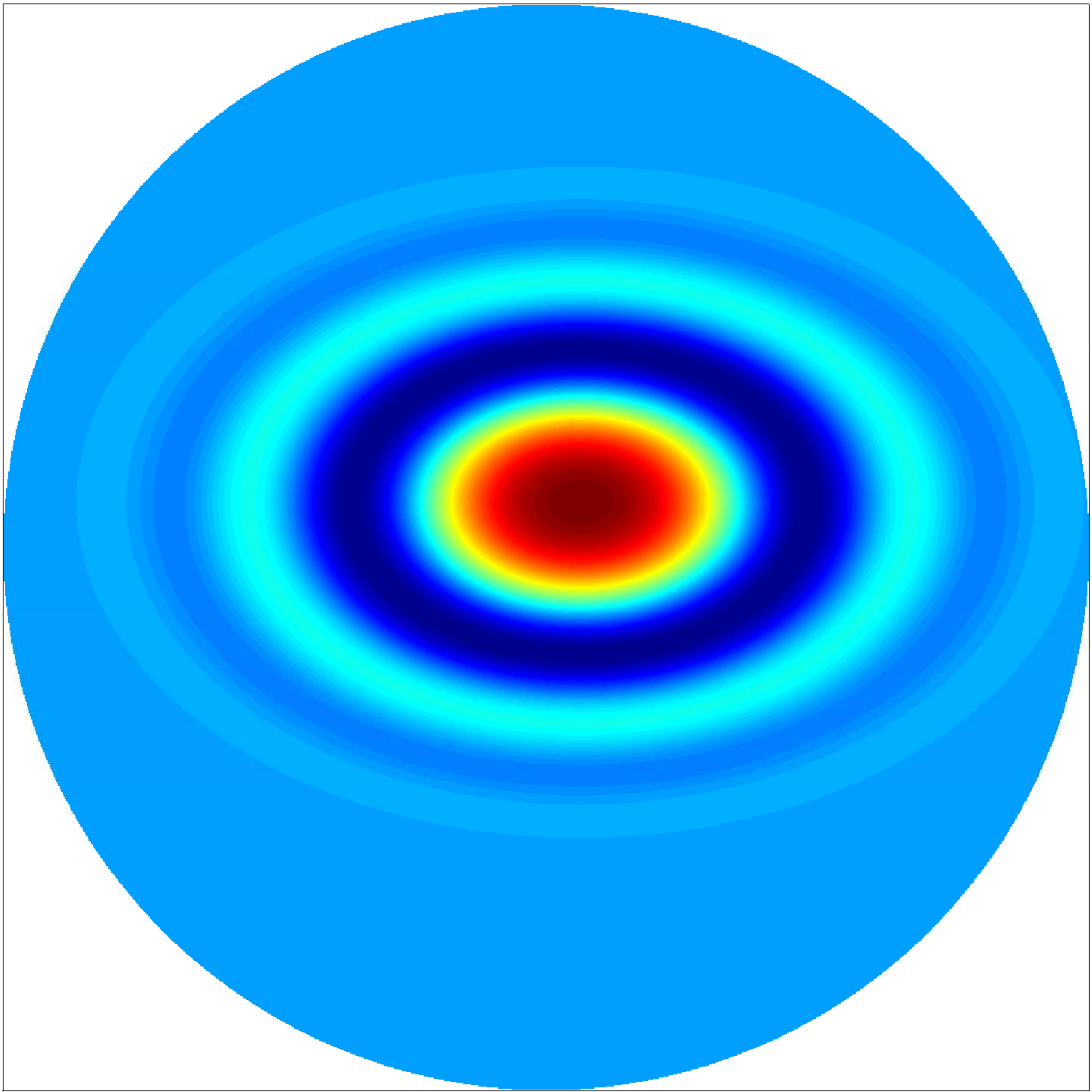}}

  	\caption{Evaluation of the spectral interpolation scheme and the Clenshaw-Curtis quadrature for the function $f$ given in \eqref{eq:testfunction} at 
  	the rhodonea nodes $\LSm$ with different frequency parameters $\vect{m}$. The value $I(f)$ ($\approx 0.03811377782454$) denotes the exact integral value of $f$ over the unit disk. 
  	} \label{fig:LS-4}
\end{figure}

\paragraph{\bf \ref{sec:convergence}.5. A numerical example}
As a final numerical experiment we test the developed interpolation scheme and the Clenshaw-Curtis 
quadrature formula for the function 
\begin{equation} \label{eq:testfunction}
f(\vect{x}) = e^{- 2 ((1.6 x_1-0.1)^2+(2.4 x_2-0.2)^2)} \cos((4x_1-0.25)^2+(6x_2-0.5)^2).
\end{equation}
The results of this test for different frequency parameters $\vect{m} = (m,m+1)$, $m \in \Nn$, are illustrated in Figure \ref{fig:LS-4}. We observe a fast convergence of the interpolant $P_{\mathcal{R},f}^{(m,m+1)}$ and the quadrature value $Q(f)$ towards $f$ and $I(f)$, respectively, as the parameter $m \in \Nn$ gets large. 
This fast spectral convergence is not surprising since $f$ is analytic. The error estimate \eqref{eq:1806211617} provides a convergence rate of the interpolation scheme faster than any polynomial. 

We also compare the interpolation scheme for the two spectral index sets $\Gamsquare$ and $\Gamtriangle$. For smaller values of $m$ the discontinuity of the interpolant in the space $\Pimrealtriangle$ is clearly visible at the center $(0,0)$ of $\Dd$ while, according to Theorem \ref{th:201807231722} and Remark \ref{rem:AB1}, the considered interpolants $P^{(\vect{m})}_{\mathcal{R},f}$ have no discontinuities in $\Pimrealsquare$. For increasing values
of $m$ the differences between the interpolant $P^{(\vect{m})}_{\mathcal{R},f}$ in $\Pimrealtriangle$ and
$\Pimrealsquare$ almost vanish. Since the Clenshaw-Curtis quadrature formula $Q(f)$ is the same for the interpolation spaces $\Pimrealsquare$ and $\Pimrealtriangle$ we observe no differences in the evaluations of $Q(f)$ for the two spectral index sets. A {\sc Matlab} 
code for this numerical example with an implementation of the spectral interpolation scheme on the rhodonea nodes can be found at 
{\it https://github.com/WolfgangErb/RDisk}.

\section{Proofs} \label{sec:proof}

\paragraph{\bf 9.1. Proof of Proposition \ref{prop-11}}

\begin{proofof}{Proposition \ref{prop-11}}
For $s,t \in \Rr$, we write $s\eqsim t$ if $s$ and $t$ satisfy the equivalence relation $t-s\in 2\pi\mathbb{Z}$. In a first step, we determine all points $t \in [0,2\pi)$ so
that $\rhodonea(t)=(0,0)$, i.e., $\rhodonea(t)$ is the center of the unit disk. By the definition \eqref{201509161237}
of the rhodonea curve $\rhodonea$, we have $\rhodonea(t)=(0,0)$
if and only if
\begin{equation}\label{170210-1}
m_2 t \eqsim \pm \pi/2,
\end{equation}
i.e., if and only if $t = t^{(\vect{m})}_{l}$ for some $l\in \{m_1, 3m_1, \ldots ,(4m_2-1)m_1\}$. This provides the first statements (i) and (i)' of Proposition \ref{prop-11}.

In a second step, we consider now for fixed $t \in [0,2\pi)$ the case $\rhodonea(t) \neq (0,0)$. By the definition \eqref{201509161237} of $\rhodonea$, we get $s\in \mathcal{S}(t)$ if and only if
\[\cos(m_2 s) = v \cos(m_2 t) \quad \text{and} \quad  \left( \begin{array}{l} \cos(m_1 s - \alpha \pi) \\ \sin(m_1 s - \alpha \pi) \end{array}  \right) =  v  
 \left( \begin{array}{l} \cos(m_1 t - \alpha \pi) \\ \sin(m_1 t - \alpha \pi) \end{array}  \right),\]
for some $v \in \{-1,1\}$. In the left equation, we have equality exactly if 
$m_2 s  \eqsim u m_2 t + \frac{1-v}{2} \pi$. In the right equation, equality is obtained if
$m_1 s \eqsim m_1 t + \frac{1-v}{2} \pi$ holds true. Combining these observations, we get
$s\in \mathcal{S}(t)$ if and only if
\begin{equation}\label{170210-2}
m_2( s - u t) + \frac{1-v}{2} \pi \eqsim 0 \quad \text{and} \quad m_1 ( s - t) + \frac{1-v}{2} \pi \eqsim 0 \qquad \text{for some $u,v\in \{-1,1\}$}.
\end{equation}
We characterize now all $s \in [0,2\pi)$ that satisfy the conditions in \eqref{170210-2}.
As $m_1$ and $m_2$ are relatively prime, B\'ezout's lemma provides two integers
$a, b\in\mathbb{Z}$ such that $a m_1 + b m_2 = 1$. 
Multiplying the first and the second identity in \eqref{170210-2} with $b$ and $a$, respectively, and adding them up, we obtain 
\begin{equation} \label{170210}
s \eqsim t - b m_2 (1-u) t - (a+b) \frac{1-v}{2} \pi
\end{equation}
as a description for all possible solutions $s$ of \eqref{170210-2}. 
In particular,
this implies that $s \eqsim t$ or $s \eqsim t + \pi$ are the only possible solutions for $u = 1$.
If $u = -1$, we multiply the first and the second identity in \eqref{170210-2} with $m_1$ and $m_2$, respectively, and obtain
\begin{align} \notag 2 m_1 m_2 s\eqsim  m_1 m_2 (s + t) + m_2 m_1 ( s - t) \eqsim (m_1 + m_2) \frac{1-v}{2} \pi, \\
2 m_1 m_2 t \eqsim  m_1 m_2 (s + t) - m_2 m_1 ( s - t)  \eqsim (m_1 - m_2) \frac{1-v}{2} \pi. \label{eq:00101} \end{align}
Therefore, for $u = -1$ we can conclude that $s = t_{l'}^{(\vect{m})}$ and $t = t_{l}^{(\vect{m})}$ for some $l,l' \in \Zz$. \\[-3mm]

Based on these deductions we can now derive the remaining properties. We distinguish the two cases 
$m_1 + m_2$ odd and $m_1 + m_2$ even:

If we suppose that $m_1 + m_2$ is odd and $\rhodonea(t) \neq (0,0)$, $s \eqsim t + \pi$ can not satisfy both identities in \eqref{170210-2}, and is therefore not a solution of \eqref{170210-2}. Hence, if
$t \neq t_{l}^{(\vect{m})}$ for some $l \in \{0, \ldots, 4 m_1 m_2 -1\}$, then
$t$ is the only element of $[0,2 \pi)$ in $\mathcal{S}^{(\vect{m})}(t)$ (corresponding to the sole solution of \eqref{170210-2} with the
values $u = 1$ and $v = 1$)
and the largest part of statement (iii) is proven. \\
If $t \in [0, 2\pi)$ and $t = t_{l}^{(\vect{m})}$ for some $l\in \{0,\ldots,4m_1m_2-1\}$, then further solutions of \eqref{170210-2} for
the value $u = -1$ are possible. The corresponding possibilities given by \eqref{170210} are $s \eqsim t - 2 b m_2 t$ or $s' \eqsim s + \pi$.
Since we are in the case $m_1+m_2$ odd, only one
of the two solutions $s$ and $s'$ is possible. Also, $s$ or $s'$ do not depend on the particular
value of the integer $b$ from B\'ezout's lemma (since $b$ is uniquely determined modulo $m_1$). The identity \eqref{170210} therefore gives in this case
exactly one solution of \eqref{170210-2} for $u = -1$ (we denote this solution as $s \in [0,2\pi)$). If $t = t_{l}^{(\vect{m})}$ with $l \equiv 0 \mod 2 m_1$
we obtain $s \eqsim t$ and $s \not\eqsim t$ if $l \not \equiv 0 \mod m_1$. This yields the remaining assertions (ii) and (iii) of the proposition.
Note that the case $l \equiv m_1 \mod 2 m_1$ is already treated in (i).

Finally, we shortly discuss the case when both integers $m_1$ and $m_2$ are odd, or in other words, when $m_1 + m_2$ is even. The statements (ii)' and (iii)' can be deduced in a similar
way as before with one cardinal difference: in this case the curve $\rhodonea(t)$ is traversed twice as $t$ varies from $0$ to $2\pi$.
In \eqref{170210-2}, we consequently see that if $s \in \mathcal{S}^{(\vect{m})}(t)$ then also $s + \pi \in \mathcal{S}^{(\vect{m})}(t)$.
This yields additional solutions which double the value of $\# \mathcal{S}^{(\vect{m})}(t)$ in
(ii)' and (iii)'. Further, from \eqref{eq:00101} we can deduce that the double points of the curve are given at the positions
$t = t_{2l}^{(\vect{m})}$ for some $l \in \Zz$. \qed
\end{proofof}

\paragraph{\bf 9.2. Proof of Theorem \ref{cor-111}}

To prove Theorem \ref{cor-111}, it is necessary to establish a relation between the nodal index sets $\Imm$ 
and the sampling points along the rhodonea curves $\rhodonea$. This relation can be 
extracted from the following auxiliary result:

\begin{proposition}\label{1509221521} Let $\vect{m} \in \Nn^2$ and $g = \gcd(\vect{m})$
For all $l \in \{0,\ldots, 4 m_{1} m_{2}/g -1\}$ and  $\rho \in \{0,\ldots,2 g-1\}$, there exists an $\vect{i}\in \Imm$ and $u,v \in\{-1,1\}$ such that
\begin{align}
i_{1} &\equiv u (v l + (1-v) m_1) \mod 4 m_1, \label{1509221526} \\
i_{2} &\equiv l - 2\rho - (1-v) m_2 \mod 4 m_2. \label{1509221526B}
\end{align}
The index $\vect{i}\in \Imm$ is uniquely determined by \eqref{1509221526} and \eqref{1509221526B} and provides a well-defined surjective mapping $\vect{i}^{(\vect{m})}: \{0,\ldots, 4 m_{1} m_{2}/g -1\} \times \{0,\ldots,2 g-1\} \to \Imm$ by $\vect{i}^{(\vect{m})}(l,\rho)=\vect{i}$. 
Further, $\vect{i}^{(\vect{m})}(l,\rho)\in \Imm_{0}$ ( $\vect{i}^{(\vect{m})}(l,\rho)\in \Imm_{1}$) holds true if and only if $l$ is even ($l$ is odd).\\[-3mm]

\noindent With the additional convention
\begin{equation} \label{1509221526C}
\text{$u = 1$ if $l \equiv 0 \mod 4 m_1$,\hspace{1cm} $u = -1$ if $l \equiv 2 m_1 \mod 4 m_1$,}
\end{equation}
the numbers $u,v \in \{-1,1\}$ are uniquely determined by \eqref{1509221526} and \eqref{1509221526B}.
This gives for $\vect{i}\in \Imm$ the cardinalities 
\begin{equation} \label{1509221526D} \#\{\,(l,\rho) \,|\,\vect{i}^{(\vect{m})}(l,\rho)=\vect{i}\,\}=\left\{ \begin{array}{ll} 4 \quad & \text{if \ $0 < i_1 \leq m_1$,} \\
2 \quad & \text{if \ $i_1 = 0$.} \end{array}\right.\end{equation}
\end{proposition}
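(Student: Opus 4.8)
The plan is to treat Proposition \ref{1509221521} as a purely number-theoretic statement about the congruences \eqref{1509221526}--\eqref{1509221526B}, keeping the geometric content only as a guide: these relations encode the coincidence $\vect{\rho}^{(\vect{m})}_{\rho/m_2}(t^{(\vect{m})}_{l}) = \vect{x}^{(\vect{m})}_{\vect{i}}$ in polar coordinates, via the radial matching $\cos(l\pi/(2m_1)) = v\cos(i_1\pi/(2m_1))$ and the angular matching modulo $4m_2$. The four sign combinations of $u,v\in\{-1,1\}$ reflect the symmetries $x\mapsto -x$ and $x\mapsto 2m_1-x$ of the Chebyshev cosine, and the whole argument amounts to analyzing how these symmetries fold the residue $l \bmod 4m_1$ into the fundamental domain $\{0,\dots,m_1\}$.

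First I would establish existence and well-definedness of the radial index. For fixed $(l,\rho)$, the values $\{i_1,-i_1,2m_1-i_1,2m_1+i_1\}\bmod 4m_1$ read off from \eqref{1509221526} as $(u,v)$ ranges over $\{-1,1\}^2$ form exactly the orbit of $l$ under the Klein four-group generated by $x\mapsto -x$ and $x\mapsto 2m_1+x$; this orbit meets $\{0,\dots,m_1\}$ in precisely one point, which I take as $i_1$. Having fixed $i_1$, I would read off a compatible $v$ and let $i_2$ be the unique representative of $l-2\rho-(1-v)m_2$ in $(-2m_2,2m_2]$ from \eqref{1509221526B}. A short parity computation---$2m_1$, $2\rho$ and $(1-v)m_2$ are all even, so $i_1\equiv i_2\equiv l\pmod 2$---then simultaneously yields the parity condition defining $\Imm$, places $\vect{i}$ in $\Imm_0$ resp.\ $\Imm_1$ according as $l$ is even or odd, and settles the final assertion of the proposition.

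The crux, and the main obstacle, is the behaviour at the two boundary layers $i_1=0$ and $i_1=m_1$, where the generic four-to-one structure degenerates in two different ways and uniqueness of $\vect{i}$ and of $(u,v)$ must be re-established by hand. For $0<i_1<m_1$ the four orbit values are distinct, so $l\bmod 4m_1$ pins down $(u,v)$, hence $v$, hence $i_2$. For $i_1=0$ the orbit collapses to $\{0,2m_1\}$: here $v$ is still forced by whether $l\equiv 0$ or $l\equiv 2m_1\pmod{4m_1}$ (so $i_2$, depending only on $v$, is unambiguous), but $u$ becomes free and must be fixed by the convention \eqref{1509221526C}. For $i_1=m_1$ the orbit collapses to $\{m_1,3m_1\}$: now $u$ is forced by $l$ while $v$ is free, and I would use the constraint $i_2\le 0$ built into $\Imm$ to select $v$, noting that the two candidate values of $i_2$ differ by $2m_2$ so that exactly one lies in $(-2m_2,0]$. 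Checking that these selection rules make the map single-valued, and that \eqref{1509221526C} together with the membership requirement $i_2\le0$ determine $(u,v)$ uniquely, is the delicate bookkeeping.

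Finally, for surjectivity and the fiber cardinalities \eqref{1509221526D} I would run the argument backwards by the Chinese Remainder Theorem for the non-coprime moduli $4m_1$ and $4m_2$, whose greatest common divisor is $4g$ and least common multiple is $4m_1m_2/g$. Fixing $\vect{i}\in\Imm$ and an admissible $(u,v)$, the radial congruence fixes $l\bmod 4m_1$ and the angular congruence ties $l\bmod 4m_2$ to $\rho$; the parity $i_1\equiv i_2\pmod 2$ guarantees that the compatibility condition modulo $4g$ is solvable for $\rho$, which then determines $\rho$ modulo $2g$, hence a unique $\rho\in\{0,\dots,2g-1\}$, and finally a unique $l\in\{0,\dots,4m_1m_2/g-1\}$. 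Thus each admissible $(u,v)$ contributes exactly one preimage, so the fiber size equals the number of admissible pairs: four when $0<i_1<m_1$, four when $i_1=m_1$ (both values of $v$ now occurring, at distinct preimages), and two when $i_1=0$ (the convention discarding two of the four). This proves \eqref{1509221526D}; since every fiber is non-empty it also proves surjectivity, and as a consistency check the fibers sum to $2m_2\cdot 2 + (2m_1-1)m_2\cdot 4 = 8m_1m_2$, matching the cardinality $\tfrac{4m_1m_2}{g}\cdot 2g$ of the domain.
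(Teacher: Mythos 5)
Your proposal is correct and follows essentially the same route as the paper's proof: a forward case analysis on $l \bmod 4m_1$ (generic case; $l\equiv\pm m_1$, where the constraint $i_2\le 0$ in $\Imm$ selects $v$; $l\equiv 0,2m_1$, where the convention \eqref{1509221526C} fixes $u$), the parity observation $i_1\equiv l\equiv i_2 \bmod 2$ for membership in $\Imm$ and the $\Imm_0/\Imm_1$ dichotomy, and a backward count of fibers via the Chinese remainder theorem with compatibility modulo $4g$ determining a unique $\rho\in\{0,\dots,2g-1\}$ and then a unique $l$. Your Klein-four-orbit framing of the radial congruence and the closing cardinality cross-check ($8m_1m_2$ on both sides) are pleasant additions, but substantively the argument coincides with the paper's.
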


\begin{proof}
For $l \in \{0,\ldots, 4 m_{1} m_{2}/g -1\}$ we can find an integer $0\leq i_{1}\leq m_{1}$ and $u, v \in \{-1,1\}$ such 
that 
$u i_1 + (1-v) m_1 \equiv l \mod 4 m_1$
holds true, i.e. that \eqref{1509221526} is satisfied. The number $i_1$ in this equation is uniquely determined by $l$, whereas, 
with the convention \eqref{1509221526C}, the numbers $u$ and $v$ are uniquely determined by \eqref{1509221526} exactly if $l \not \equiv m_1 \mod 4 m_1$ and
$l \not \equiv - m_1 \mod 4 m_1$.
In this case, the tuple $(l,\rho)$ and the number $v$ given by \eqref{1509221526} determine an unique integer
$-2m_2 < i_{2} \leq 2m_{2}$ such that \eqref{1509221526B} is satisfied. In the remaining case when $l \equiv m_1 \mod 4 m_1$ or $l \equiv - m_1 \mod 4 m_1$,
the tuple $(l,\rho)$ yields an unique $-2m_2 < i_{2} \leq 0$ and $v \in \{-1,1\}$ such that \eqref{1509221526B} is satisfied. Furthermore, in this
case, the so determined $v \in \{-1,1\}$ fixes also the number $u \in \{-1,1\}$ in \eqref{1509221526}. Since $i_{1} \equiv l\equiv  i_{2} \mod 2$, 
we can finally state that the index $\vect{i}$ determined in this way from \eqref{1509221526} and \eqref{1509221526B} is an element of $\Imm$. Further, looking at
the definition in \eqref{eq:0901} we also see that $\vect{i}^{(\vect{m})}(l,\rho)\in \Imm_{0}$ or $\vect{i}^{(\vect{m})}(l,\rho)\in \Imm_{1}$ 
holds if and only if $l\equiv 0 \mod 2$ or $l\equiv 1 \mod 2$, respectively. 

We finally prove \eqref{1509221526D}. Let $\vect{i}\in\Imm$ and $u,v\in \{-1,1\}$. Then for
$a_{1} = u (v i_1 + (1-v) m_1)$ there is a uniquely determined $\rho\in \{0,\ldots,2 g-1\}$ such
that $a_2 = i_2 + (1-v) m_2 + 2 \rho$ satisfies $a_1 \equiv a_2 \mod 4g$. The Chinese remainder theorem  now yields a unique number
$l\in \{0,\ldots,4m_1m_2/g-1\}$ satisfying
\[
 a_1 \equiv l \mod 4m_1, \qquad a_2 \equiv l \mod 4m_2.
\]
With the convention \eqref{1509221526C}
the numbers $a_1$ and $a_2$ are uniquely determined by $\vect{i}\in\Imm$ and $u,v\in \{-1,1\}$. Thus,
the numbers $(l,\rho)$ satisfying \eqref{1509221526} and \eqref{1509221526B} are uniquely determined by $\vect{i}\in\Imm$ and $u,v\in \{-1,1\}$.
In the case $0 < i_1 = m_1$
both choices of $u$ and $v$ give
distinct elements $(l,\rho)$, whereas, according to the convention \eqref{1509221526C}, in the case $i_1 = 0$ only the parameter $v$ can
be chosen freely. \qed
\end{proof}

\begin{proofof}{Theorem \ref{cor-111}}
The definitions \eqref{201509161237} and \eqref{eq-samples1} of the rhodonea curve $\rhodonea$ and the sampling points $t^{(\vect{m})}_{l}$ give us directly the identity
\[ \ts \vect{\rho}^{(\vect{m})}_{\rho/m_2} (t^{(\vect{m})}_{ l}) =
\left( \cos \left( \frac{ l \pi}{2 m_1} \right) \cos \left( \frac{l \pi}{2 m_2} - \frac{2 \rho}{2 m_2} \pi \right), \
\cos \left(\frac{ l \pi}{2 m_1} \right)
\sin \left(\frac{l \pi}{2 m_2} - \frac{2 \rho}{2 m_2} \pi \right) \right).\]
Now, by Proposition \ref{1509221521}, we can find an index $\vect{i}\in \Imm$ and $u,v \in\{-1,1\}$ such that \eqref{1509221526} and \eqref{1509221526B} are satisfied. This implies
\begin{align*} \vect{\rho}^{(\vect{m})}_{\rho/m_2} (t^{(\vect{m})}_{ l}) &=  \ts
\left( \cos \left( \frac{ v i_1 \pi}{2m_1} + \frac{1-v}{2} \pi \right) \cos \left( \frac{ i_2 \pi}{2m_2} + \frac{1-v}{2}\pi \right), \
\cos \left( \frac{ v i_1 \pi}{2m_1} + \frac{1-v}{2} \pi \right) \sin \left( \frac{ i_2 \pi}{2m_2} + \frac{1-v}{2} \pi \right)  \right) \\
&=  \ts
\left( \cos \left( \frac{ i_1 \pi}{2m_1} \right) \cos \left( \frac{ i_2 \pi}{2m_2} \right), \ \cos \left( \frac{ i_1 \pi}{2m_1} \right)
\sin \left(\frac{ i_2 \pi}{2m_2} \pi \right)  \right) = \vect{x}_{\vect{i}}^{(\vect{m})}.
\end{align*}
The reverse implication is obtained by inverting these steps:
for given $\vect{x}_{\vect{i}}^{(\vect{m})}$, we can fix $u,v \in\{-1,1\}$ and Proposition \ref{1509221521} yields a unique tuple $(l,\rho)$ with
$\vect{x}_{\vect{i}}^{(\vect{m})} = \vect{\rho}^{(\vect{m})}_{\rho/m_2} (t^{(\vect{m})}_{ l})$. \qed
\end{proofof}

\paragraph{\bf 9.3. Proof of Theorem \ref{thm:decompositionrhodonea}}

\begin{proofof}{Theorem \ref{thm:decompositionrhodonea}} (a)
Let $\vect{x}(r,\theta)\in \mathcal{R}^{(\vect{m})}$. We choose $s \in [0,\pi/2]$ and $t'\in \mathbb{R}$ such that 
$r=\cos(s)$ and $T_{m_{1}}(r) = \cos (m_1 s) = \cos (m_2 \theta) = \cos(\frac{m_1 m_2}{g} t')$ . 
Then, we can find $v_1,v_2 \in \{-1,1\}$ and $h_1,h_2 \in\mathbb{Z}$ such that 
\begin{align*}
m_1 s &= v_1 \left(\frac{m_1 m_2}{g} t' + h_1 \pi\right), \qquad m_2 \theta = v_2 \left(\frac{m_1 m_2}{g} t' + h_2 \pi\right), 
\end{align*}
and therefore 
\[r = \cos \left(\frac{m_2}{g} t' + \frac{h_1}{m_1} \pi\right) \quad \text{and} \quad \theta = v_2 \left(\frac{m_1}{g} t' + \frac{h_2}{m_2} \pi \right).\]
Further, there is a unique $\rho \in \{0, \ldots, 2 g -1\}$ such that $h_1 \equiv h_2 + v_2 \rho \mod 2g$. Then, by the Chinese remainder theorem we can 
find an $l \in \Zz$ such that $l\equiv h_1 \mod 2 m_1$ and $l\equiv h_2 + v_2 \rho \mod 2 m_1$. This gives (we assume that the angle $\theta$ is an element in $\Rr / (2 \pi \Zz)$)
\[r = \cos \left(\frac{m_2}{g} t' + \frac{l}{m_1} \pi \right) \quad \text{and} \quad \theta = v_2 \left(\frac{m_1}{g} t' + \frac{l- v_2\rho}{m_2} \pi \right).\]
Then, introducing  $t= v_2(t'+l g \pi/ (m_1 m_2))$ we obtain
\[r = \cos \left(\frac{m_2}{g} t \right) \quad \text{and} \quad \theta = \frac{m_1}{g} t - \frac{\rho}{m_2} \pi.\] 
Therefore $\vect{x} \in \vect{\varrho}^{(\vect{m})}_{\rho/m_2}([0,P))$ for some $\rho \in \{0, \ldots, 2g-1\}$.
The implication $\vect{\varrho}^{(\vect{m})}_{\rho/m_2} \subseteq \mathcal{R}^{(\vect{m})}$ is easily verified
by inserting the curve in the description \eqref{1509222011} of the rhodonea variety $\mathcal{R}^{(\vect{m})}$. 

(b) We have a look at the definition \eqref{eq:0917234} of the points $\vect{x}^{(\vect{m})}_{\vect{i}}$ in $\LSm$. Plugging the points
$\vect{x}^{(\vect{m})}_{\vect{i}}$ into the polar equation \eqref{1509222011} of the rhodonea variety, we obtain 
$T_{m_1}(r^{(m_1)}_{i_1}) = \cos (m_2 \theta^{(m_2)}_{i_2}) = 1$ if $\vect{i} \in \Imm_0$ and $T_{m_1}(r^{(m_1)}_{i_1}) = \cos (m_2 \theta^{(m_2)}_{i_2}) = 0$ if $\vect{i} \in \Imm_1$.
On the other hand, it is well-known that the extrema of the univariate functions $T_{m_1}(r)$ and $\cos (m_2 \theta)$ are attained at $r=r^{(m_1)}_{i_1}$
and $\theta = \theta^{(m_2)}_{i_2}$, $\vect{i} \in \Imm_0$, respectively. Also, it is well-known that all roots of $T_{m_1}(r)$ and $\cos (m_2 \theta)$ are given by
$r=r^{(m_1)}_{i_1}$ and $\theta = \theta^{(m_2)}_{i_2}$, $\vect{i} \in \Imm_1$, respectively. \qed
\end{proofof}

\paragraph{\bf 9.4. Proofs of Section \ref{1507091240}}

The proofs of Section \ref{1507091240} base on the following technical result:

\begin{proposition}\label{1507211320}
Let $\vect{\gamma}\in\Zz^{2}$ and $\gamma_1 \equiv \gamma_2 \mod 2$. If $\int\polbas \mathrm{d}\rule{1pt}{0pt}\mathrm{w}\neq 0$, then
\begin{equation}\label{1507201132}
\text{there exist $(h_1,h_2) \in \Zz^{2}$ with $\gamma_{1}=2 h_{1}m_{1}$, $\gamma_{2}=2h_{2}m_{2}$, and
$h_1 + h_2 \equiv 0 \mod 2$}.
\end{equation}
If \eqref{1507201132} is satisfied, then $\int\polbas \mathrm{d}\rule{1pt}{0pt}\mathrm{w}=1$.
\end{proposition}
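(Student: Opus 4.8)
The plan is to evaluate the weighted sum $S:=\sum_{\vect{i}\in\Imm}\mathrm{w}^{(\vect{m})}_{\vect{i}}\polbas(\vect{i})$ directly as an iterated sum, first over the angular index $i_2$ and then over the radial index $i_1$, exploiting that the summand $\polbas(\vect{i})=\cos(\gamma_1 i_1\pi/(2m_1))\,\mathrm{e}^{\imath\gamma_2 i_2\pi/(2m_2)}$ separates. The only genuine complication is that $\Imm$ is \emph{not} a product set: for $0\le i_1\le m_1-1$ the index $i_2$ ranges over all $2m_2$ values in $(-2m_2,2m_2]$ of parity $i_1\bmod 2$, while at the center $i_1=m_1$ only the $m_2$ values with $i_2\le 0$ survive. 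Accordingly I would write $S=S_A+S_B$, with $S_A$ collecting the contributions of $0\le i_1\le m_1-1$ (these carry weight $1$ at $i_1=0$ and $2$ otherwise) and $S_B$ the center contribution at $i_1=m_1$ (weight $2$).

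First I would evaluate the inner angular sums. For $0\le i_1\le m_1-1$ the sum $\sum_{i_2\equiv i_1\,(2)}\mathrm{e}^{\imath\gamma_2 i_2\pi/(2m_2)}$ is a complete character sum over a residue system modulo $4m_2$ cut down by the parity projector $\tfrac12\bigl(1+(-1)^{i_1}(-1)^{i_2}\bigr)$; writing $(-1)^{i_2}\mathrm{e}^{\imath\gamma_2 i_2\pi/(2m_2)}=\mathrm{e}^{\imath(\gamma_2+2m_2)i_2\pi/(2m_2)}$ and using orthogonality of characters on $\Zz/4m_2\Zz$, this equals $2m_2(-1)^{i_1 h_2}$ when $\gamma_2=2h_2 m_2$ and vanishes whenever $2m_2\nmid\gamma_2$. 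For the center I would compute the truncated geometric sum over the $m_2$ admissible $i_2\le 0$; here the hypothesis $\gamma_1\equiv\gamma_2\bmod 2$ enters decisively: if $\gamma_1$ is odd the radial factor $\cos(\gamma_1 m_1\pi/(2m_1))=\cos(\gamma_1\pi/2)$ already vanishes, while if $\gamma_1$ is even then $\gamma_2$ is even, which forces the truncated geometric sum to vanish unless $2m_2\mid\gamma_2$. Hence $S=0$ whenever $2m_2\nmid\gamma_2$, and I may assume $\gamma_2=2h_2 m_2$, in which case the center angular sum equals $(-1)^{m_1 h_2}m_2$.

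Assuming $\gamma_2=2h_2 m_2$, I would substitute these values back and absorb the sign $(-1)^{i_1 h_2}$ into the cosine via $(-1)^{i_1 h_2}\cos(\gamma_1 i_1\pi/(2m_1))=\cos(\gamma_1' i_1\pi/(2m_1))$, where $\gamma_1'=\gamma_1$ for $h_2$ even and $\gamma_1'=\gamma_1+2m_1$ for $h_2$ odd. After checking that $S_B$ is exactly the $i_1=m_1$ value of this same cosine, the two pieces merge into $S=\tfrac{1}{2m_1}\bigl[c_0+2\sum_{i_1=1}^{m_1-1}c_{i_1}+c_{m_1}\bigr]$ with $c_{i_1}=\cos(\gamma_1' i_1\pi/(2m_1))$. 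Using evenness $c_{-i_1}=c_{i_1}$, the bracket equals $\sum_{i_1=-(m_1-1)}^{m_1}c_{i_1}=\Re\sum_{i_1=-(m_1-1)}^{m_1}\mathrm{e}^{\imath\gamma_1' i_1\pi/(2m_1)}$, a character sum over $2m_1$ consecutive integers. Since $\gamma_1\equiv\gamma_2$ is even (as $2m_2\mid\gamma_2$), the base $\omega=\mathrm{e}^{\imath\gamma_1'\pi/(2m_1)}$ satisfies $\omega^{2m_1}=1$, so summing over a full period gives $2m_1$ if $\omega=1$ (i.e. $4m_1\mid\gamma_1'$) and $0$ otherwise.

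Finally I would translate $4m_1\mid\gamma_1'$ back to the stated condition: for $h_2$ even it reads $\gamma_1=2h_1 m_1$ with $h_1$ even, and for $h_2$ odd it reads $\gamma_1=2h_1 m_1$ with $h_1$ odd; both amount to $\gamma_1=2h_1 m_1$ with $h_1+h_2$ even, which is precisely \eqref{1507201132}. When it holds, $S=\tfrac{1}{2m_1}\cdot 2m_1=1$, and otherwise $S=0$, giving the assertion. I expect the main obstacle to be the bookkeeping at the center: reconciling the asymmetric weight (only $i_1=0$ is halved, whereas $i_1=m_1$ keeps weight $2$ but carries only half the angular range) so that the center term slots in as the missing $i_1=m_1$ summand of a symmetric cosine sum. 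The parity hypothesis $\gamma_1\equiv\gamma_2\bmod 2$ is exactly what makes this reconciliation go through.
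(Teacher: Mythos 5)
Your proof is correct, and it takes a genuinely different route from the paper's. The paper does not compute the weighted sum on $\Imm$ directly: it invokes the sampling parametrization of Proposition \ref{1509221521} to rewrite $\int\polbas\,\mathrm{d}\rule{1pt}{0pt}\mathrm{w}$ as the plain double exponential sum $\frac{1}{8m_1m_2}\sum_{l}\sum_{\rho}\mathrm{e}^{\imath(\gamma_1 l\pi/(2m_1)+\gamma_2 l\pi/(2m_2)+2\gamma_2\rho\pi/(2m_2))}$ over the curve parameters $(l,\rho)$ — the multiplicities $4$ and $2$ of that map are exactly what the weights $\mathrm{w}^{(\vect{m})}_{\vect{i}}$ encode — and then applies the geometric-sum identity \eqref{1506171253} once, obtaining nonvanishing precisely when $\gamma_2/(2m_2)\in\Zz$ and $\gamma_1/(2m_1)+\gamma_2/(2m_2)\in 2\Zz$, which is \eqref{1507201132}. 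You instead work intrinsically on the interlacing polar grid: iterated angular-then-radial character sums, with the two delicate points handled correctly — the truncated center sum at $i_1=m_1$ (where the hypothesis $\gamma_1\equiv\gamma_2\bmod 2$ enters, either killing the radial factor $\cos(\gamma_1\pi/2)$ for odd $\gamma_1$ or making $q^{m_2}=\mathrm{e}^{\imath\gamma_2\pi}=1$ so the half-range geometric sum vanishes unless $2m_2\mid\gamma_2$), and the sign absorption $(-1)^{i_1h_2}\cos(\gamma_1 i_1\pi/(2m_1))=\cos(\gamma_1'i_1\pi/(2m_1))$ that lets the asymmetric endpoint weights merge into the full-period sum $\sum_{i_1=-(m_1-1)}^{m_1}\omega^{i_1}$ of exactly $2m_1$ terms. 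I verified the endpoint bookkeeping: $c_0+2\sum_{i_1=1}^{m_1-1}c_{i_1}+c_{m_1}$ is indeed the even extension over one full period, and $4m_1\mid\gamma_1'$ translates to $\gamma_1=2h_1m_1$ with $h_1+h_2$ even in both parity cases of $h_2$. What each approach buys: yours is self-contained and elementary, needing no machinery beyond character orthogonality, and it makes visible why the half weight at $i_1=0$ versus the halved angular range at $i_1=m_1$ conspire to close the period; the paper's is shorter at this point because Proposition \ref{1509221521} has already been paid for, and it explains conceptually where the weights come from — they are the fiber cardinalities of the rhodonea sampling map — which is the structural reason the discrete orthogonality holds at all.
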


In the proof of Proposition \ref{1507211320}, we use the well-known trigonometric identity
\begin{equation}\label{1506171253}
\sum_{l=0}^N \mathrm{e}^{\imath l \vartheta} = \left\{ \begin{array}{ll} \frac{\mathrm{e}^{\imath (N+1) \vartheta}-1}{\mathrm{e}^{\imath \vartheta}-1}
\quad & \vartheta\notin 2\pi\mathbb{Z},\\ N+1 & \vartheta \in 2\pi\mathbb{Z}, \end{array} \right.
\qquad N\in\Nn_0.
\end{equation}

\begin{proof}
Using Proposition \ref{1509221521}, we can manipulate the discrete integral $\int\polbas\mathrm{d}\rule{1pt}{0pt}\mathrm{w}$ as follows:
\begin{align*}
\int\polbas\mathrm{d}\rule{1pt}{0pt}\mathrm{w}
&=  \sum_{\vect{i}\in\Imm}
\mathrm{w}^{(\vect{m})}_{\vect{i}} \cos(\gamma_{1} i_1 \pi/(2m_{1})) \mathrm{e}^{\imath \gamma_{2} i_2 \pi/(2m_{2}) } \\
&=  \frac14 \sum_{u,v \in \{\pm 1\}}\sum_{\vect{i}\in\Imm} \mathrm{w}^{(\vect{m})}_{\vect{i}}
\mathrm{e}^{\imath \,( \gamma_{1} u v i_1/(2m_1) \pi + \gamma_1 (1-v)/2 \pi) + \gamma_{2} i_2/(2m_2) \pi + \gamma_2 (1-v)/2 \pi))}\\
&=  \frac{1}{8m_1m_2} \sum_{l = 0}^{4 m_1 m_2/g} \sum_{\rho = 0}^{2g - 1}
\mathrm{e}^{\imath \,( \gamma_{1} l \pi/(2m_{1}) + \gamma_{2} l \pi/(2m_{2}) + 2 \gamma_2 \rho \pi / (2m_2))}
\end{align*}
The trigonometric identity \eqref{1506171253} implies that the last sum is different from zero if and
only if $\gamma_{1} /(2m_{1}) + \gamma_{2} /(2m_{2}) \in 2 \Zz$ and $\gamma_2 /(2m_2) \in \Zz$
are satisfied. Therefore, if we assume that $\int\polbas\mathrm{d}\rule{1pt}{0pt}\mathrm{w} \neq 0$ then $\gamma_2 = 2 h_2 m_2$ with some
integer $h_2 \in \Zz$ and $\gamma_{1} /(2m_{1}) + \gamma_2 / (2m_2) \in 2 \Zz$. In particular, also $\gamma_1 = 2 h_1 m_1$ with some $h_1 \in \Zz$.
Further, we have $h_1 + h_2 \in 2 \Zz$. This proves the identity \eqref{1507201132}. If \eqref{1507201132} is satisfied then the trigonometric identity \eqref{1506171253} yields
\begin{align*}
\int\polbas\mathrm{d}\rule{1pt}{0pt}\mathrm{w}
&=  \frac{1}{8m_1m_2} \sum_{l = 0}^{4 m_1 m_2 / g} \mathrm{e}^{\imath \,l ( \gamma_{1}  \pi/(2m_{1}) + \gamma_{2} \pi/(2m_{2}))} \sum_{\rho = 0}^{2 g -1}
\mathrm{e}^{\imath \,( 2 \gamma_2 \pi / 2 m_2) \rho } = 1.
\end{align*} \qed
\end{proof}

In order to prove Theorem \ref{1507091911}, i.e., to show that the rectangular set $\Gamsquare$ is a spectral index set, we will use the two identities
\begin{align}
\label{1507222159}
\polbas \overline{\chi^{(\vect{m})}_{\vect{\gamma}'}}
&= \frac{1}{2}  \left(\chi^{(\vect{m})}_{(\gamma_1+\gamma_1',\gamma_2 - \gamma_2')}
+ \chi^{(\vect{m})}_{{(\gamma_1-\gamma_1',\gamma_2 - \gamma_2')}} \right),\\
\overline{\chi^{(\vect{m})}_{\vect{\gamma}}}
&= \chi^{(\vect{m})}_{(\gamma_1,-\gamma_2)}, \label{1507222159C}
\end{align}
which are satisfied for all $\vect{\gamma},\vect{\gamma}' \in \Zz^2$. Formulas \eqref{1507222159}
and \eqref{1507222159C} are a direct consequence of the definition \eqref{A1508291531}
of the discrete functions $\polbas$ as well as the cosine product formula.

\begin{proofof}{Theorem \ref{1507091911}}
We will constantly use condition \eqref{1507201132} and Proposition \ref{1507211320} 
to derive the values of the integrals. For a simpler notation, we denote the index vectors on the right hand side of \eqref{1507222159} by
\[ \vect{\gamma}^+ = (\gamma_1+\gamma_1',\gamma_2-\gamma_2') \quad \text{and} \quad \vect{\gamma}^- = (\gamma_1-\gamma_1',\gamma_2-\gamma_2').\]
We assume first that $\vect{\gamma},\vect{\gamma}'\in \Gamsquare$ and $\vect{\gamma} \neq \vect{\gamma}'$: Since $-2 m_2 < \gamma_2-\gamma_2' < 2 m_2$, the condition
\eqref{1507201132} can be satisfied for $\vect{\gamma}^+$ and $\vect{\gamma}^-$ only if $\gamma_2 = \gamma_2'$. Since we assume that $\vect{\gamma} \neq \vect{\gamma}'$, we get in this case $\gamma_1 \neq \gamma_1'$. This, on the other hand, implies that $\gamma_1-\gamma_1' \in 4 \Zz$, $\gamma_1 + \gamma_1' \in 4 \Zz$ is not possible, and 
therefore that $\vect{\gamma}^+$ and $\vect{\gamma}^-$ can not satisfy the condition \eqref{1507201132}. The product formula \eqref{1507222159} now yields
the orthogonality $\int\polbas \overline{\chi^{(\vect{m})}_{\vect{\gamma}'}} \mathrm{d}\rule{1pt}{0pt}\mathrm{w} = 0$. \\[2mm]
Now, consider $\vect{\gamma},\vect{\gamma}'\in \Gamsquare$ and $\vect{\gamma} = \vect{\gamma}'$:
In this case, we have $\vect{\gamma}^+ = (2 \gamma_1,0)$ and $\vect{\gamma}^- = (0,0)$.
Since $\vect{\gamma}\in \Gamsquare$, we have $0 \leq 2 \gamma_1 \leq 4 m_1$. Therefore, condition \eqref{1507201132} is always satisfied for
$\vect{\gamma}^-$ and satisfied for $\vect{\gamma}^+$ precisely if $\gamma_1 \in \{0,2m_1\}$. Proposition \ref{1507211320} therefore implies \eqref{1508221825}. \\[2mm]
Finally, since the functions $\polbas$, $\vect{\gamma} \in \Gamsquare$, are pairwise orthogonal, 
they are in particular linearly independent and span a subspace of dimension $\# \Gamsquare = 2 m_1 m_2 + m_2$ in
$\mathcal{L}(\Imm)$. Since $\dim \funpol(\Imm) = \# \Imm = (2 m_1 +1) m_2$ is of the same 
complexity, this subspace coincides with $\funpol(\Imm)$. \qed
\end{proofof}

\begin{proofof}{Theorem \ref{1507091912}} The functions $\polbasreal$ are real and satisfy
\begin{equation*}
\polbasreal = \left\{  \begin{array}{ll}
\Re \chi^{(\vect{m})}_{(\vect{\gamma})} = \frac12
(\chi^{(\vect{m})}_{\vect{\gamma}} + \overline{\chi^{(\vect{m})}_{\vect{\gamma}}}),
                                        & \text{if $\vect{\gamma}$ is in the sets (i) or (iii) of \eqref{1702291124}}, \\
\Im \chi^{(\vect{m})}_{(\vect{\gamma})} = \frac1{2 \imath}
(\chi^{(\vect{m})}_{\vect{\gamma}} - \overline{\chi^{(\vect{m})}_{\vect{\gamma}}}),
                                        & \text{if $\vect{\gamma}$ is in the sets (ii) or (iv) of \eqref{1702291124}}.
\end{array} \right.
\end{equation*}
Using the condition \eqref{1507201132} of Proposition \ref{1507211320} in combination with the trigonometric identities \eqref{1507222159} and \eqref{1507222159C}, the orthogonality of the basis functions can be derived in the same way as in the proof of Theorem \ref{1507091911}. In the following, we provide the calculation of 
the norms $\|\polbasreal\|_{\mathrm{w}}^2$. Using \eqref{1507222159} and \eqref{1507222159C} for $\polbasreal = \Re \chi^{(\vect{m})}_{(\vect{\gamma})}$, we get
\begin{align*} \|\polbasreal\|_{\mathrm{w}}^2 &= \frac14 \int \left|\chi^{(\vect{m})}_{(\gamma_1, \gamma_2)}
+ \chi^{(\vect{m})}_{(\gamma_1,- \gamma_2)}\right|^2 \mathrm{d}\rule{1pt}{0pt}\mathrm{w} \\
&= \frac{1}{8} \int \left( 2 \chi^{(\vect{m})}_{(0,0)} + 2 \chi^{(\vect{m})}_{(2 \gamma_1,0)} + \chi^{(\vect{m})}_{(2\gamma_1, 2\gamma_2)} + \chi^{(\vect{m})}_{(0, 2\gamma_2)}
+ \chi^{(\vect{m})}_{(2\gamma_1, - 2\gamma_2)} + \chi^{(\vect{m})}_{(0, -2\gamma_2)} \right) \mathrm{d}\rule{1pt}{0pt}\mathrm{w}.
\end{align*}
On the other hand, if $\polbasreal = \Im \chi^{(\vect{m})}_{(\vect{\gamma})}$, we have
\begin{align*} \|\polbasreal\|_{\mathrm{w}}^2 &= \frac14 \int \left|\chi^{(\vect{m})}_{(\gamma_1, \gamma_2)}
- \chi^{(\vect{m})}_{(\gamma_1,- \gamma_2)}\right|^2 \mathrm{d}\rule{1pt}{0pt}\mathrm{w} \\
&= \frac{1}{8} \int \left( 2 \chi^{(\vect{m})}_{(0,0)} + 2 \chi^{(\vect{m})}_{(2 \gamma_1,0)} - \chi^{(\vect{m})}_{(2\gamma_1, 2\gamma_2)} - \chi^{(\vect{m})}_{(0, 2\gamma_2)}
- \chi^{(\vect{m})}_{(2\gamma_1, - 2\gamma_2)} - \chi^{(\vect{m})}_{(0, -2\gamma_2)} \right) \mathrm{d}\rule{1pt}{0pt}\mathrm{w}.
\end{align*}
In both cases, using Proposition \ref{1507211320}, we can explicitly evaluate the integrals on the right hand side. 
Depending on the different cases given in \eqref{1508221826}, the corresponding values for the norm in \eqref{1508221826} can be obtained directly. \qed
\end{proofof}

\paragraph{\bf 9.5. Proofs of Section \ref{sec:interpolation}}

\begin{proofof}{Theorem \ref{201512131945} and Theorem \ref{201512131946}} 
The proof of the two theorems differs only in the choice of the basis system. We will therefore restrict our attention to Theorem \ref{201512131945}. 

We denote by $\delta_{\vect{j}}(\vect{i}) = \delta_{\vect{i}\vect{j}}$, $\vect{j} \in \Imm$, the set of all Dirac functions on $\Imm$. They clearly form an orthogonal basis of $\funpol(\Imm)$.
By Definition \ref{def:spectralindex} of the spectral index set $\Gam$, also the function system $\polbas$, $\vect{\gamma} \in \Gam$, is an orthogonal basis for the space $\funpol(\Imm)$.  
We can therefore expand the Dirac functions $\delta_{\vect{j}}$, $\vect{j} \in \Imm$, as
\[\delta_{\vect{j}}(\vect{i}) = \sum_{\vect{\gamma} \in \Gam} \frac{\langle\;\! \delta_{\vect{j}},\polbas \rangle_{\mathrm{w}}}{\|\polbas\|_{\mathrm{w}}^2} \polbas(\vect{i}) =
\mathrm{w}^{(\vect{m})}_{\vect{j}} \sum_{\vect{\gamma} \in \Gam} \frac{\polbas(\vect{i}) \overline{\polbas(\vect{j})}}{\|\polbas\|_{\mathrm{w}}^2}.\]

Evaluating the Lagrange function $L^{(\vect{m})}_{\vect{j}}$, $\vect{j} \in \Imm$,
given in \eqref{1508220009} at the nodes $(r^{(m_1)}_{i_1}, \theta^{(m_2)}_{i_2})$, $\vect{i} \in \Imm$,
and using the relation \eqref{1508201411}, we get the identity
\[L^{(\vect{m})}_{\vect{j}}(r^{(m_1)}_{i_1},\theta^{(m_2)}_{i_2}) =
\mathrm{w}^{(\vect{m})}_{\vect{j}} \sum_{\vect{\gamma} \in \Gam} \frac{\polbas(\vect{i}) \overline{\polbas(\vect{j})}}{\|\polbas\|_{\mathrm{w}}^2}.\]
Therefore, $L^{(\vect{m})}_{\vect{j}}(r^{(m_1)}_{i_1},\theta^{(m_2)}_{i_2}) = \delta_{\vect{j}}(\vect{i})$ and for $f \in \funpol(\Imm)$ we have
\[f(\vect{i}) = \sum_{\vect{j} \in \Imm} f(\vect{i}) \delta_{\vect{j}}(\vect{i}) = 
\sum_{\vect{j} \in \Imm} f(\vect{i}) L^{(\vect{m})}_{\vect{j}}(r^{(m_1)}_{i_1},\theta^{(m_2)}_{i_2})
= P^{(\vect{m})}_{f}(r^{(m_1)}_{i_1},\theta^{(m_2)}_{i_2}).\]
Thus, the function
$P^{(\vect{m})}_{f}$ solves the interpolation problem $\eqref{1508220011}$, and the mapping $f \to P^{(\vect{m})}_{f}$ is
an injective linear mapping from $\funpol(\Imm)$ into $\Pim$. Further, since $\dim \funpol(\Imm) = \dim \Pim$ this mapping is indeed
an automorphism. This implies that the interpolant $P^{(\vect{m})}_{f}$ is unique and that the system $L^{(\vect{m})}_{\vect{j}}$, $\vect{j} \in \Imm$, forms a basis of $\Pim$. Finally, if $f$ is in the subspace $\funpols(\Imm)$, we have
$f(m_1,i_2) = f_C$ for all tuples $(m_1,i_2) \in \Imm$. In this way, the interpolating function $P^{(\vect{m})}_{f}$ has the form \eqref{201513121700} and is 
contained in the subspace $\Pims$. \qed
\end{proofof}

\paragraph{\bf 9.6. Proofs of Section \ref{sec:convergence}}

\begin{proofof}{Theorem \ref{th:201807231722}} By the discussion in front of Theorem \ref{th:201807231722}, we only have to show that the continuity condition (ii) in the definition of $C(\Dd)$ is satisfied. 
The interpolant $P^{(\vect{m})}_{f} \in \Pimsquare$ is of the form
\begin{equation*} \label{eq:1807221940} P^{(\vect{m})}_{f}(r,\theta) = \sum_{\vect{\gamma} \in \Gamsquare} c_{\vect{\gamma}}(f) 
\polbascont(r,\theta) = \sum_{\gamma_2 = -m_2 + 1}^{m_2} p_{\gamma_2}(r) e^{\imath \gamma_2 \theta},
\end{equation*}
with univariate polynomials $p_{\gamma_2}(r)$, $-m_2 < \gamma_2 \leq m_2$ of degree $2 m_1$. Further, the polynomials $p_{\gamma_2}(r)$ are even if $\gamma_2$ is
even and odd otherwise. Thus, for the center $r = 0$ we obtain
\begin{equation} \label{eq:1807221941} P^{(\vect{m})}_{f}(0,\theta) = \sum_{\gamma_2 \in \{-m_2 + 1, \ldots, m_2\} \atop \gamma_2 \ \text{even}} p_{\gamma_2}(0) e^{\imath \gamma_2 \theta}.
\end{equation}
In particular, $P^{(\vect{m})}_{f}(0,\theta)$ is a $\pi$-periodic trigonometric polynomial in the $m_2$ dimensional space spanned by the functions 
$\{e^{\imath \gamma_2 \theta} \ | \ \gamma_2 \in \{-m_2+1, \ldots, m_2\}, \ \gamma_2 \; \text{even}\}$. 
Moreover, we have exactly $m_2$ different points $(0,\theta_{i_2}^{(m_2)})$, $i_2 \in \{0,2,2m_2-2\}$ in $[0,\pi)$ 
at which $P^{(\vect{m})}_{f}(0,\theta)$ is equal to the constant value $f_{\mathrm{C}} = \ff(0,0)$ given at the center of $\Dd$. These $m_2$ conditions determine the trigonometric polynomial $P^{(\vect{m})}_{f}(0,\vph)$ uniquely such that $P^{(\vect{m})}_{f}(0,\theta) = f_{\mathrm{C}}$ is constant for 
$\theta \in [-\pi,\pi]$. \qed
\end{proofof}

\begin{proofof}{Theorem \ref{thm:lebesgueconstant}}
We split the spectral index set $\Gamsquare$ into the two canonical parts
\[ \Gamsquarezero 
= \left\{\vect{\gamma} \in \Gamsquare \ | \ \gamma_1, \gamma_2 \ \text{are even}\ \right\}, \quad
\Gamsquareone
= \left\{\vect{\gamma} \in \Gamsquare \ | \ \gamma_1, \gamma_2 \ \text{are odd}\ \right\}.  \]
Then, we have $\Lambda^{(\vect{m})}_{\square} \leq \Lambda^{(\vect{m})}_{\square,0} + 
\Lambda^{(\vect{m})}_{\square,1}$, where 
\[\Lambda^{(\vect{m})}_{\square,0} = \sup_{\|\ff\|_{\infty} \leq 1} \|P^{(\vect{m})}_{f}|_{\Pi^{(\vect{m})}_{\square,0}}\|_{\infty}, \quad 
\Lambda^{(\vect{m})}_{\square,1} = \sup_{\|\ff\|_{\infty} \leq 1} \|P^{(\vect{m})}_{f}|_{\Pi^{(\vect{m})}_{\square,1}}\|_{\infty},\]
and $\Pi^{(\vect{m})}_{\square,0}$ and $\Pi^{(\vect{m})}_{\square,0}$ are the subspaces of $\Pimsquare$ 
with respect to the spectral index sets $\Gamsquarezero$ and $\Gamsquareone$, respectively. As the 
estimates for $\Lambda^{(\vect{m})}_{\square,0}$ and $\Lambda^{(\vect{m})}_{\square,1}$ are 
very similar, we will restrict all upcoming considerations to the number $\Lambda^{(\vect{m})}_{\square,0}$.

We use \eqref{20170303146} and \eqref{20170304} to reformulate $P^{(\vect{m})}_{f}|_{\Pi^{(\vect{m})}_{\square,0}}$
in terms of a double sum. We get
\begin{align*} P^{(\vect{m})}_{f}|_{\Pi^{(\vect{m})}_{\square,0}}(r,\theta) &= 
\sum_{\gamma_1 = 0}^{m_1} \sum_{\gamma_2 = - \lceil m_2/2 \rceil + 1}^{\lfloor m_2/2 \rfloor} \frac{\hat{g}(2 \vect{\gamma})}{\|\chi^{(\vect{m})}_{2 \vect{\gamma}}\|_{\mathrm{w}}^2} 
T_{2 \gamma_1}(r)  \mathrm{e}^{\imath 2 \gamma_{2} \theta}.
\end{align*}
Using the reflection symmetry $\hat{g}(\vect{\gamma}) = \hat{g}(- \gamma_1 \mod 4m_1, \gamma_2)$ of $g$ on $\Jmm$, we further get
\begin{align*} P^{(\vect{m})}_{f}|_{\Pi^{(\vect{m})}_{\square,0}}(r,\theta) &= 
\sum_{\gamma_1 = -m_1}^{m_1} \sum_{\gamma_2 = - \lceil m_2/2 \rceil + 1}^{\lfloor m_2/2 \rfloor} 
(1 - \ts \frac12 \delta_{|\gamma_1|,m_1})\hat{g}(2 \vect{\gamma}) 
\mathrm{e}^{\imath (2 \gamma_1 \arccos(r) + 2 \gamma_{2} \theta)}.
\end{align*}
For $\Lambda^{(\vect{m})}_{\square,0}$ we get in this way the bound
\begin{align*} 
\Lambda^{(\vect{m})}_{\square,0} & \leq \sup_{\|\ff\|_{\infty} \leq 1} \sup_{(r,\theta)} \left| 
\sum_{\vect{i} \in \Jmm} \sum_{\gamma_1 = -m_1}^{m_1} \!\!\! (1 - \ts \frac12 \delta_{|\gamma_1|,m_1}) \!\!\!\!\!\!\!\!\! \ds \sum_{\gamma_2 = - \lceil m_2/2 \rceil + 1}^{\lfloor m_2/2 \rfloor} \!\!\!\!\!\!  g(\vect{i}) \mathrm{e}^{-\imath 2 \gamma_{1} (i_1 \pi/m_{1} - \arccos(r)) } 
\mathrm{e}^{-\imath 2 \gamma_{2} (i_2 \pi/m_{2}-\theta) }\right| \\
&\leq  \sup_{(\theta,\vph)} \frac{1}{8 m_1 m_2} \sum_{\vect{i} \in \Jmm} \left| 
\sum_{\gamma_1 = -m_1}^{m_1} \!\!\! (1 - \ts \frac12 \delta_{|\gamma_1|,m_1}) \!\!\!\!\!\!\!\!\! \ds \sum_{\gamma_2 = - \lceil m_2/2 \rceil + 1}^{\lfloor m_2/2 \rfloor}  \mathrm{e}^{-\imath 2 \gamma_{1} (i_1 \pi/m_{1} - \arccos(r)) } 
\mathrm{e}^{-\imath 2 \gamma_{2} (i_2 \pi/m_{2}-\theta) }\right|  \\
&\leq C \int_{0}^{2 \pi} \left| \sum_{\gamma_1 = -m_1}^{m_1} \!\!\! (1 - \ts \frac12 \delta_{|\gamma_1|,m_1})
  \mathrm{e}^{- \imath 2 \gamma_{1} \rho } \right| \Dx{\rho} \int_0^{2\pi} \left| 
\sum_{\gamma_2 = - \lceil m_2/2 \rceil + 1}^{\lfloor m_2/2 \rfloor} \mathrm{e}^{-\imath 2 \gamma_{2} \theta' } \right|  \Dx{\theta'}.
\end{align*}
The last transition from the two discrete sums to the continuous integrals with a constant $C >0$ independent of $\vect{m}$ is a twofold application
of a Marcinkiewicz-Zygmund inequality, see \cite[X, Theorem 7.10]{Zygmund}. 
The two univariate integrals in the last line can be considered, up to minor modifications,
as the classical univariate Lebesgue constants in the trigonometric setting \cite[II, \S 12]{Zygmund}. 
Both can be estimated in terms of a log term such that
\begin{align*} 
\Lambda^{(\vect{m})}_{\square,0} & \leq C_{\square,0} \ln (m_1+1) \ln (m_2+1).
\end{align*}
This, together with a respective very similar estimate for $\Lambda^{(\vect{m})}_{\square,1}$ gives the statement. \qed
\end{proofof}

\section{Conclusion}
In this manuscript, we derived a novel spectral interpolation scheme for the unit disk in which samples along rhodonea curves form the set of interpolation nodes. We derived three characterizations of the rhodonea nodes. In particular, the possibility
to describe these nodes as the union of two interlacing polar grids allowed us to implement the interpolation scheme in an efficient way using fast Fourier algorithms. 

The interpolation spaces are determined by a spectral index set selecting the Chebyshev-Fourier basis. While uniqueness of the interpolation scheme can be shown for a general class of interpolation spaces, the restriction to a rectangular spectral index set turned out to be advantageous for several reasons: in this case the numerical condition number is growing slowly in the number of nodes, the interpolation scheme converges fast if the interpolated function is smooth and continuity of the interpolant can be guaranteed.
This could be verified theoretically and also in a numerical experiment. Further, the interpolation scheme was applied to obtain a Clenshaw-Curtis quadrature rule on the disk.

%\bibliographystyle{acm}
%\bibliography{lisabibdisk}

\end{document}